\def\ds{\displaystyle}
\def\Hom{\mathrm{Hom}}
\def\Quad{\mathrm{Quad}}
\def\Bij{\mathrm{Bij}}
\def\Map{\mathrm{Map}}
\def\Or{\mathrm{Or}}
\def\End{\mathrm{End}}
\def\det{\mathrm{det}}
\def\norm{\mathrm{Nm}}
\def\sgn{\mathrm{sgn}}
\def\trace{\mathrm{Tr}}
\def\id{\mathrm{id}}
\def\Q{\mathbb{Q}}
\def\N{\mathbb{N}}
\def\Z{\mathbb{Z}}
\newcommand{\fundgroup}[2][]{\pi_{#2}}
\newcommand{\algs}{\operatorname{-\mathbf{Alg}}}
\newcommand{\fingsets}{\operatorname{-\mathbf{set}}}
\newcommand{\finetalgs}{\operatorname{-\mathbf{\acute{e}t}}}
\newcommand{\extpower}{\smash{\textstyle\bigwedge}}
\renewcommand{\subset}{\subseteq}
\newcommand{\ferrand}{\Phi}
\def\simto{\ds\mathop{\longrightarrow}^\sim\,}
\def\into{\hookrightarrow}
\def\onto{\twoheadrightarrow}
\newcommand{\set}[2][1]{\{#1,\dots,#2\}}
\newcommand{\fixpower}[4][]{({#2}^{\otimes_{#1} #3})^{#4}}
\newcommand{\conjugate}[2]{#1^{(#2)}}
\let\sect\S
\renewcommand\S[1]{\mathrm{S}_{#1}}
\newcommand\A[1]{\mathrm{A}_{#1}}
\newcommand\Abar[1]{\overline{\mathrm{A}}_{#1}}
\newcommand{\disc}[1]{\delta_{#1}}
\newcommand{\discalg}[1]{\Delta_{#1}}
\newcommand{\discalgs}[2]{\smash{\Delta_{#2}^{\mathrm{#1}}}}
\newcommand{\stdi}[1]{\tau_{#1}}
\def\midotimes{\bigotimes}
\newtheorem{mainthm}{Theorem}
\crefname{mainthm}{Theorem}{Theorems}
\crefname{enumi}{}{}
\newcounter{nootje}
\newtheorem{theorem}{Theorem}[section] 
\crefname{theorem}{Theorem}{Theorems}
\newtheorem{lemma}[theorem]{Lemma}
\crefname{lemma}{Lemma}{Lemmas}
\newtheorem{proposition}[theorem]{Proposition}
\crefname{proposition}{Proposition}{Propositions}
\newtheorem{corollary}[theorem]{Corollary}
\crefname{corollary}{Corollary}{Corollaries}
\theoremstyle{definition} 
\newtheorem{definition}[theorem]{Definition}
\crefname{definition}{Definition}{Definitions}
\newtheorem{remark}[theorem]{Remark}
\crefname{remark}{Remark}{Remarks}
\newtheorem{example}[theorem]{Example}
\crefname{example}{Example}{Examples}
\begin{document}

\author{Owen Biesel}
\address{Owen Biesel\\
Mathematisch Instituut\\
Niels Bohrweg 1\\
Leiden University\\
The Netherlands.}
\email{bieselod@math.leidenuniv.nl}

\author{Alberto Gioia}
\email{alberto.gioia@univie.ac.at}

\title{A new discriminant algebra construction}
\begin{abstract}
A discriminant algebra operation sends a commutative ring $R$ and an $R$-algebra $A$ of rank $n$ to an $R$-algebra $\Delta_{A/R}$ of rank $2$ with the same discriminant bilinear form. 
Constructions of discriminant algebra operations have been put forward by Rost, Deligne, and Loos. 
We present a simpler and more explicit construction that does not break down into cases based on the parity of $n$. 
We then prove properties of this construction, and compute some examples explicitly.
\end{abstract}

\subjclass[2010]{Primary 13B02; Secondary 14B25, 11R11, 13B40, 13C10}
\keywords{discriminant algebra, discriminant form, algebra of finite rank, \'etale algebra, polynomial law}
\date{March 23, 2016}

\maketitle
\tableofcontents

\section{Introduction}

An $n$-fold covering map of topological spaces $X\to S$ has an associated $2$-fold ``orientation'' covering, which on fibers is described by the set of orderings of the $n$-element fiber of $X$, up to reorderings by even permutations.
The algebraic analogue producing a rank-$2$ \'etale algebra from a rank-$n$ one is called the \emph{discriminant algebra}, and we briefly review its classical constructions below.

Let $f(x)$ in $\mathbb{Q}[x]$ be an irreducible polynomial of degree $n$. 
Suppose the Galois group of $f$ is the symmetric group $\S{n}$, and let $x_1,\ldots, x_n$ be the roots of $f$ in its splitting field $N$. 
The subfield of $N$ consisting of elements invariant under even permutations of those roots is called the \emph{discriminant field} of $L\coloneqq \Q[x]/(f(x))$. 
It is formed by adjoining a square root of the discriminant of $L$ over $K$, hence the name.

Following \cite[\sect 18]{InvolBook}, we may extend this operation to general rank-$n$ separable algebras $A$ over a field $K$ of arbitrary characteristic: such an algebra corresponds to an $n$-element $\fundgroup{K}$-set $X$, where $\fundgroup{K}$ is the absolute Galois group of $K$, and the discriminant \emph{algebra} of $A$ is the rank-$2$ separable algebra $\discalg{A/K}$ corresponding to the $2$-element set of \emph{orientations} of $X$, orderings of $X$ up to re-orderings by even permutations.
(If the characteristic of $K$ is not $2$, then the discriminant algebra of $A$ may again be presented as $K[x]/(x^2-d)$, where $d\in K$ is the discriminant of $A$ with respect to some $K$-basis; see \cite[Proposition 18.24]{InvolBook}.)
The same $\pi$-set construction applies more generally whenever $R$ is a connected ring and $A$ is a rank-$n$ projective separable $R$-algebra, since there is still a contravariant equivalence between projective separable $R$-algebras and finite sets equipped with an action by a suitable profinite group $\fundgroup{R}$.
In \cite{Wat87}, William Waterhouse drops the connectedness hypothesis by interpreting \'etale rank-$n$ algebras as $\S{n}$-torsors, and the discriminant algebra mapping from $\S{n}$-torsors to $\S{2}$-torsors is merely the one coming from the sign homomorphism $\S{n}\to\S{2}$.

Our goal in this paper is to extend the discriminant algebra construction even further to the case of $R$ a ring and $A$ a rank-$n$ projective $R$-algebra, with no separability hypothesis.
(In this paper, all rings and algebras are commutative, associative, and unital, so an $R$-algebra structure on $A$ is merely a homomorphism of commutative rings $R\to A$.)
The main feature we wish to keep is that the discriminant algebra of $A$ should have the same discriminant as that of $A$; this is easy to check in the original motivating case of a finite separable algebra $A$ over $K$ in characteristic other than $2$:  
If $A$ has discriminant $d$ with respect to some $K$-basis, then the discriminant algebra for $A$ over $K$ is $\Delta\cong K[x]/(x^2-d)$.
Since $2$ is a unit, we may as well write $\Delta\cong K[x]/(x^2-d/4)$, which, with respect to the $K$-basis $\{1,x\}$, also has discriminant
\[\det\begin{pmatrix}
 \trace_{\Delta/K}(1) & \trace_{\Delta/K}(x)\\
 \trace_{\Delta/K}(x) & \trace_{\Delta/K}(x^2)
\end{pmatrix} = \begin{pmatrix}
 2 & 0\\
 0 & d/2
\end{pmatrix} = d.\]

In this paper, we put forth such a general discriminant algebra construction.
In the setting of a ring $R$ and algebra $A$ that is locally free of rank $n$ as an $R$-module, it still makes sense to talk about the trace and discriminant maps: $A$ has a \emph{discriminant bilinear form} $\disc{A/R}$ on $\extpower^n A$ defined by
\[
\disc{A/R}(a_1\wedge\cdots\wedge a_n, b_1\wedge\cdots\wedge b_n) = \det\bigl(\trace_{A/R}(a_ib_j)\bigr)_{ij}.
\]
Here is the first main result of this paper:

\begin{mainthm}[proven as \cref{exact-sequence}]\label{mainthm-identify-discriminants}
 There is an assignment $(R,A)\mapsto \discalg{A/R}$ sending any rank-$n$ algebra $A$ over any ring $R$ to a rank-$2$ $R$-algebra $\discalg{A/R}$ that is endowed with a canonical isomorphism $\extpower^2 \discalg{A/R}\cong\extpower^n A$ identifying the discriminant bilinear form of $\discalg{A/R}$ with that of $A$.
\end{mainthm}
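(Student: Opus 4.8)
The plan is to reduce to a completely explicit situation and compute. First I would record that the construction of $\discalg{A/R}$ commutes with arbitrary base change, so that $\discalg{A\otimes_R R'/R'}\cong\discalg{A/R}\otimes_R R'$ compatibly with the identification $\extpower^n(A\otimes_R R')\cong(\extpower^n A)\otimes_R R'$; since "locally free of rank $2$" and "is an isomorphism of $R$-modules" are both local on $R$, this lets me assume $R$ is local, whence $A$ is free of rank $n$ with a basis $e=(e_1,\dots,e_n)$ and $\extpower^n A$ is free of rank $1$ on $e_1\wedge\cdots\wedge e_n$. The underlying module of $\discalg{A/R}$ — its rank, and the isomorphism $\extpower^2\discalg{A/R}\cong\extpower^n A$ — I expect to fall out of a short exact sequence of $R$-modules coming from the construction that relates $\discalg{A/R}$ to $R$ and $\extpower^n A$: for a rank-$2$ module $M$ sitting in a sequence $0\to L_1\to M\to L_2\to 0$ with $L_1,L_2$ invertible one has $\extpower^2 M\cong L_1\otimes L_2$, which here gives $\extpower^2\discalg{A/R}\cong R\otimes\extpower^n A\cong\extpower^n A$. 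That leaves the one genuinely arithmetic point: matching the two discriminant bilinear forms.

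In the free case I would pin down the distinguished element $\theta\in\discalg{A/R}$ attached to the basis $e$ — built from the $\A{n}$-invariant "Vandermonde"-type element of $A^{\otimes n}$, the one on which $\S{n}$ acts through the sign — and check that $(1,\theta)$ is an $R$-basis, so that $\discalg{A/R}$ has rank $2$ and $\extpower^2\discalg{A/R}$ is free on $1\wedge\theta$. Writing $t=\trace_{\discalg{A/R}}(\theta)$ and $N=\norm_{\discalg{A/R}}(\theta)$, the discriminant form of $\discalg{A/R}$ sends the generator to
\[
\disc{\discalg{A/R}}(1\wedge\theta,\,1\wedge\theta)=\det\mat{cc}{2 & t \\ t & t^2-2N}=t^2-4N.
\]
Declaring the isomorphism $\extpower^2\discalg{A/R}\xrightarrow{\sim}\extpower^n A$ to send $1\wedge\theta\mapsto e_1\wedge\cdots\wedge e_n$, the theorem reduces to the identity $t^2-4N=\det(\trace_{A/R}(e_ie_j))_{ij}$. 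Both sides are universal polynomials with $\Z$-coefficients in the structure constants of $A$ in the basis $e$, so it suffices to verify this once over $\Z$ for the generic rank-$n$ algebra, where it should come down to the classical formula for the discriminant of a polynomial.

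Finally I would globalize: the local isomorphisms glue because the assignment $1\wedge\theta\mapsto e_1\wedge\cdots\wedge e_n$ is basis-independent — a change of basis multiplies $e_1\wedge\cdots\wedge e_n$ by the determinant of the transition matrix, and by construction $\theta$, hence $1\wedge\theta$, scales by the same factor (the sign of the permutation part of the transition is exactly what makes this balance). The identification of forms then descends since it was verified locally. The hard part will be the universal computation of $t=\trace_{\discalg{A/R}}(\theta)$ and $N=\norm_{\discalg{A/R}}(\theta)$ — i.e.\ showing that the particular $\theta$ the construction produces genuinely carries an \emph{integral} "square root of the discriminant", with no division by $2$ — together with the bookkeeping needed to make the exact sequence, and hence the rank statement and the exterior-power isomorphism, honestly functorial in $A$.
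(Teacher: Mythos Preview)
Your structural outline matches the paper: the short exact sequence $0\to R\to\discalg{A}\to\extpower^n A\to 0$ is exactly how the rank-$2$ statement and the isomorphism $\extpower^2\discalg{A}\cong\extpower^n A$ are obtained (the paper produces it globally by base-changing $0\to\fixpower{A}{n}{\S{n}}\to\fixpower{A}{n}{\A{n}}\to\extpower^n A\to 0$ along the Ferrand map, rather than localizing first).

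The gap is in the discriminant identity. Your plan is to compute $t$ and $N$ and then verify $t^2-4N=\det(\trace_{A/R}(e_ie_j))$ by a universal argument that ``should come down to the classical formula for the discriminant of a polynomial''. That reduction does not go through as stated: the generic rank-$n$ algebra is not monogenic for $n\geq 3$, so there is no single polynomial to invoke, and neither $t$ nor $N$ individually has a clean closed form in the structure constants. What you are missing is that the whole identity already holds in $A^{\otimes n}$ \emph{before} applying the Ferrand homomorphism. Writing $\conjugate{a}{j}$ for $a$ in the $j$th tensor slot, one has literally
\[
\gamma^{\A{n}}(a)-\gamma^{\Abar{n}}(a)=\det\bigl(\conjugate{a_i}{j}\bigr)_{ij},
\]
so the product of two such expressions is $\det\bigl(\sum_j \conjugate{a_i}{j}\conjugate{b_k}{j}\bigr)=\det\bigl(e_1(a_ib_k)\bigr)$; the Ferrand map sends $e_1\mapsto\trace_{A/R}$, giving $\det(\trace_{A/R}(a_ib_k))$ directly. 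Coupling this with the observation that the standard involution on $\discalg{A}$ is induced by any transposition of tensor factors (so $\bar\theta=\dot\gamma^{\Abar{n}}(e)$, hence $t^2-4N=(\theta-\bar\theta)^2$), the identity $t^2-4N=\disc_A$ drops out in one line, globally, with no universal computation and no localization needed.
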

We call $\discalg{A/R}$ the \emph{discriminant algebra} of $A$ over $R$.
In a 2005 letter to Manjul Bhargava and Markus Rost, Pierre Deligne suggested a list of four other properties a discriminant algebra operation $(R,A)\mapsto \discalg{A/R}$ should have; see \cite{DeligneLett} for the original formulation. 
We prove that these properties hold for our construction with the following theorems; the first property is that forming the discriminant algebra should commute with base change:

\begin{mainthm}[proven as \cref{base-change}]\label{mainthm-base-change}
 If $R$ is a ring and $A$ is an $R$-algebra of rank $n$, and $R'$ is any $R$-algebra, let $A'$ denote $R'\otimes_R A$, an $R'$-algebra of rank $n$.  
 Then there is a canonical isomorphism $R'\otimes_R \discalg{A/R}\cong 
\discalg{A'/R'}$.
\end{mainthm}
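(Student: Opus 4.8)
The plan is to produce the natural comparison map and show it is bijective by passing through the construction of $\discalg{A/R}$ one operation at a time. Since $A\mapsto\discalg{A/R}$ is a functor in the $R$-algebra $A$, the structure map $A\to A'$, $a\mapsto 1\otimes a$, induces an $R$-algebra homomorphism $\discalg{A/R}\to\discalg{A'/R'}$ (the target viewed as an $R$-algebra by restriction of scalars along $R\to R'$), and by the universal property of $R'\otimes_R(-)$ this is the same datum as a canonical $R'$-algebra homomorphism $\theta\colon R'\otimes_R\discalg{A/R}\to\discalg{A'/R'}$; the content of the theorem is that $\theta$ is an isomorphism. By \cref{mainthm-identify-discriminants} the source and target of $\theta$ are finitely generated projective $R'$-modules of rank $2$, so $\theta$ is an isomorphism as soon as it becomes one after localizing $R'$ at each prime — a reduction I will fall back on below.

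Next I would recall the precise shape of the construction of $\discalg{A/R}$ and observe that it is built from $A$ by operations of two kinds. On one side are the right-exact ones — finite tensor powers, finite direct sums, exterior powers $\extpower^k$, cokernels of explicitly given $R$-linear maps, and quotients by explicitly generated submodules — each of which commutes with the (possibly non-flat) functor $R'\otimes_R(-)$ with no hypothesis, because that functor is right exact and, for $\extpower^k$, because exterior powers always commute with base change. On the other side are the left-exact ingredients arising from the trace and norm polynomial laws — the symmetric-tensor or divided-power functors $\mathrm{TS}^k$ and $\Gamma^k$ — which I expect to occur only applied to $A$ itself and its tensor powers; since $A$ is locally free of rank $n$ those modules are finitely generated projective, and $\mathrm{TS}^k$ and $\Gamma^k$ of a finitely generated projective module commute with arbitrary base change, as one sees by reducing Zariski-locally on $\operatorname{Spec}R$ to the case of a free module, where these functors yield free modules with bases visibly preserved by base change (this is essentially the mechanism underlying Roby's and Ferrand's norm constructions). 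Finally the multiplication, the unit, and the identification $\extpower^2\discalg{A/R}\cong\extpower^n A$ of \cref{mainthm-identify-discriminants} are given by explicit universal formulas, which $R'\otimes_R(-)$ carries to the corresponding formulas for $A'$. Threading these compatibilities through the construction in order then shows that $\theta$ is an isomorphism of $R'$-modules, and the compatibility of the structure formulas upgrades this to an isomorphism of $R'$-algebras, compatible with the identifications of \cref{mainthm-identify-discriminants}.

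I expect essentially all of the work to lie in that last step: one must traverse the construction of the earlier sections and confirm that it consists only of operations of these two kinds — in particular that no stage quietly forms a kernel, an intersection, or a submodule of non-locally-constant rank, and that $\mathrm{TS}^k$ and $\Gamma^k$ are never applied to a module that has ceased to be finitely generated projective. Should some stage resist this analysis, the reduction noted in the first paragraph is available: localizing $R'$ at a prime $\mathfrak p$ and $R$ at its contraction, and using the (already-established) flat-base-change case to compare the localizations, one reduces to the situation where $A$ is free over $R$; there every module occurring in the construction is free with a canonical basis built functorially from a basis of $A$, those canonical bases are taken to one another by $R'\otimes_R(-)$, and $\theta$ is visibly the identity between the two constructions written in their distinguished bases. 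Everything else — notably that $\theta$ respects the multiplication and unit — is formal, those structures being defined by polynomial laws natural in $(R,A)$.
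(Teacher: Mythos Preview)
Your construction of the comparison map in the first paragraph rests on a false premise: $A\mapsto\discalg{A/R}$ is \emph{not} a functor on arbitrary $R$-algebra homomorphisms. As \cref{section-functoriality} shows, discriminant algebras are functorial only for \emph{universally norm-preserving} maps between rank-$n$ $R$-algebras, and $A'=R'\otimes_R A$ is typically not even a rank-$n$ $R$-algebra. Even granting functoriality over $R$, it would at best produce a map to ``$\discalg{A'/R}$'' (whatever that might mean), not to $\discalg{A'/R'}$. The comparison map does exist, but for a different reason: it comes directly from the natural $R'$-algebra map $R'\otimes_R\fixpower{A}{n}{\A{n}}\to\fixpower[R']{A'}{n}{\A{n}}$, together with the fact---built into \cref{definition-ferrand}---that the Ferrand homomorphism commutes with base change.

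Once the comparison map is correctly set up, your second and third paragraphs are in spirit what the paper does, only far more circuitously. The paper's entire proof is two sentences: since $\discalg{A/R}=\fixpower{A}{n}{\A{n}}\otimes_{\fixpower{A}{n}{\S{n}}}R$, it suffices that (i) the natural map $R'\otimes_R\fixpower{A}{n}{G}\to\fixpower[R']{A'}{n}{G}$ is an isomorphism for each $G\subset\S{n}$, and (ii) the Ferrand homomorphism commutes with base change. Point (i) is exactly \cref{fixpower-commutes-with-base-change}, proved by localizing to the free case and matching the explicit bases $\{\gamma^\alpha(\theta)\}$; point (ii) is immediate from the universal property in \cref{definition-ferrand}. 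There is no need to catalogue ``right-exact versus left-exact'' operations, no $\Gamma^k$ beyond what is already absorbed into the definition of $\ferrand_{A/R}$, and no fallback to localizing $R'$ at primes---the construction has exactly three ingredients (two $G$-invariant tensor powers and one pushout along $\ferrand_{A/R}$), and each visibly base-changes.
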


 Deligne's second requirement is an explicit description of the discriminant algebra in the case that $2$ is invertible, and is a generalization of the idea of adjoining to the base ring a square root of the discriminant of the algebra with respect to some basis. 
 
\begin{mainthm}\label{mainthm-2-unit}
 If $2$ is a unit in $R$ and $A$ is an $R$-algebra of rank $n$, then there is an $R$-algebra isomorphism $\discalg{A/R}\simto R\oplus \extpower^nA$, where the latter is given the $R$-algebra structure whose identity element is $(1,0)$, and where the product of two elements $a,b\in\extpower^n A$ is via the discriminant form: $a\cdot b \coloneqq \disc{A/R}(a,b)\in R$.
\end{mainthm}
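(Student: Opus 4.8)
The plan is to reduce the statement to the elementary structure theory of rank-$2$ algebras over a ring in which $2$ is invertible, and then feed in the canonical isomorphism supplied by \cref{mainthm-identify-discriminants}. The structural input I would record first is this: if $2\in R^\times$ and $B$ is any $R$-algebra of rank $2$, then $b\mapsto\tfrac12\trace_{B/R}(b)$ is an $R$-linear retraction of the inclusion $R\into B$ (because $\trace_{B/R}(1)=2$), so $B = R\cdot 1\oplus B^{0}$ with $B^{0}:=\ker(\trace_{B/R})$ a direct summand, hence invertible of rank $1$. Applying the Cayley--Hamilton theorem to multiplication by $b\in B^{0}$ gives $b^{2}=\trace_{B/R}(b)\,b-\norm_{B/R}(b)=-\norm_{B/R}(b)\in R$, and polarizing (again using $\tfrac12\in R$) yields $B^{0}\cdot B^{0}\subseteq R$. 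Thus multiplication on $B$ restricts to a symmetric $R$-bilinear form $\mu_{B}\colon B^{0}\times B^{0}\to R$, and $B$ is recovered as $R\oplus B^{0}$ with the ``multiplication into the scalars'' given by $\mu_{B}$.

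The second step is the computation that pins $\mu_{B}$ down in terms of the discriminant. Under the canonical isomorphism $\extpower^{2}B=\extpower^{2}(R\oplus B^{0})\cong B^{0}$ sending $1\wedge b^{0}$ to $b^{0}$, the defining formula for $\disc{B/R}$ together with $\trace_{B/R}(b^{0})=0$ for $b^{0}\in B^{0}$ and $\trace_{B/R}(c)=2c$ for $c\in R\subseteq B$ gives $\disc{B/R}(1\wedge b^{0},\,1\wedge c^{0})=4\,\mu_{B}(b^{0},c^{0})$. In words: transported onto $\extpower^{2}B$, the multiplication of a rank-$2$ algebra with $2$ invertible is exactly $\tfrac14$ times its discriminant form.

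Finally I would take $B=\discalg{A/R}$, which has rank $2$ by \cref{mainthm-identify-discriminants}. Composing the isomorphism of the previous paragraph with the canonical isomorphism $\extpower^{2}\discalg{A/R}\simto\extpower^{n}A$ of \cref{mainthm-identify-discriminants} — which by construction carries $\disc{\discalg{A/R}/R}$ to $\disc{A/R}$ — identifies $(\discalg{A/R})^{0}$ with $\extpower^{n}A$ carrying the form $\tfrac14\disc{A/R}$, so $\discalg{A/R}\cong R\oplus\bigl(\extpower^{n}A,\,\tfrac14\disc{A/R}\bigr)$ as $R$-algebras. To conclude, note that scaling by $\tfrac12$ (an $R$-module automorphism of $\extpower^{n}A$, as $2\in R^\times$) defines an $R$-algebra isomorphism $R\oplus\bigl(\extpower^{n}A,\,\tfrac14\disc{A/R}\bigr)\simto R\oplus\bigl(\extpower^{n}A,\,\disc{A/R}\bigr)$, the identity on $R$ and $a\mapsto\tfrac12 a$ on the second factor; composing gives the asserted isomorphism. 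One may, if desired, perform the trace computations after base changing to a local or to a universal ring via \cref{mainthm-base-change}, but since all the maps involved are canonical this detour is unnecessary.

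I expect the main point of care to be the normalization bookkeeping rather than any serious difficulty: the factor $4$ thrown off by the discriminant computation has to be cancelled by the rescaling by $\tfrac12$, and it is precisely the ``$x^{2}-\tfrac14 D$'' normalization flagged in the introduction and built into the canonical isomorphism of \cref{mainthm-identify-discriminants}. The proof must invoke that specific isomorphism — not a naively normalized variant — or the constants will fail to match.
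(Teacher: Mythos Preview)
Your proposal is correct and follows essentially the same route as the paper's proof (given as \cref{discalg-when-2-is-invertible}): split off $R$ using $\tfrac12\trace$, show via Cayley--Hamilton that traceless elements multiply into $R$, compute $\disc{}(1\wedge d,1\wedge e)=4de$, and then invoke the discriminant-identifying isomorphism of \cref{mainthm-identify-discriminants} to land on $\tfrac14\disc{A/R}$, finally rescaling by $\tfrac12$ to remove the $\tfrac14$. The only cosmetic difference is that the paper works directly with the map $f$ from the short exact sequence of \cref{exact-sequence} rather than first naming the traceless summand $B^{0}$ and then identifying it with $\extpower^{2}B$, but these are the same data.
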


We will find that it actually makes more sense to include a factor of $1/4$ in the multiplication, so that the product of two elements $a,b\in\extpower^n A$ is $\frac14\disc{A/R}(a,b)$.
(This is analogous to presenting the discriminant algebra of a separable $K$-algebra with discriminant $d$ as $K[x]/(x^2-d/4)$ instead of $K[x]/(x^2-d)$.)
Then \cref{mainthm-2-unit} is actually a consequence of \cref{mainthm-identify-discriminants}; see \cref{discalg-when-2-is-invertible}.

Deligne's third requirement is that in the case of $R$ connected and $A$ \'etale over $R$, the discriminant algebra for $A$ over $R$ should be the one described at the beginning of the introduction:

\begin{mainthm}[proven as \cref{discriminant-algebra-corresponds-to-orientations}]\label{mainthm-etale}
 Let $R$ be a connected ring and $A$ an \'etale $R$-algebra of rank $n$ corresponding to a $\fundgroup{R}$-set $X$. 
 Then $\discalg{A/R}$ is also \'etale, and corresponds to the $2$-element set $\Or(X)$ of orientations of $X$.
\end{mainthm}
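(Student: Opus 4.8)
The plan is to handle étaleness first and then to identify the $\fundgroup{R}$-set associated to $\discalg{A/R}$ by reducing, via base change and faithfully flat descent, to an explicit computation for the split algebra $\Z^n$.

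For étaleness: it is a standard fact that a finite locally free $R$-algebra is étale precisely when its discriminant bilinear form is a perfect pairing. Since $A$ is étale this holds for $A$, and by \cref{mainthm-identify-discriminants} the discriminant bilinear form of $\discalg{A/R}$ is identified with that of $A$, hence is also perfect, while $\discalg{A/R}$ is locally free of rank $2$. So $\discalg{A/R}$ is an étale $R$-algebra of rank $2$, and since $R$ is connected it corresponds to some $2$-element $\fundgroup{R}$-set $Y$; it remains to produce an isomorphism of $\fundgroup{R}$-sets $Y\cong\Or(X)$.

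Next I would settle the split case over $\Z$, namely $A=\Z^n$ with $\S{n}$ acting by permuting coordinates. By the previous step $\discalg{\Z^n/\Z}$ is étale of rank $2$ over $\Z$, and since $\mathrm{Spec}\,\Z$ is simply connected, $\discalg{\Z^n/\Z}\cong\Z\times\Z$. Functoriality of the construction turns the $\S{n}$-action on $\Z^n$ into an $\S{n}$-action on $\Z\times\Z$, i.e.\ a homomorphism $\phi\colon\S{n}\to\mathrm{Aut}_\Z(\Z\times\Z)=\S{2}$. Applying the functor $\extpower^2$, and using that $\extpower^2$ of the swap of $\Z\times\Z$ is multiplication by $-1$ while $\extpower^2$ of the identity is the identity, the induced $\S{n}$-action on $\extpower^2(\Z\times\Z)\cong\Z$ is nontrivial if and only if $\phi$ is. On the other hand, the canonical — hence $\S{n}$-equivariant — isomorphism $\extpower^2\discalg{\Z^n/\Z}\cong\extpower^n\Z^n$ of \cref{mainthm-identify-discriminants} identifies this action with the sign action of $\S{n}$ on $\extpower^n\Z^n\cong\Z$, which is nontrivial since $n\ge2$. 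Hence $\phi$ is nontrivial, so $\phi=\sgn$ (the unique nontrivial homomorphism $\S{n}\to\S{2}$ for $n\ge2$), and $\discalg{\Z^n/\Z}\cong\Z\times\Z$ with $\S{n}$ acting through $\sgn$. Finally, for general connected $R$ and étale $A$ corresponding to $X$, choose a connected finite Galois covering $R\to R'$ with $\bar G=\mathrm{Gal}(R'/R)$ a quotient of $\fundgroup{R}$ that trivializes $A$ (take $R'$ corresponding to the quotient of $\fundgroup{R}$ by the kernel of its action on $X$), so that $A':=R'\otimes_R A\cong(R')^n=R'\otimes_\Z\Z^n$ and the descent datum on $A'$ is the one determined by the action homomorphism $\bar G\to\S{n}$ describing the $\bar G$-action on $X$. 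By \cref{mainthm-base-change}, applied over $R'$ and (for the cocycle condition) over $R'\otimes_R R'$, the algebra $\discalg{A/R}$ is the descent of $\discalg{A'/R'}=R'\otimes_\Z\discalg{\Z^n/\Z}\cong R'\times R'$ along the descent datum induced by functoriality, which by the split computation is the one associated to $\bar G\to\S{n}\xrightarrow{\sgn}\S{2}$. Thus $Y$ is the $2$-element set on which $\fundgroup{R}$ acts through $\fundgroup{R}\onto\bar G\to\S{n}\xrightarrow{\sgn}\S{2}$, which is exactly $\Or(X)=\Bij(\set{n},X)/\A{n}$: the $\fundgroup{R}$-action on $\Or(X)$ is the one induced from $X$, with a transposition interchanging the two $\A{n}$-orbits of bijections $\set{n}\to X$ and an even permutation fixing them. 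This gives $Y\cong\Or(X)$.

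I expect the only real obstacle to be the middle step over $\Z$: there $2$ is not invertible, so \cref{mainthm-2-unit} and the description $R\oplus\extpower^n A$ are unavailable, and one must instead extract $\discalg{\Z^n/\Z}\cong\Z\times\Z$ with its sign action from simple-connectedness of $\mathrm{Spec}\,\Z$ together with the equivariance in \cref{mainthm-identify-discriminants}. Routing through $\Z$ rather than through a geometric point — whose residue field might have characteristic $2$ — is precisely what avoids having to re-examine the construction in characteristic $2$. (Alternatively, $\discalg{\Z^n/\Z}\cong\Z\times\Z$ with the sign action can be read off directly from the explicit construction.)
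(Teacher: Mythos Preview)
Your proof is correct but follows a genuinely different route from the paper's. The paper argues directly from the definition of $\discalg{A}$ as a tensor product: since $A$ is \'etale, so are $A^{\otimes n}$ and its $\S{n}$- and $\A{n}$-invariants, and the defining pushout square lives in the category of finite \'etale $R$-algebras; applying the equivalence $F$ turns it into a fiber product of $\fundgroup{R}$-sets. The key input is \cref{Ferrand-map-maps-to-bijections}, identifying $F(\ferrand_A)$ with the map $\{\ast\}\to X^n/\S{n}$ hitting the orbit of bijections, so the pullback is $\Bij(\set{n},X)/\A{n}=\Or(X)$. Your argument instead avoids interpreting the Ferrand map altogether: you pin down the universal case $\discalg{\Z^n/\Z}$ using Minkowski's theorem (simple connectedness of $\mathrm{Spec}\,\Z$) together with the $\S{n}$-equivariance of the isomorphism $\extpower^2\discalg{}\cong\extpower^n$ from \cref{mainthm-identify-discriminants}, and then descend. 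The paper's approach is shorter and more transparently tied to the construction, but yours has the virtue of using only the \emph{output} properties (\cref{mainthm-identify-discriminants,mainthm-base-change} and functoriality under isomorphisms) rather than the internal structure of the Ferrand homomorphism---so it would apply verbatim to any discriminant algebra operation satisfying those theorems, not just this particular construction.
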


Deligne's fourth requirement was that the two descriptions of $\discalg{A/R}$ from \cref{mainthm-etale,mainthm-2-unit} should be compatible when both apply, and we omit the statement here.
In his letter, Deligne sketched a construction of an operation $(R,A)\mapsto \discalgs{Del}{A/R}$ satisfying his four properties.
He first reduced to a similar list of properties that a discriminant algebra operation for quadratic forms should have, and extended the corresponding \'etale and $2$-invertible descriptions across their codimension-2 complement in a universal case. 
This construction only works for odd-rank algebras, so he defined the discriminant algebra of an even-rank $A$ to be that of $R\times A$. 
However, he does not specifically equip $\discalgs{Del}{A/R}$ with the structure of a discriminant-identifying isomorphism $\extpower^2\discalgs{Del}{A/R}\cong \extpower^n A$.

 Earlier, Markus Rost had exhibited in \cite{Rost} a way of producing a natural rank-$2$ algebra $K_{A/R}$ from a rank-$3$ $R$-algebra $A$, but it admits no natural discriminant-identifying isomorphism $\extpower^2 K_{A/R}\cong \extpower^3 A$.
 Rost fixed this defect by the process of Verschiebung or ``shifting'': modifying the multiplication of $K_{A/R}$ to obtain a new quadratic algebra $\discalgs{Rost}{A/R}$ with the desired discriminant form.
 Ottmar Loos later employed this shifting technique to give a discriminant algebra construction in \cite{LoosDiscAlg} for even-rank algebras, extending to the odd-rank case again by defining $\discalgs{Loos}{A/R}\coloneqq\discalgs{Loos}{(R\times A)/R}$ when $A$ has odd rank. 
 
 The main advantages of our discriminant algebra construction $(R,A)\mapsto \discalg{A/R}$ are that it does not appeal to a separate construction for quadratic forms, it does not split into cases based on whether the rank of $A$ is odd or even, and it does not require first handling special cases (such as $2$ being invertible or 
the algebra being \'etale).
 
  Namely, given a ring $R$ and an $R$-algebra $A$ of rank $n$, there is a canonical $R$-algebra homomorphism from the $\S{n}$-invariant tensors of $A^{\otimes n}$ to $R$; 
 we call this the \emph{Ferrand homomorphism} $\ferrand_{A/R}\colon\fixpower{A}{n}{\S{n}}\to R$.  
 Then we use the Ferrand homomorphism to define the discriminant algebra of $A$ over $R$ as the tensor product 
 \[
 \discalg{A/R} \coloneqq \fixpower{A}{n}{\A{n}}\midotimes_{\mathclap{\fixpower{A}{n}{\S{n}}}} R.
 \]
 
In \cref{section-ferrand}, we define the Ferrand homomorphism and discriminant algebra associated to a rank-$n$ algebra.
We prove \cref{mainthm-identify-discriminants} in \cref{section-discalg}, and prove \cref{mainthm-base-change,mainthm-2-unit,mainthm-etale} in \cref{section-discalg-props}.
In between, in \cref{section-understanding} we discuss the action of the Ferrand homomorphism, and \cref{section-examples} exhibits some examples of discriminant algebras.
\Cref{section-functoriality} shows that the construction of the discriminant algebra is functorial with respect to algebra homomorphisms that preserve the characteristic polynomial of every element.

In \cref{section-products}, we analyze the discriminant algebra of a product algebra; 
our main result is that $\discalg{(A\times B)/R}\cong \discalg{A/R}\ast\discalg{B/R}$, where $\ast$ is the commutative monoid structure on the set of isomorphism classes of quadratic algebras, characterized by John Voight in \cite{Voight15}.
We also show that $\discalg{(R\times A)/R}\cong \discalg{A/R}$, so that if we set $\discalg{A/R}\coloneqq R^2$ for $A$ of rank $0$ or $1$, we obtain a monoid homomorphism from the set of isomorphism classes of (locally) constant-rank algebras to the set of isomorphism classes of quadratic algebras, carrying $\times$ to $\ast$.

In forthcoming work, the authors exhibit isomorphisms between this discriminant algebra construction and those given by Rost and Loos.
The isomorphism with Loos's discriminant algebra is quite subtle, and suggests that there may be only one discriminant algebra operation satisfying \cref{mainthm-identify-discriminants,mainthm-base-change,mainthm-2-unit,mainthm-etale} up to isomorphism.
Due to the existence of suitably nice universal cases, the authors have been able to verify this uniqueness in ranks up to $3$, but the general case is still unknown.
  
Both authors would like to thank Lenny Taelman for several helpful discussions throughout this project, as well as Darij Grinberg and the anonymous referee for their comments and corrections.
The second author would also like to thank the Austrian Science Fund FWF for its support through project P25652.

\section{Defining the Ferrand homomorphism and discriminant algebra}\label{section-ferrand}

Given a ring $R$ with algebra $A$ of rank $n$, Daniel Ferrand in \cite{FonctNorme} uses a certain homomorphism $\ferrand_{A/R}\colon \fixpower{A}{n}{\S{n}}\to R$ to construct a functor from $A$-modules to $R$-modules.
We call this homomorphism the \emph{Ferrand homomorphism}. 

In case $K$ is a field and $L$ is a degree-$n$ separable field extension of $K$, then the Ferrand homomorphism $\ferrand_{L/K}$ has a simple description, given the Galois closure $N$ of $L$ over $K$: 
Compile the $n$ homomorphisms $L\to N$ over $K$ into a single homomorphism $L^{\otimes n}\to N$, and restrict it to the subalgebra $\fixpower{L}{n}{\S{n}}$. 
The image in $N$ of any $\S{n}$-invariant tensor will be invariant under the Galois action on $N$, and thus belongs to $K$, giving us the Ferrand homomorphism $\ferrand_{L/K}\colon \fixpower{L}{n}{\S{n}}\to K$.

In this section, we will briefly review Ferrand's abstract definition of the Ferrand homomorphism in \cite{FonctNorme}, and then we will describe how to compute its action on symmetric tensors.

First, we recall the setting in which elements of algebras have norms. 
Thus an algebra over a ring $R$ is just another ring $A$ with a ring homomorphism $R\to A$.

\begin{definition}
Let $R$ be a ring, and let $M$ be an $R$-module. 
We say that $M$ is \emph{locally free (of rank $n$)} if there are elements $r_1,\dots,r_k\in R$, together generating the unit ideal, such that $M_{r_i}$ is free (of rank $n$) as an $R_{r_i}$-module for each $i\in\set{k}$. 
An $R$-algebra $A$ is said to be \emph{of rank $n$} if $A$ is locally free of rank $n$ as an $R$-module.
\end{definition}

\begin{remark}
Equivalently, an $R$-module $M$ is locally free of rank $n$ if and only if it is projective and finitely generated (i.e.\ flat and finitely presented) and the rank of each (necessarily free) $R_{\mathfrak{p}}$-module $M_{\mathfrak{p}}$ is $n$ for each prime ideal $\mathfrak{p}\in M$; see \cite[Thm.\ 2 on p.\ II.141]{BourbakiAlgComm14}.
\end{remark}

The following observation about locally free algebras and modules will be useful throughout this paper:
\begin{lemma}
 Let $R$ be a ring and $A$ an $R$-algebra of rank $n\geq 2$.
 Then the quotient $A/R$ is a locally-free $R$-module of rank $n-1$, and the natural map
 \[\extpower^{n-1}(A/R) \to \extpower^n A\]
 sending $[a_1]\wedge\dots\wedge[a_{n-1}]$ to $1\wedge a_1\wedge\dots\wedge a_{n-1}$ is an isomorphism.
\end{lemma}

In particular, if $A$ is a quadratic (i.e.\ rank-$2$) $R$-algebra then there is a canonical isomorphism $A/R \to \extpower^2 A$ sending the class of $a$ to $1\wedge a$.

\begin{proof}
 For the first claim, that $A/R$ is locally free, it is enough to show that $A/R$ is flat and finitely presented.
 First, the structure morphism $R\to A$ is injective since the rank of $A$ is everywhere positive, and since the same applies after base change to any $R$-algebra, we find that $R\to A$ is the inclusion of a pure submodule.
 The quotient of a flat module by a pure submodule is also flat, so the quotient $A/R$ is a flat $R$-module.
 It is also finitely presented, since $A$ is, so $A/R$ is locally free.
 
 The claims that $A/R$ has rank $n-1$ and that the given map on exterior powers is an isomorphism can be checked locally, so assume $R$ is a local ring.
 Then $A/R$ is a free $R$-module, so let $\{[\theta_1],\dots,[\theta_k]\}$ be an $R$-basis for it.
 Then $A$ is has $R$-basis $\{1, \theta_1,\dots,\theta_k\}$, so since $A$ has rank $n$ we must have $k=n-1$ as desired.
 Finally, note that the indicated homomorphism $\extpower^{n-1}(A/R)\to \extpower^n A$ maps the singleton basis $\{[\theta_1]\wedge\dots\wedge[\theta_{n-1}]\}$ to the basis $\{1\wedge\theta_1\wedge\dots\wedge\theta_{n-1}\}$, so is an isomorphism.
\end{proof}

\begin{definition}
Let $R$ be a ring, and let $A$ be an $R$-algebra of rank $n$.
For each element $a\in A$, multiplication by $a$ is an $R$-module homomorphism $A\to A$, and the $n$th exterior power of this homomorphism is an $R$-module homomorphism $\extpower^n A\to \extpower^n A$. 
Since $\extpower^n A$ is locally free of rank $1$, this endomorphism is equal to multiplication by a unique element of $R$, called the \emph{norm} $\norm_{A/R}(a)$ of $a$.
\end{definition}

What sort of map is $\norm_{A/R}\colon A\to R$?
It is multiplicative, but almost never additive, and thus it is not a ring homomorphism.
It is, however, the base component of a \emph{multiplicative homogeneous degree-$n$ polynomial law}:

\begin{definition}
 Let $R$ be a ring and $M$ and $N$ two $R$-modules.
 A \emph{polynomial law} $p\colon M\to N$ is a collection of functions $p_S\colon S\otimes_R M\to S\otimes_R N$ for each $R$-algebra $S$, such that for every $R$-algebra homomorphism $f\colon S\to S'$ the following square of functions commutes:
 \[\begin{tikzcd}
  S\otimes_R M \arrow{r}{p_S}\arrow{d}[swap]{f\otimes \id_M} & S\otimes_R N \arrow{d}{f\otimes \id_N}\\
  S'\otimes_R M \arrow{r}{p_{S'}} & S'\otimes_R N
 \end{tikzcd}\]
 We say that $p$ is \emph{homogeneous of degree $n$} if for every $R$-algebra $S$, element $s\in S$ and element $m\in S\otimes M$, we have $p_S(sm) = s^n p_S(m)$.
 If $p\colon A\to B$ is a polynomial law between two $R$-algebras $A$ and $B$, we say $p$ is \emph{multiplicative} if each $p_S\colon S\otimes_R A \to S\otimes_R B$ is a multiplicative function.
\end{definition}

For example, if $A$ is an $R$-algebra, then the diagonal function $A\to A^{\otimes n}$ sending $a$ to $a\otimes\dots\otimes a$ extends naturally to a multiplicative homogeneous degree-$n$ polynomial law $A\to A^{\otimes n}$.

A fundamental result of the theory of polynomial laws, developed by Norbert Roby in \cite{PolMaps}, is that for each $R$-module $M$ there is a universal homogeneous degree-$n$ polynomial law $\gamma^n\colon M\to \Gamma^n_R(M)$;
every homogeneous degree-$n$ polynomial law $M\to N$ factors uniquely as $\gamma^n$ followed by an ordinary $R$-module homomorphism $\Gamma^n_R(M)\to N$.
Furthermore, Roby shows in \cite{MultHomMaps} that if $A$ is an $R$-algebra, then $\Gamma^n_R(A)$ is also an $R$-algebra and \emph{multiplicative} degree-$n$ polynomial laws $A\to B$ correspond to $R$-\emph{algebra} homomorphisms $\Gamma^n_R(A)\to B$.
Thus the norm map $\norm_{A/R}\colon A\to R$ corresponds to an $R$-algebra homomorphism $\Gamma^n_R(A)\to R$.

A general presentation for $\Gamma_R(M)$ may be found in \cite[III.1]{PolMaps}, but for our purposes it is sufficient to note that the diagonal polynomial law $A\to A^{\otimes n}$ gives us an $R$-algebra homomorphism $\Gamma^n_R(A)\to A^{\otimes n}$, and that if $A$ is a flat $R$-algebra then this homomorphism restricts to an isomorphism with the subalgebra of $\S{n}$-invariant tensors (see \cite[5.5.2.5 on p.\ 123]{deligne1973cohomologie}):
\[
\Gamma^n_R(A)\simto \fixpower{A}{n}{\S{n}}: \gamma^n(a)\mapsto a\otimes\dots\otimes a.
\]
Thus if $A$ and $B$ are $R$-algebras with $A$ flat, multiplicative homogeneous degree-$n$ polynomial laws $A\to B$ correspond to $R$-algebra homomorphisms $\fixpower{A}{n}{\S{n}}\to B$.


\begin{definition}[{c.f.\ \cite[3.1.2]{FonctNorme}}]\label{definition-ferrand}
Let $R$ be a ring and $A$ an $R$-algebra of rank $n$.
Then the norm polynomial law $\norm_{A/R}\colon A\to R$ corresponds to an $R$-algebra homomorphism 
\begin{align*}
 \ferrand_{A/R}\colon \fixpower{A}{n}{\S{n}}\cong \Gamma^n_R(A) \to R,
\end{align*}
called \emph{the Ferrand homomorphism} (of $A$ over $R$).
It is the unique $R$-algebra homomorphism such that for every $R$-algebra $R'$ and every element $a\in A'\coloneqq R'\otimes_R A$, the composite
\begin{equation}\label{ferrand-base-change}
\begin{tikzcd}[column sep=small]
\fixpower[R']{A'}{n}{\S{n}}\arrow{r}{\sim} & R'\otimes_R \fixpower{A}{n}{\S{n}}\arrow{rrrr}{\id_{R'}\otimes \ferrand_{A/R}} & & & & R'\otimes_R R \arrow{r}{\sim} & R'
\end{tikzcd}
\end{equation}
sends $a\otimes\dots\otimes a$ to $\norm_{A'/R'}(a)\in R'$.
\end{definition}

As an immediate corollary to this definition, we find for each $R$-algebra $R'$ and $A'\coloneqq R'\otimes_R A$ that the composite \eqref{ferrand-base-change} is the Ferrand homomorphism for $A'$ over $R'$.
We say that the Ferrand homomorphism \emph{commutes with base change}.

\begin{definition}
Let $R$ be a ring and let $A$ be an $R$-algebra of rank $n$ with $n\geq 2$. Then the {\em discriminant algebra $\discalg{A/R}$ of $A$ over $R$} is the tensor product of $\fixpower{A}{n}{\S{n}}$-algebras
\[
\discalg{A/R} = \fixpower{A}{n}{\A{n}} \midotimes_{\fixpower{A}{n}{\S{n}}} R 
\]
defined by the inclusion $\fixpower{A}{n}{\S{n}} \into \fixpower{A}{n}{\A{n}}$ and the Ferrand homomorphism $\ferrand_{A/R}\colon \fixpower{A}{n}{\S{n}}\to R$.
\end{definition}

If the base ring $R$ is understood, it may be omitted and the discriminant algebra of $A$ denoted $\discalg{A}$. 
We will adopt similar conventions for the trace and norm maps, the Ferrand homomorphism, and the discriminant bilinear form.

\begin{remark}
 Note that since the Ferrand homomorphism $\ferrand_{A/R}$ is surjective, so is the homomorphism from $\fixpower{A}{n}{\A{n}}\to\discalg{A/R}$.
 Thus $\discalg{A/R}$ can be understood as the quotient of $\fixpower{A}{n}{\A{n}}$ by the ideal generated by elements of the form $x-\ferrand_{A/R}(x)$ for $x\in\fixpower{A}{n}{\S{n}}$.
 If $x$ is a general element of $\fixpower{A}{n}{\A{n}}$, we will often denote its image in $\discalg{A/R}$ by $\dot x$.
\end{remark}

\begin{remark}
 Note that if we replace $\fixpower{A}{n}{\A{n}}$ by $A^{\otimes n}$ in the definition of $\discalg{A/R}$, we obtain what Ferrand denotes by $\smash{\mathbf{P}^{(1,\dots,1)}(A)}$ in \cite[5.2]{FonctNorme} and what Bhargava calls the \emph{$\S{n}$-closure of $A$ over $R$} in \cite{BhargSat}.
 
 Denoting the $\S{n}$-closure by $B$, we thus obtain an $R$-algebra homomorphism $\discalg{A/R}\to B$ by tensoring the inclusion $\fixpower{A}{n}{\A{n}}\into A^{\otimes n}$ along $\ferrand_{A/R}$.
 In case $R\to A$ is a degree-$n$ separable field extension $K\into L$ in characteristic other than $2$, whose normal closure $N$ has Galois group $\S{n}$, then $B\cong N$ and this homomorphism $\discalg{A/R}\to B$ is just the inclusion of the discriminant field of $L$ into $N$. 
\end{remark}

\section{Understanding the Ferrand homomorphism}
\label{section-understanding}

The two defining facts of the Ferrand homomorphisms are that they send elements of the form $a\otimes\dots\otimes a$ to the norm of $a$, and that they commute with base change.
Our primary tool for computing the image of an arbitrary symmetric tensor, then, is to identify it as a term in some tensor power of a single element, and then correspondingly break up the norm of that single element.

For example, if $A$ is any $R$-algebra, and $a\in A$, then we have the following elements of $\fixpower{A}{n}{\S{n}}$:
\begin{align*}
 e_1(a) &\coloneqq  (a\otimes 1\otimes \dots\otimes 1) + (1\otimes a\otimes\dots\otimes 1) + \ldots + (1\otimes\dots\otimes 1\otimes a)
\\
 e_2(a) &\coloneqq  (a\otimes a\otimes 1\otimes\dots\otimes 1) + (a\otimes 1 \otimes a \otimes \dots \otimes 1) + \ldots\\
 &\qquad \ldots + (1\otimes \dots\otimes 1 \otimes a \otimes a)\\
 \ldots & \\
 e_n(a) &\coloneqq  (a\otimes a\otimes\dots\otimes a).
\end{align*}
These are the elementary symmetric polynomials in the $n$ elements $\conjugate{a}{1},\dots,\conjugate{a}{n}\in A^{\otimes n}$, where $\conjugate{a}{i}$ is the element
\begin{equation}\label{conjugate}
  \conjugate{a}{i} = 1\otimes\dots\otimes1\otimes a\otimes 1\otimes\dots\otimes 1
 \end{equation}
with the $a$ in the $i$th tensor factor.

In case $A$ is a rank-$n$ $R$-algebra, we can compute the image of $e_k(a)$ under $\ferrand_{A/R}\colon \fixpower{A}{n}{\S{n}}\to R$ as follows:
First, note that $e_k(a)$ is the coefficient of $\lambda^{n-k}\mu^{k}$ in $(\lambda+\mu a)\otimes\dots\otimes (\lambda+\mu a)$, as an element of $\fixpower[{R[\lambda,\mu]}]{A[\lambda,\mu]}{n}{\S{n}}$.
(Thus we sometimes also write $e_0(a) = 1$.)
The image of this element under $\ferrand_{A[\lambda,\mu]/R[\lambda,\mu]}$ is its norm $f(\lambda,\mu)\in R[\lambda,\mu]$.
Since the norm map is homogeneous of degree $n$, we find that $f(\lambda,\mu)$ is a homogeneous polynomial of degree $n$; denoting by $s_k(a)$ the coefficient of $\lambda^{n-k}\mu^k$ in $f(\lambda,\mu)$, we obtain
\[\ferrand_{A[\lambda,\mu]/R[\lambda,\mu]}\colon \sum_{k=0}^n e_k(a)\lambda^{n-k}\mu^k \mapsto \sum_{k=0}^n s_k(a) \lambda^{n-k}\mu^k.\]
Since $\ferrand_{A[\lambda,\mu]/R[\lambda,\mu]}$ is just $\ferrand_{A/R}\otimes \id_{R[\lambda,\mu]}$, we thus have
\[\ferrand_{A/R}\colon e_k(a) \mapsto s_k(a)\]
for each $k\in\set[0]{n}$.

What are these quantities $s_k(a)\in R$?
The answer comes by setting $\mu=-1$, so that
\[\sum_{k=0}^n s_k(a)\lambda^{n-k}(-1)^k = \norm_{A[\lambda]/R[\lambda]}(\lambda-a),\]
the characteristic polynomial of $a$.
We can thus read off $s_k(a)$ as $(-1)^k$ times the coefficient of $\lambda^{n-k}$ in the characteristic polynomial of $a$.
In particular, $s_n(a)$ is the norm $\norm_{A/R}(a)$ of $a$, and $s_1(a)$ is its trace $\trace_{A/R}(a)$.

We summarize the above discussion in the following lemma:

\begin{lemma}\label{ek-sent-to-sk}
 Let $R$ be a ring and $A$ an $R$-algebra with an element $a\in A$, and let $k\in\set[0]{n}$.
 Denote by $e_k(a)$ the $k$th elementary symmetric polynomial in the $n$ elements $\conjugate{a}{1},\dots,\conjugate{a}{n}\in A^{\otimes n}$ defined in \eqref{conjugate}.
 Now suppose $A$ is a rank-$n$ $R$-algebra, and let $s_k(a)$ be $(-1)^k$ times the coefficient of $\lambda^{n-k}$ in the characteristic polynomial of $a$.
 Then $\ferrand_{A/R}\bigl(e_k(a)\bigr) = s_k(a)$.
\end{lemma}

We can similarly compute the image under $\ferrand_{A/R}$ of other elements of $\fixpower{A}{n}{\S{n}}$.

\begin{definition}[{c.f.\ \cite[2.2.3.1]{FonctNorme}}]\label{definition-gamma-alpha}
Let $a = (a_i)_{i\in I} \in A^I$ be a tuple of elements of $A$, and let $\alpha\subset I^n$.
We define an element $\gamma^{\alpha}(a)\in A^{\otimes n}$ by
\[
\gamma^\alpha(a) \coloneqq  \sum_{(i_1,\dots,i_n)\in \alpha}a_{i_1}\otimes\dots \otimes a_{i_n}.
\]
If $G$ is a subgroup of $\S{n}$ and $\alpha$ is invariant under the action of $G$ on $I^n$, then $\gamma^\alpha(a)\in\fixpower{A}{n}{G}$.
For example, if $\alpha$ is the $\S{n}$-orbit of $(i_1,\dots,i_n)\in I^n$, then $\gamma^\alpha(a)$ is the coefficient of $\lambda_{i_1}\dots\lambda_{i_n}$ in $(\sum_{i\in I}\lambda_ia_i)\otimes\dots\otimes(\sum_{i\in I}\lambda_ia_i)$.
In this case $\ferrand_{A/R}(\gamma^\alpha(a))$ is the coefficient of $\lambda_1\dots\lambda_n$ in the norm of $\sum_{i\in I}\lambda_i a_i$, as an element of the rank-$n$ $R[\lambda_i:i\in I]$-algebra $A[\lambda_i:i\in I]$.
The case of $\ferrand_{A/R}(e_k(a)) = s_k(a)$ is recovered by taking $I=\{1,2\}$ and $(a_1,a_2) = (1,a)$.

In general, we will write the set of $G$-orbits of $I^n$ as $I^n/G$, and write $\alpha\in I^n/G$ to mean that $\alpha$ is one such $G$-orbit.
\end{definition}

We will often consider the case that the tuple $I\to A$ is an ordinary $n$-tuple $a = (a_1,\dots,a_n)\in A^n$, and that the subset $\alpha\subset \set{n}^n = \Map(\set{n},\set{n})$ is a subgroup $G\subset\S{n}$, or a coset $\sigma G$ of $G$.
(The most common cases are $G=\S{n}$ or $\A{n}$, or the coset of odd permutations $\Abar{n}$.)
The quantity $\gamma^{\sigma G}(a_1,\dots,a_n)$ is linear in each $a_i$, since each $a_i$ appears exactly once per term as one of the tensor factors.

\begin{example}\label{ferrand-example}
 Consider the case $R=\Z$ and $A=R[x]/(x^2+x+2)$, a quadratic $R$-algebra.
 If we set $a_1 = -2x+1$ and $a_2=3x+2$, then the element $a_1\otimes a_2+a_2\otimes a_1$ is an $\S{2}$-invariant element of $A^{\otimes 2}$---what is its image under the Ferrand homomorphism $\ferrand_{A/R}\colon\fixpower{A}{2}{\S{2}}\to R$? 
 We will calculate the answer three different ways to demonstrate the various methods we will use throughout the paper.
 
 First, and simplest, note that since $x\in A$ satisfies both the defining equation $x^2 + x + 2 = 0$ as well as its characteristic polynomial $x^2 - \trace_A(x) + \norm_A(x) = 0$, we find that these two equations are equal because $1$ and $x$ are $R$-linearly independent elements of $A$.
 Therefore $\trace_A(x) = -1$ and $\norm_A(x)=2$.
 We can then compute the image of $a_1\otimes a_2 + a_2\otimes a_1$ by expanding it in terms of these quantities:
 \begin{align*}
  a_1\otimes a_2 + a_2\otimes a_1 &= (-2x+1)\otimes(3x+2) + (3x+2)\otimes(-2x+1)\\
  &= -6(x\otimes x) -4(x\otimes 1) + 3(1\otimes x) + 2(1\otimes 1)\\
  &\quad -6(x\otimes x) +3(x\otimes 1) - 4(1\otimes x) + 2(1\otimes 1)\\
  &= -12(x\otimes x) - 1(x\otimes 1 + 1\otimes x) + 4\\
  &= -12\cdot e_2(x) - 1\cdot e_1(x) + 4\\
  &\mapsto -12\cdot s_2(x) - 1\cdot s_1(x) + 4\\
  &= -12\cdot\norm_{A}(x) - 1\cdot\trace_{A}(x) + 4\\
  &= -12\cdot 2 - 1\cdot -1 + 4 = -19.
 \end{align*}

We can also determine the image of $a_1\otimes a_2 + a_2\otimes a_1$ knowing only how multiplying by $a_1$ and $a_2$ act with respect to an $R$-basis of $A$.
Choosing the $R$-basis $\{1,x\}$, we find that they act by the matrices
\[a_1:\begin{pmatrix}
 1 & 4\\
 -2 & 3
\end{pmatrix}\qquad
 a_2:\begin{pmatrix}
 2 & -6\\
 3 & -1
\end{pmatrix}.
\]
(For example, $a_2\cdot x = (3x+2)(x) = 3x^2 + 2x = 3(-x-2) + 2x = -6-x$, whose coefficients are found in the second column of the $a_2$ matrix.)
Now $a_1\otimes a_2+a_2\otimes a_1$ is the quantity denoted $\gamma^{\S{2}}(a_1,a_2)$, which is the coefficient of $\lambda_1\lambda_2$ in $(\lambda_1a_1 + \lambda_2a_2)\otimes (\lambda_1 a_1+ \lambda_2 a_2)$.
Thus its image under the Ferrand homomorphism is the coefficient of $\lambda_1\lambda_2$ in the norm of $\lambda_1 a_1+\lambda_2 a_2$ (as an element of the quadratic $R[\lambda_1,\lambda_2]$-algebra $A[\lambda_1,\lambda_2]$).
This element acts by the matrix
\[\lambda_1a_1 + \lambda_2 a_2 : \begin{pmatrix}
 \lambda_1+2\lambda_2 & 4\lambda_1-6\lambda_2\\
 -2\lambda_1+3\lambda_2 & 3\lambda_1-\lambda_2
\end{pmatrix}.\]
Ignoring all but the $\lambda_1\lambda_2$ terms, this determinant is
\[(\lambda_1)(-\lambda_2) + (2\lambda_2)(3\lambda_1) - (4\lambda_1)(3\lambda_2) - (-6\lambda_2)(-2\lambda_1) = -19\lambda_1\lambda_2,\]
so $\ferrand_{A/R}(a_1\otimes a_2 + a_2\otimes a_1) = -19$, as before.

Finally, note that we can write $a_1\otimes a_2 + a_2\otimes a_1$ as a polynomial in symmetric tensors of the form $e_k(f)$:
\begin{align*}
 a_1\otimes a_2 + a_2\otimes a_1 &= (a_1\otimes 1 + 1\otimes a_1)(1\otimes a_2 + a_2\otimes 1)\\
 &\qquad - (a_1a_2\otimes 1 + 1\otimes a_1a_2)\\
 &= e_1(a_1)e_1(a_2) - e_1(a_1a_2)\\
 &\mapsto s_1(a_1)s_1(a_2) - s_1(a_1a_2)\\
 &= \trace_A(a_1)\trace_A(a_2) - \trace_A(a_1a_2).
\end{align*}
 We can read off $\trace_A(a_1)$ and $\trace_B(a_2)$ as the traces of the above matrices, namely $1+3 = 4$ and $2-1 = 1$.
 The trace of $a_1a_2$ can by computed similarly as the trace of the product matrix:
 \[a_1a_2:\begin{pmatrix}
 1 & 4\\
 -2 & 3
\end{pmatrix}\begin{pmatrix}
 2 & -6\\
 3 & -1
\end{pmatrix} = \begin{pmatrix}
 14 & -10\\
 5 & 9
\end{pmatrix},\]
so $\trace_A(a_1a_2) = 14+9 = 23$.
Therefore $\ferrand_A(a_1\otimes a_2+a_2\otimes a_1) = (4)(1) - (23) = -19$.
\end{example}

The latter two methods for computing $\ferrand_{A/R}$ of an $\S{n}$-invariant tensor---expanding the tensor as a linear combination of the $\gamma^\alpha(a)$ or a polynomial in the $e_k(a)$---will be used throughout this paper.
The remainder of this section consists of technically useful results on sets of module and algebra generators for the $G$-invariants of tensor powers, which, in effect, say that the methods of \cref{ferrand-example} will always be sufficient to calculate the image of an $\S{n}$-invariant tensor under the Ferrand homomorphism.

%

\begin{lemma}\label{generators-for-G-invariants}
Let $R$ be a ring and $M$ a projective $R$-module.
Let $\theta = (\theta_i)_{i\in I} \in M^I$ be a generating family (resp.\ $R$-basis) for $M$. 
Then for each natural number $d$ and subgroup $G\subset\S{d}$, the family $\{\gamma^\alpha(\theta): \alpha\in I^d/G\}$ is a generating family (resp.\ $R$-basis) for $\fixpower{M}{d}{G}$.
\end{lemma}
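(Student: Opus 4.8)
The plan is to bootstrap from the free case to the general projective case, using the splitting afforded by the projectivity hypothesis. First I would treat the case in which $\theta$ is an $R$-basis, so that $M\cong R^{(I)}$ is free. Then $M^{\otimes d}$ is free with basis the pure tensors $\theta_{i_1}\otimes\dots\otimes\theta_{i_d}$ indexed by $I^d$, and $G\subset\S{d}$ acts on this module by permuting these basis vectors along the induced action of $G$ on $I^d$ (whose orbits are finite, since $G$ is). For a permutation module of this type one checks by comparing coefficients that $\sum_w c_w(\theta_{w_1}\otimes\dots\otimes\theta_{w_d})$ is $G$-invariant if and only if $w\mapsto c_w$ is constant on $G$-orbits; collecting basis vectors orbit by orbit then exhibits $\fixpower{M}{d}{G}$ as the free $R$-module on $\{\gamma^\alpha(\theta):\alpha\in I^d/G\}$, linear independence being clear because distinct orbits involve disjoint basis vectors. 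This argument is valid over an arbitrary commutative ring and settles the ``basis'' assertion.

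Next, for a general projective $M$ with generating family $\theta$, I would set $F=R^{(I)}$, free with standard basis $(e_i)_{i\in I}$, and let $\pi\colon F\onto M$ be the surjection $e_i\mapsto\theta_i$ (surjective because $\theta$ generates $M$). Projectivity of $M$ provides an $R$-linear section $\iota\colon M\to F$ with $\pi\circ\iota=\id$. Passing to $d$-fold tensor powers, $\pi^{\otimes d}\colon F^{\otimes d}\to M^{\otimes d}$ is $G$-equivariant (it commutes with permuting the tensor factors), and $\iota^{\otimes d}\colon M^{\otimes d}\to F^{\otimes d}$ is a $G$-equivariant section of it, since $\pi^{\otimes d}\circ\iota^{\otimes d}=(\pi\circ\iota)^{\otimes d}=\id$. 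A split surjection of $R[G]$-modules stays surjective after passing to $G$-invariants, so $\pi^{\otimes d}$ restricts to a surjection $\fixpower{F}{d}{G}\onto\fixpower{M}{d}{G}$. By the first step $\fixpower{F}{d}{G}$ is spanned by the elements $\gamma^\alpha(e)$, and $\pi^{\otimes d}(\gamma^\alpha(e))=\gamma^\alpha(\theta)$ directly from the definition of $\gamma^\alpha$; hence $\{\gamma^\alpha(\theta):\alpha\in I^d/G\}$ spans $\fixpower{M}{d}{G}$. When $\theta$ is already a basis, $\pi$ is an isomorphism and $\pi^{\otimes d}$ restricts to an isomorphism on $G$-invariants carrying one basis to the other, recovering the first step.

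The one point requiring care --- and the reason the projectivity hypothesis appears --- is that taking $G$-invariants is not exact, so mere surjectivity of $\pi^{\otimes d}$ would not transfer a spanning set; the $G$-equivariant section $\iota^{\otimes d}$ is exactly what repairs this, and I expect this to be the crux of the proof. The remaining ingredients, namely the coefficient computation for permutation modules and the bookkeeping with the finite $G$-orbits on $I^d$, are routine.
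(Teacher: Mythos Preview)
Your proposal is correct and follows essentially the same strategy as the paper: first handle the free case by an orbit-by-orbit coefficient computation, then in the projective case choose a splitting of $R^{(I)}\onto M$ and use it to show that the induced map on $G$-invariants is still surjective. The paper phrases the second step slightly more abstractly---observing that a left inverse is preserved by any functor, in particular by $\fixpower{(\cdot)}{d}{G}$---whereas you spell out the $G$-equivariance of $\iota^{\otimes d}$ explicitly, but the content is identical.
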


Recall that by $\alpha\in I^d/G$ we mean that $\alpha$ is an orbit of $I^d$ under the action of $G$, so that the notation $\gamma^\alpha(\theta)$ makes sense.

\begin{proof}
 First consider the case in which $\theta$ is an $R$-basis for $M$.  Then every tensor in $M^{\otimes d}$ can be uniquely represented as an $R$-linear combination of pure tensors of the form $\theta_{i_1}\otimes\dots\otimes \theta_{i_d}$.  
Such a linear combination is $G$-invariant precisely if the coefficients are constant across $G$-orbits of the pure tensors, i.e.\ if the tensor is an $R$-linear combination of the $\gamma^\alpha(\theta)$ as $\alpha$ 
ranges over $I^d/G$.  
Hence the $\gamma^\alpha(\theta)$ generate $\fixpower{M}{d}{G}$.  
Furthermore, since no pure tensor is a term in more than one of the $\gamma^\alpha(\theta)$, they are also $R$-linearly independent, and hence form an $R$-basis for $\fixpower{M}{d}{G}$.
 
 Second, suppose that $M$ is merely projective and that $\{\theta_i:i\in I\}$ merely generates $M$ as an $R$-module.  
 Then let $R^{(I)}$ be the free $R$-module with basis $e=(e_i)_{i\in I}$, and $R^{(I)}\onto M$ be the surjection sending $e_i$ 
to $\theta_i$.  
Since $M$ is projective, this surjection has a right inverse, which is a property preserved by any functor.  
Hence after applying the functor $\fixpower{(\cdot)}{d}{G}$, we obtain another surjection $\fixpower{(R^{\smash{(I)}})}{d}{G}\onto\fixpower{M}{d}{G}$.  
Since $\{\gamma^\alpha(e): \alpha\in I^d/G\}$ generates $\fixpower{(R^{\smash{(I)}})}{d}{G}$ by the above, its image $\{\gamma^\alpha(\theta): \alpha\in I^d/G\}$ generates $\fixpower{M}{d}{G}$ as well.
\end{proof}

Thus given a set of $R$-module generators $\theta=(\theta_i)_{i\in I}$ of a rank-$n$ $R$-algebra $A$, we find that every element of $\fixpower{A}{n}{\S{n}}$ (resp.\ $\fixpower{A}{n}{\A{n}}$) can be written as an $R$-linear combination of the $\gamma^\alpha(\theta)$, as $\alpha$ ranges over all $\S{n}$-orbits (resp.\ $\A{n}$-orbits) of $I^n$.
This fact has a number of consequences that we explore in the remainder of this section.

\begin{proposition}\label{fixpower-commutes-with-base-change}
Let $R$ be a ring, let $M$ be a locally free $R$-module, let $d$ be a natural number, and let $G$ be a subgroup of $\S{d}$. 
Let $R^\prime$ be any $R$-algebra, and denote by $M'$ the locally free $R'$-module $R'\otimes_R M$. 
Then the natural $R'$-module homomorphism $R'\otimes_R \fixpower{M}{d}{G}\to \fixpower[R']{M'}{d}{G}$ is an isomorphism.
\end{proposition}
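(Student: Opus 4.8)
The plan is to reduce to the case where $M$ is free, and then to use \cref{generators-for-G-invariants} to exhibit compatible explicit bases on the two sides of the comparison map, making it visibly an isomorphism. The point worth keeping in mind throughout is that $R'$ is an \emph{arbitrary} $R$-algebra, not assumed flat, so a soft ``flat base change'' argument is unavailable: one really has to compute.

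First, the reduction to the free case. Since $M$ is locally free, choose $r_1,\dots,r_k\in R$ generating the unit ideal with each $M_{r_i}$ free over $R_{r_i}$. Forming $\fixpower{M}{d}{G}$ commutes with localization: it is the intersection $\bigcap_{g\in G}\ker(g-\id)$ of finitely many kernels inside $M^{\otimes d}$, hence the kernel of a single map $M^{\otimes d}\to\prod_{g\in G}M^{\otimes d}$ into a finite product, and localization is exact and commutes with finite products. Likewise $R'\otimes_R\fixpower{M}{d}{G}$ and $\fixpower[R']{M'}{d}{G}$ localize correctly at the $r_i$, and the natural comparison map is compatible with all of this. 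Since the images of $r_1,\dots,r_k$ in $R'$ generate the unit ideal of $R'$, it suffices to check that the comparison map becomes an isomorphism after inverting each $r_i$; so we may assume $R$ is replaced by some $R_{r_i}$ and $M$ by $M_{r_i}$, i.e.\ that $M$ is free. (Alternatively one can avoid localization altogether: writing $M$ as the image of a projector $p\colon F\to F$ on a free module $F$, the map $p^{\otimes d}$ is a $G$-equivariant idempotent on $F^{\otimes d}$, the functor $\fixpower{(\cdot)}{d}{G}$ carries it to an idempotent on $\fixpower{F}{d}{G}$ whose image is $\fixpower{M}{d}{G}$, and $R'\otimes_R(-)$ preserves images of idempotents; so the free case implies the general one.)

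So assume $M$ is free with basis $\theta=(\theta_i)_{i\in I}$. Then $M'$ is free over $R'$ with basis $\theta'=(1\otimes\theta_i)_{i\in I}$. By \cref{generators-for-G-invariants}, $\{\gamma^\alpha(\theta):\alpha\in I^d/G\}$ is an $R$-basis of $\fixpower{M}{d}{G}$, so $\{1\otimes\gamma^\alpha(\theta)\}$ is an $R'$-basis of $R'\otimes_R\fixpower{M}{d}{G}$; and by the same lemma $\{\gamma^\alpha(\theta'):\alpha\in I^d/G\}$ is an $R'$-basis of $\fixpower[R']{M'}{d}{G}$. Because the natural map is $R'$-linear and sends the pure tensor $1\otimes(\theta_{i_1}\otimes\cdots\otimes\theta_{i_d})$ to $\theta'_{i_1}\otimes\cdots\otimes\theta'_{i_d}$, it sends $1\otimes\gamma^\alpha(\theta)$ to $\gamma^\alpha(\theta')$; thus it carries an $R'$-basis bijectively onto an $R'$-basis and is therefore an isomorphism. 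The only genuinely nontrivial ingredient is the reduction to the free case; everything after that is bookkeeping with \cref{generators-for-G-invariants}.
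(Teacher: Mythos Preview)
Your proof is correct and follows essentially the same approach as the paper: both reduce to the free case via the observation that taking $G$-invariants commutes with localization, and then invoke \cref{generators-for-G-invariants} to match the $R'$-basis $\{1\otimes\gamma^\alpha(\theta)\}$ with the $R'$-basis $\{\gamma^\alpha(\theta')\}$. Your parenthetical idempotent argument is a pleasant alternative reduction not present in the paper, but the main line is identical.
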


\begin{proof}
 Since $\fixpower{M}{d}{G}$ can be written as an equalizer of finitely many maps $M^{\otimes d}\to M^{\otimes d}$, the functor $\fixpower{(\cdot)}{d}{G}$ commutes with localization (see, for example, \cite[Prop.\ A7.1.3]{Katz-Mazur}). 
 We therefore only need to check that the given homomorphism is an isomorphism in the case that $M$ is free.  
 Say $M$ has $R$-basis $\theta=(\theta_i)_{i\in I}$, so that $M'$ has a free $R'$-basis $\theta'=(1\otimes\theta_i)_{i\in I}$. 
Then the canonical map $R'\otimes_R \fixpower{M}{d}{G} \to \fixpower[R']{M'}{d}{G}$ carries the $R'$-basis $\{1\otimes \gamma^\alpha(\theta):\alpha\in I^d/G\}$ to the $R'$-basis $\{\gamma^\alpha(\theta'):\alpha\in I^d/G\}$, so is an isomorphism.
\end{proof}

Next we show that elements of the form $e_k(a)$ generate $\fixpower{A}{n}{\S{n}}$ as an algebra.
The proof is algorithmic in nature, and shows us how to write any element of the form $\gamma^{\alpha}(a)$ as a polynomial in the $e_k(a)$.

\begin{lemma}\label{ek-generators}
 Let $R$ be a ring and $A$ a rank-$n$ $R$-algebra.
 Let $\Omega\subset A$ be a set of elements of $A$ whose powers together generate $A$ as an $R$-module.
 Then the ring $\fixpower{A}{n}{\S{n}}$ is generated as an $R$-algebra by  $\{e_k(a): k\in\set{n}\text{ and }a\in\Omega\}$.
\end{lemma}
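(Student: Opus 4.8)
The plan is to combine \cref{generators-for-G-invariants} with an inductive ``peeling'' that rewrites symmetrized tensors built from powers of $\Omega$ in terms of the $e_k(\omega)$. Write $B'\subseteq\fixpower{A}{n}{\S{n}}$ for the $R$-subalgebra generated by $\{e_k(\omega):\omega\in\Omega,\ k\in\set{n}\}$. Since $A$ is projective, \cref{generators-for-G-invariants} applied to the generating family $\theta=(\omega^{j})_{\omega\in\Omega,\,j\ge0}$ of $A$ shows that $\fixpower{A}{n}{\S{n}}$ is spanned as an $R$-module by the orbit sums $\gamma^{\alpha}(\theta)$, i.e.\ by the symmetrized tensors $\sum_{(i_1,\dots,i_n)\in\alpha}b_{i_1}\otimes\dots\otimes b_{i_n}$ with the $b_i$ powers of elements of $\Omega$. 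So it suffices to put every such orbit sum into $B'$.

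Two elementary facts will be used repeatedly. First, $e_1$ is $R$-linear, and $e_1(a^{j})=\sum_{l=1}^{n}(\conjugate{a}{l})^{j}$ is the $j$-th power sum of $\conjugate{a}{1},\dots,\conjugate{a}{n}$, hence a $\Z$-polynomial in $e_1(a),\dots,e_n(a)$ by Newton's identities; thus $e_1(\omega^{j})\in B'$ for all $\omega\in\Omega$ and $j\ge0$, and therefore $e_1(a)\in B'$ for \emph{every} $a\in A$ --- in particular for every monomial in the elements of $\Omega$. Second, for $\omega\in\Omega$ the element $e_k(\omega^{j})$ is a symmetric polynomial with integer coefficients in $\conjugate{\omega}{1},\dots,\conjugate{\omega}{n}$, so by the fundamental theorem of symmetric polynomials over $\Z$ it is a $\Z$-polynomial in $e_1(\omega),\dots,e_n(\omega)$; hence $e_k(\omega^{j})\in B'$ for all such $\omega,j,k$.

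Now the main step. Given an orbit sum as above, group its entries by value: let $v_1,\dots,v_q$ be the distinct nontrivial values (each a power of an element of $\Omega$), with multiplicities $\mu_1,\dots,\mu_q$. Then $\prod_{s}e_{\mu_s}(v_s)\in B'$; writing $e_{\mu_s}(v_s)=\sum_{\lvert T_s\rvert=\mu_s}\prod_{l\in T_s}\conjugate{(v_s)}{l}$ and multiplying out, the contributions of the \emph{pairwise disjoint} families $(T_1,\dots,T_q)$ reassemble exactly into our orbit sum, while every other contribution has some tensor factor occupied by a product of two or more of the $v_s$. Those other contributions sum to an $\S{n}$-invariant element, hence (again by \cref{generators-for-G-invariants}) a $\Z$-linear combination of orbit sums each with strictly fewer nontrivial tensor factors and with entries that are monomials in the elements of $\Omega$. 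One recurses: entries that are powers of a single element of $\Omega$ fall under the second fact, and for an entry $w$ that is a proper monomial in several elements of $\Omega$ one uses the hypothesis to write $w$ as an $R$-linear combination of powers of elements of $\Omega$ and then expands $e_k(w)$ via the identity presenting $e_k$ of a sum $X_1+\dots+X_r$ as a $\Z$-polynomial in the $e_i(X_t)$, their products, and the $e_j$'s with $j<k$ of the mixed monomials in the $X_t$; this subsidiary recursion on $k$ bottoms out at $e_1$ and at $e_j$ of a single power of an element of $\Omega$. The outer recursion on the number of nontrivial tensor factors bottoms out at one such factor, where the orbit sum is $e_1$ of a monomial in $\Omega$. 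This places every orbit sum, and hence all of $\fixpower{A}{n}{\S{n}}$, inside $B'$.

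The step I expect to be the real obstacle is the verification of the two combinatorial identities underlying these recursions: that $\prod_s e_{\mu_s}(v_s)$, once the ``disjoint'' terms are split off, equals the given orbit sum plus an \emph{integral} combination of strictly simpler orbit sums, and that $e_k$ of a sum decomposes integrally as stated with each mixed-monomial contribution of strictly smaller symmetric-function degree. Both are instances of the multivariable fundamental theorem of symmetric functions; the delicate point is to keep every reduction integral --- over a general base ring one cannot recover elementary symmetric tensors from power sums --- so that the peeling strictly decreases a genuine complexity measure while never dividing by $n!$. Granting these identities, the rest is the bookkeeping sketched above.
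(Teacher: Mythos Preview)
Your overall strategy---peel off the product $\prod_s e_{\mu_s}(v_s)$, observe that the ``disjoint'' terms give back the orbit sum, and recurse on the number of nontrivial tensor factors---is exactly the paper's. The difference is that the paper makes a one-line simplification at the outset that eliminates your inner recursion entirely: since each $e_k(\omega^m)$ is a $\Z$-polynomial in $e_1(\omega),\dots,e_n(\omega)$ by the fundamental theorem of symmetric polynomials, one may enlarge $\Omega$ to contain all powers of its elements (and $1$) without changing the subalgebra $B'$. After this enlargement $\Omega$ is itself an $R$-module generating set containing $1$, so \cref{generators-for-G-invariants} already gives module generators $\gamma^\alpha(\Omega)$ with every entry in $\Omega$. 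Now $\prod_{a\in\Omega\setminus\{1\}} e_{\alpha(a)}(a)$ is visibly in $B'$, and its difference from $\gamma^\alpha(\Omega)$, once any products occurring in overlap positions are re-expanded as $R$-linear combinations of elements of $\Omega$, is an $R$-combination of $\gamma^\beta(\Omega)$'s with strictly more $1$-entries. A single induction on the number of non-$1$ entries finishes.

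Your version, by not enlarging $\Omega$, forces you to handle entries that are arbitrary monomials in $\Omega$, and hence to argue separately that $e_k(w)\in B'$ for general $w\in A$. The identity you invoke for this---``$e_k$ of a sum $X_1+\dots+X_r$ as a $\Z$-polynomial in the $e_i(X_t)$ and the $e_j$'s with $j<k$ of mixed monomials''---is true, but it is not a free-standing fact: when you expand $e_k(\sum X_t)$, the correction terms are general orbit sums (e.g.\ the orbit of $(X,X,Y,1,\dots,1)$), not themselves $e_j$'s of anything, and reducing \emph{them} to $e_j$'s of monomials is precisely another instance of your outer recursion. So the subsidiary recursion on $k$ is not independent of the main one; they are the same induction, and presenting them as separate leaves the argument looking circular. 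The paper's trick of enlarging $\Omega$ is exactly what collapses the two recursions into one.
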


 In particular, the Ferrand homomorphism $\ferrand_{A/R}\colon\fixpower{A}{n}{\S{n}}\to R$ is the unique $R$-algebra homomorphism sending $e_k(a)\mapsto s_k(a)$ for all $k\in\set{n}$ and $a\in \Omega$.
(Note: in case $\Omega$ is the set of primitive monomials of a set of algebra generators for $A$, \cref{ek-generators} is a corollary of \cite[Theorem 4.10]{Vac08} by an argument similar to the proof of \cref{generators-for-G-invariants}.)

\begin{proof}
 First note that for each $m\in\N$ and $a\in \Omega$, we can express each $e_k(a^m)$ as a polynomial in $\{e_j(a):j\in\set{n}\}$ by the fundamental theorem of symmetric polynomials.
 Therefore by adding to $\Omega$ all powers of its elements, we can assume without loss of generality that $\Omega$ is a set of $R$-module generators for $A$ that contains $1$.
 Therefore the set $\{\gamma^\alpha(\Omega):\alpha\in\Omega^n/\S{n}\}$ generates $\fixpower{A}{n}{\S{n}}$ as an $R$-module, where we regard $\Omega$ as a family of elements of $A$ indexed by itself.
 
 Next we show by induction on $k$ that if $\alpha\in \Omega^n/\S{n}$ is the $\S{n}$-orbit of any tuple with at most $k$ entries not equal to $1$, then $\gamma^{\alpha}(\Omega)$ is in the subalgebra of $\fixpower{A}{n}{\S{n}}$ generated by $\{e_j(a):a\in\Omega\text{ and }j\in\set{k}\}$.
 The case $k=n$ then implies that the subalgebra generated by all the $e_k(a)$ contains a set of $R$-module generators for $\fixpower{A}{n}{\S{n}}$, and thus is all of $\fixpower{A}{n}{\S{n}}$.
 
 The base case $k=0$ is clear: if $\alpha = \{(1,\dots,1)\}$, then $\gamma^\alpha(\Omega) = 1$ is in every subalgebra of $\fixpower{A}{n}{\S{n}}$.
 
 Now assume that the hypothesis holds for all $j<k$, and consider an element of the form $\gamma^{\alpha}(\Omega)$ where $\alpha$ is the $\S{n}$-orbit of a tuple in $\Omega^n$ with at most $k$ components unequal to $1$.
 Given $a\in\Omega$, let $\alpha(a)$ be the number of times $a$ appears in any single tuple in $\alpha$; thus $\alpha(1)\geq n-k$ and $\sum_{a\in\Omega\setminus\{1\}}\alpha(a) \leq k$.
 Construct the element 
 \[p=\prod_{a\in\Omega\setminus\{1\}}e_{\alpha(a)}(a)\in \fixpower{A}{n}{\S{n}};\]
  since $\alpha(a)=0$ (and thus $e_{\alpha(a)}(a) = 1$) for all but finitely many $a\in\Omega$, this product is well-defined.
 Rewrite the product $p$ as follows: first simplify the product and collect terms related by permutations to write $p$ as a sum of terms of the form $\gamma^\beta(A)$, one of which is the original $\gamma^\alpha(\Omega)$.
 For the remainder of the terms, choose an expression of each tensor factor as an $R$-linear combination of elements of $\Omega$, taking care to choose the same expression each time the same element of $A$ appears and to leave each tensor factor $1$ alone. 
 After collecting like terms again, we have written $p$ as $\gamma^\alpha(\Omega)$ plus an $R$-linear combination of terms of the form $\gamma^\beta(\Omega)$.
 These other $\beta$ all have the property that $\beta(1)>n-k$, so by the induction hypothesis $\gamma^\alpha(a)-p$ is in the algebra generated by $\{e_j(a):a\in\Omega\text{ and }j<k\}$.
 Therefore $\gamma^\alpha(a)$ is in the subalgebra generated by $\{e_j(a):a\in\Omega\text{ and }j\leq k\}$, as desired.
\end{proof}

For example, given a tuple $a = (a_1,\dots,a_n)$ of elements of an algebra $A$, we have the following elements of $A^{\otimes n}$:
\begin{align*}
 \gamma^{\A{n}}(a) &= \sum_{\sigma\in\S{n}\text{ even}} a_{\sigma(1)}\otimes\dots\otimes a_{\sigma(n)}\text{ and}\\
 \gamma^{\Abar{n}}(a) &= \sum_{\sigma\in\S{n}\text{ odd}} a_{\sigma(1)}\otimes\dots\otimes a_{\sigma(n)},
 \intertext{where $\Abar{n}$ is the nontrivial coset of $\A{n}$, and their sum}
  \gamma^{\S{n}}(a) &= \sum_{\sigma\in\S{n}} a_{\sigma(1)}\otimes\dots\otimes a_{\sigma(n)}.
\end{align*}

%


Finally, we write the product of two elements of the form $\gamma^{G}(a_1,\ldots, a_n)$ in terms of more elements of this form.

\begin{lemma}\label{multiplication-on-Delta}
Let $R$ be a ring and $A$ an $R$-algebra. 
Let $n$ be a natural number and $G$ a subgroup of $\S{n}$. 
Then for each pair of tuples $(a_1,\dots,a_n)$ and $(b_1,\dots, b_n)$ in $A^n$, the equation
\[
\gamma^{G}(a)\gamma^{G}(b) = \sum_{\sigma\in G} \gamma^{G}(a_1 b_{\sigma(1)},\dots, a_n b_{\sigma(n)}).
\]
holds in $\fixpower{A}{n}{G}$.
\end{lemma}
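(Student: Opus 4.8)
The plan is to prove the identity by expanding each side into an explicit double sum of pure tensors indexed by pairs of elements of $G$, and then to identify the two sums after a single reindexing.

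Two standing facts make both sides well-defined elements of $\fixpower{A}{n}{G}$. First, the multiplication on $A^{\otimes n}$ is componentwise, $(x_1\otimes\cdots\otimes x_n)(y_1\otimes\cdots\otimes y_n)=x_1y_1\otimes\cdots\otimes x_ny_n$. Second, $G$ acts on $A^{\otimes n}$ by $R$-algebra automorphisms permuting the tensor factors, so $\fixpower{A}{n}{G}$ is an $R$-subalgebra; in particular the product $\gamma^{G}(a)\gamma^{G}(b)$ lies in it, and each summand $\gamma^{G}(a_1b_{\sigma(1)},\dots,a_nb_{\sigma(n)})$ on the right-hand side lies in it by \cref{definition-gamma-G}.

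For the left-hand side, unwinding $\gamma^{G}(a)=\sum_{\sigma\in G}a_{\sigma(1)}\otimes\cdots\otimes a_{\sigma(n)}$ and $\gamma^{G}(b)=\sum_{\tau\in G}b_{\tau(1)}\otimes\cdots\otimes b_{\tau(n)}$ and multiplying componentwise gives
\[
\gamma^{G}(a)\,\gamma^{G}(b)=\sum_{\sigma\in G}\sum_{\tau\in G}\bigl(a_{\sigma(1)}b_{\tau(1)}\bigr)\otimes\cdots\otimes\bigl(a_{\sigma(n)}b_{\tau(n)}\bigr).
\]
For the right-hand side, fix $\sigma\in G$ and apply \cref{definition-gamma-G} to the tuple whose $i$th entry is $a_ib_{\sigma(i)}$: its image under $\gamma^{G}$ is $\sum_{\tau\in G}\bigl(a_{\tau(1)}b_{\sigma\tau(1)}\bigr)\otimes\cdots\otimes\bigl(a_{\tau(n)}b_{\sigma\tau(n)}\bigr)$. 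Summing over $\sigma\in G$ yields
\[
\sum_{\sigma\in G}\gamma^{G}(a_1b_{\sigma(1)},\dots,a_nb_{\sigma(n)})=\sum_{\tau\in G}\sum_{\sigma\in G}\bigl(a_{\tau(1)}b_{\sigma\tau(1)}\bigr)\otimes\cdots\otimes\bigl(a_{\tau(n)}b_{\sigma\tau(n)}\bigr).
\]
Since for each fixed $\tau\in G$ the substitution $\rho=\sigma\tau$ gives a bijection of $G$ as $\sigma$ ranges over $G$, the right-hand double sum equals $\sum_{\tau\in G}\sum_{\rho\in G}\bigl(a_{\tau(1)}b_{\rho(1)}\bigr)\otimes\cdots\otimes\bigl(a_{\tau(n)}b_{\rho(n)}\bigr)$, which after renaming the summation indices is precisely the expansion of $\gamma^{G}(a)\gamma^{G}(b)$ obtained above; this completes the proof.

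The only delicate point — and essentially the sole place an error could enter — is keeping straight the precomposition convention of \cref{definition-gamma-G}: substituting the tuple $a_ib_{\sigma(i)}$ into $\gamma^{G}$ produces $b$-indices of the form $\sigma(\tau(i))$, not $\tau(\sigma(i))$, so that the matching reindexing must be the right translation $\sigma\mapsto\sigma\tau$ on $G$. Beyond this bookkeeping there is no genuine obstacle: the lemma is a formal consequence of componentwise multiplication in $A^{\otimes n}$ together with the closure of $G$ under multiplication, and it holds for arbitrary tuples with no flatness or rank hypothesis on $A$.
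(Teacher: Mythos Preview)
Your proof is correct and follows essentially the same approach as the paper: expand both sides as double sums over pairs of elements of $G$ using the componentwise multiplication in $A^{\otimes n}$, and match them via the reindexing $\rho=\sigma\tau$ (the paper writes it as $\sigma=\sigma'\tau$ starting from the other side, but this is the same bijection). The only cosmetic difference is that the paper uses the $\conjugate{a}{i}$ notation for the $i$th embedding $A\to A^{\otimes n}$ rather than writing out pure tensors directly.
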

If we denote the tuple  $(b_{\sigma(1)},\dots, b_{\sigma(n)})$ by $b_\sigma\in A^n$, and give $A^n$ the product algebra structure, then the product $ab_\sigma$ is $(a_1b_{\sigma(1)},\dots,a_nb_{\sigma(n)})$, and we may write the above identity as
\[
\gamma^{G}(a)\gamma^{G}(b) = \sum_{\sigma\in G} \gamma^G(ab_\sigma).
\]
\begin{proof}
By definition $\gamma^{G}(a)$ equals $\sum_{\tau \in G} \prod_{i=1}^n \conjugate{a_{\tau(i)}}{i}$---recalling the $\conjugate{\vphantom{a}}{i}$ notation for the $i$th embedding $A\to A^{\otimes n}$---so we have
\[
\gamma^{G}(a) \gamma^{G}(b) = \left(\sum_{\tau\in G} \prod_{i=1}^n \conjugate{a_{\tau (i)}}{i}\right)\left(\sum_{\sigma\in G} \prod_{i=1}^d \conjugate{b_{\sigma (i)}}{i}\right).
\]
Expanding the product we get
\[
\sum_{\tau\in G}\sum_{\sigma\in G}\prod_{i=1}^{n} a_{\tau(i)}^{(i)}b_{\sigma(i)}^{(i)} = \sum_{\tau\in G}\sum_{\sigma\in G}\prod_{i=1}^{n}\conjugate{a_{\tau (i)}}{i}\conjugate{b_{\sigma\tau(i)}}{i}
\]
since for fixed $\tau$ the composite $\sigma\tau$ runs over $G$ as $\sigma$ does. 
Now $\conjugate{a_{\tau (i)}}{i}\conjugate{b_{\sigma\tau(i)}}{i} = \conjugate{(ab_{\sigma})_{\tau(i)}}{i}$, so by reversing the order of summation we find
\[
\gamma^{G}(a) \gamma^{G}(b) = \sum_{\sigma\in G}\sum_{\tau\in G}\prod_{i=1}^n \conjugate{(ab_{\sigma})_{\tau(i)}}{i} = \sum_{\sigma\in 
G}\gamma^G(ab_{\sigma})
\]
as desired.
\end{proof}

\section{Proof of \cref*{mainthm-identify-discriminants}}
\label{section-discalg}

Recall that given a ring $R$ with a rank-$n\geq 2$ algebra $A$, the discriminant algebra of $A$ (over $R$) is defined to be the tensor product
\[\discalg{A} = \fixpower{A}{n}{\A{n}}\ \midotimes_{\mathclap{\fixpower{A}{n}{\S{n}}}}\ R,\]
using the two ring homomorphisms $\fixpower{A}{n}{\S{n}}\into\fixpower{A}{n}{\A{n}}$ and the Ferrand homomorphism $\ferrand_{A}\colon \fixpower{A}{n}{\S{n}}\to R$.
Equivalently, since the Ferrand homomorphism is surjective, we could present $\discalg{A}$ as the quotient of $\fixpower{A}{n}{\A{n}}$ by the ideal generated by elements of the form $\{x-\ferrand_{A}(x):x\in\fixpower{A}{n}{\S{n}}\}$.

In this section, we prove \cref{mainthm-identify-discriminants} in the following form:

\begin{theorem}\label{exact-sequence}
Let $R$ be a ring, and let $A$ be an $R$-algebra of rank $n$, with $n\geq 2$.
\begin{enumerate}[label=(\alph*), ref=\cref{exact-sequence}(\alph*)]
 \item \label{exact-sequence-description}
 Its discriminant algebra $\discalg{A}$ fits naturally into a short exact sequence of $R$-modules
 \[
 \begin{tikzcd}
 0\arrow{r} & R \arrow{r} & \discalg{A} \arrow{r} & \extpower^n A \arrow{r} & 0.
 \end{tikzcd}
 \]
 In particular, $\discalg{A}$ is an $R$-algebra of rank $2$.
\item \label{discriminants-identified}
 The resulting composition of isomorphisms $\extpower^n A \cong \discalg{A}/(R\cdot 1) \cong \extpower^2\discalg{A}$ 
 identifies the discriminant bilinear forms of $A$ and $\discalg{A}$.
\end{enumerate}
\end{theorem}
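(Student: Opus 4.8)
The plan is to prove both parts after passing to a faithfully flat cover over which $A$ becomes free, and then to read everything off from an explicit basis. First I would note that forming $\discalg{A/R}$ commutes with base change: by \cref{fixpower-commutes-with-base-change} (applied to both $\A{n}$ and $\S{n}$) together with the base-change compatibility of $\ferrand_{A/R}$ recorded after \cref{definition-ferrand}, the description $\discalg{A/R}=\fixpower{A}{n}{\A{n}}\otimes_{\fixpower{A}{n}{\S{n}}}R$ is preserved by $R'\otimes_R(-)$; since $\extpower^n$ also commutes with base change and exactness of a sequence of modules is local, we may assume $A$ is free with $R$-basis $\theta=(\theta_1,\dots,\theta_n)$. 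Put $e=\gamma^{\A{n}}(\theta_1,\dots,\theta_n)$ and $e'=\gamma^{\Abar{n}}(\theta_1,\dots,\theta_n)$, so that $e+e'=\gamma^{\S{n}}(\theta_1,\dots,\theta_n)\in\fixpower{A}{n}{\S{n}}$. By \cref{generators-for-G-invariants} the only $\S{n}$-orbit in $\set{n}^n$ that splits into two $\A{n}$-orbits is the orbit of bijections, so the $R$-bases $\{\gamma^\alpha(\theta):\alpha\in\set{n}^n/\A{n}\}$ of $\fixpower{A}{n}{\A{n}}$ and $\{\gamma^\alpha(\theta):\alpha\in\set{n}^n/\S{n}\}$ of $\fixpower{A}{n}{\S{n}}$ differ only in that the former contains $e$ and $e'$ where the latter contains $e+e'$. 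Hence $\fixpower{A}{n}{\A{n}}=\fixpower{A}{n}{\S{n}}\oplus Re$ as $R$-modules, and since every element of $\fixpower{A}{n}{\S{n}}$ maps into $R\cdot\dot1$ in $\discalg{A}$, the classes $\dot1$ and $\dot e$ span $\discalg{A}$ as an $R$-module.

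The heart of the matter is the identity $x\cdot(e-e')=\ferrand_{A/R}(x)\,(e-e')$ in $\fixpower{A}{n}{\A{n}}$ for every $x\in\fixpower{A}{n}{\S{n}}$. I would prove it in two steps. Applying the order-two automorphism of $\fixpower{A}{n}{\A{n}}$ induced by an odd permutation — which fixes $\fixpower{A}{n}{\S{n}}$ pointwise and swaps $e$ and $e'$ — to the basis expansion of $xe$ shows that $x(e-e')$ is always a scalar multiple $\mu(x)\,(e-e')$ and that $x\mapsto\mu(x)$ is an $R$-algebra homomorphism $\fixpower{A}{n}{\S{n}}\to R$. Then one identifies $e-e'=\det\bigl(\conjugate{\theta_j}{i}\bigr)_{1\le i,j\le n}$, the determinant of the matrix over the commutative ring $A^{\otimes n}$ whose $(i,j)$-entry is $\theta_j$ placed in the $i$th tensor factor; multiplying by $a^{\otimes n}=\prod_i\conjugate{a}{i}$ scales the $i$th row by the matrix of multiplication by $a\in A':=R'\otimes_R A$, hence scales the determinant by $\norm_{A'/R'}(a)$, giving $a^{\otimes n}\cdot(e-e')=\norm_{A'/R'}(a)\,(e-e')$ over every $R$-algebra $R'$. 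Thus $\mu$ satisfies the property that characterizes $\ferrand_{A/R}$ in \cref{definition-ferrand}, so $\mu=\ferrand_{A/R}$.

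Part (a) then follows by bookkeeping. Writing $J=\ker\ferrand_{A/R}$: the displayed identity shows that for $y\in\fixpower{A}{n}{\S{n}}$ the $Re$-component of $y\cdot e$ is $\ferrand_{A/R}(y)\,e$, so for $x\in J$ we get $xe\in\fixpower{A}{n}{\S{n}}$; combined with $e^2=\gamma^{\S{n}}(\theta)\,e-ee'$ (since $e,e'$ are the roots of $T^2-\gamma^{\S{n}}(\theta)T+ee'$ over $\fixpower{A}{n}{\S{n}}$, with $ee'\in\fixpower{A}{n}{\S{n}}$) one finds $(xe)\cdot e=xe^2\in\fixpower{A}{n}{\S{n}}$, forcing its $Re$-component $\ferrand_{A/R}(xe)\,e$ to vanish, i.e.\ $\ferrand_{A/R}(xe)=0$. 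Therefore the ideal $JB$ defining $\discalg{A}$ equals $J+Je$, lies in $\fixpower{A}{n}{\S{n}}$, and satisfies $\ferrand_{A/R}(JB)=0$, hence equals $J$ inside $\fixpower{A}{n}{\S{n}}$; so $\discalg{A}=(\fixpower{A}{n}{\S{n}}/J)\oplus Re=R\dot1\oplus R\dot e$ and $R\to\discalg{A}$ is injective with $\{\dot1,\dot e\}$ an $R$-basis. The quotient map $\psi\colon\discalg{A}\to\extpower^n A$ is the descent of the $R$-linear map sending $e\mapsto\theta_1\wedge\cdots\wedge\theta_n$, $e'\mapsto-\theta_1\wedge\cdots\wedge\theta_n$ and the other basis vectors to $0$ (which annihilates $\fixpower{A}{n}{\S{n}}\supseteq JB$); it kills $\dot1$ and carries $\dot e$ to $\theta_1\wedge\cdots\wedge\theta_n$, so it is surjective with kernel $R\cdot\dot1$, and a short computation expanding $\gamma^{\A{n}}$ under a change of basis shows it is independent of $\theta$, hence canonical. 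This proves \cref{exact-sequence-description}; I expect the injectivity of $R\to\discalg{A}$ to be the only genuinely non-formal point, since without the displayed identity one only gets $\discalg{A}\cong R/\mathfrak a\oplus R$ with $\mathfrak a$ an ideal consisting of $2$-torsion.

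For (b), the short exact sequence gives the canonical isomorphisms $\extpower^n A\xleftarrow{\ \psi\ }\discalg{A}/(R\cdot\dot1)$ and $\discalg{A}/(R\cdot\dot1)\cong\extpower^2\discalg{A}$, under which $\theta_1\wedge\cdots\wedge\theta_n$ corresponds to $\dot1\wedge\dot e$. In the quadratic $R$-algebra $\discalg{A}$ with basis $\{\dot1,\dot e\}$ we have $\dot e^2=t\,\dot e-\nu\,\dot1$ with $t=\ferrand_{A/R}(\gamma^{\S{n}}(\theta))$ and $\nu=\ferrand_{A/R}(ee')$, so $\disc{\discalg{A}/R}(\dot1\wedge\dot e,\dot1\wedge\dot e)=t^2-4\nu=\ferrand_{A/R}\bigl(\gamma^{\S{n}}(\theta)^2-4ee'\bigr)=\ferrand_{A/R}\bigl((e-e')^2\bigr)$. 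Squaring the tensor Vandermonde gives $(e-e')^2=\det\bigl(\conjugate{\theta_j}{i}\bigr)^2=\det\bigl(\sum_i\conjugate{(\theta_j\theta_k)}{i}\bigr)_{jk}=\det\bigl(e_1(\theta_j\theta_k)\bigr)_{jk}$, an identity in the commutative ring $\fixpower{A}{n}{\S{n}}$; applying the ring homomorphism $\ferrand_{A/R}$ and \eqref{ek-sent-to-sk} turns this into $\det\bigl(\trace_{A/R}(\theta_j\theta_k)\bigr)_{jk}=\disc{A/R}(\theta_1\wedge\cdots\wedge\theta_n,\theta_1\wedge\cdots\wedge\theta_n)$. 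Since $\extpower^n A$ is free of rank one and the two discriminant forms agree on corresponding generators, they are identified, which is \cref{discriminants-identified}. Finally one descends both statements from the free case back to arbitrary $A$ using the base-change compatibility from the first paragraph.
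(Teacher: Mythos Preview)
Your argument is correct, and the computational core---the identification of $e-e'=\gamma^{\A{n}}(\theta)-\gamma^{\Abar{n}}(\theta)$ with the Vandermonde determinant $\det(\conjugate{\theta_j}{i})$, the observation that $\fixpower{A}{n}{\S{n}}$ acts on $e-e'$ through $\ferrand_{A/R}$, and the evaluation of $(e-e')^2$ as $\det(e_1(\theta_j\theta_k))$---matches the paper's exactly. The organization, however, is genuinely different.

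For part (a) the paper never computes the ideal $JB$ explicitly. Instead it establishes, globally and without choosing a basis, a short exact sequence of $\fixpower{A}{n}{\S{n}}$-modules
\[
0\longrightarrow \fixpower{A}{n}{\S{n}}\longrightarrow \fixpower{A}{n}{\A{n}}\longrightarrow \extpower^n A\longrightarrow 0,
\]
where the right-hand map is induced by the multilinear, alternating map $(a_1,\dots,a_n)\mapsto\gamma^{\A{n}}(a_1,\dots,a_n)$ from $A^n$ (so it is basis-free from the outset, and your basis-independence check for $\psi$ becomes unnecessary). Then it tensors this sequence along $\ferrand_{A/R}$. Your key identity $x\cdot(e-e')=\ferrand_{A/R}(x)(e-e')$ becomes the statement that $\fixpower{A}{n}{\S{n}}$ already acts on the quotient $\extpower^n A$ through $\ferrand_{A/R}$, so the right-hand term is unchanged by the base change; this yields the right-exact sequence $R\to\discalg{A}\to\extpower^n A\to 0$ in one stroke, with no bookkeeping about $Je$ or $e^2$. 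Injectivity of $R\to\discalg{A}$ is then argued globally: if $r\mapsto 0$ then $r$ annihilates the quotient $\extpower^n A$, which is locally free of rank one and hence faithful. This is shorter than your route through $\ferrand_{A/R}(xe)=0$, and it avoids passing to a free cover for part (a) except to verify the isomorphism $\extpower^n A\cong \fixpower{A}{n}{\A{n}}/\fixpower{A}{n}{\S{n}}$.

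For part (b) the paper proves the bilinear identity $(\dot\gamma^{\A{n}}(a)-\dot\gamma^{\Abar{n}}(a))(\dot\gamma^{\A{n}}(b)-\dot\gamma^{\Abar{n}}(b))=\disc{A}(a_1\wedge\cdots\wedge a_n,\,b_1\wedge\cdots\wedge b_n)$ for all $a,b\in A^n$, and separately identifies the standard involution on $\discalg{A}$ as the one induced by a transposition; it then computes $\disc{\discalg{A}}$ via traces rather than via $t^2-4\nu$. Your diagonal version $a=b=\theta$ suffices because $\extpower^n A$ has rank one, so nothing is lost, but the paper's formulation is useful elsewhere (e.g.\ in the product-algebra section). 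In short: your proof is sound; the paper's is more canonical and slightly slicker on the exact-sequence side, while yours is more self-contained at the cost of an extra basis-independence verification.
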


In fact, if we denote for each $(a_1,\dots,a_n)\in A^n$ the image of the $\A{n}$-invariant tensor
\[\gamma^{\A{n}}(a_1,\dots,a_n) \coloneqq  \sum_{\sigma\in \A{n}}a_{\sigma(1)}\otimes\dots\otimes a_{\sigma(n)},\]
 under the surjection $\fixpower{A}{n}{\A{n}}\onto \discalg{A}$ by $\dot\gamma^{\A{n}}(a_1,\dots,a_n)$, then we can describe the $R$-module homomorphism $\discalg{A}\to\extpower^n A$ in the exact sequence as the unique one sending $1$ to $0$ and each $\dot\gamma^{\A{n}}(a_1,\dots,a_n)$ to $a_1\wedge\dots\wedge a_n$.
The isomorphism $\extpower^n A\cong \extpower^2\discalg{A}$ then sends $a_1\wedge\dots\wedge a_n$ to $1\wedge \dot\gamma^{\A{n}}(a_1,\dots,a_n)$.

The outline of the proof of \cref{exact-sequence-description} is as follows.
We show first that the $\fixpower{A}{n}{\S{n}}$-linear map $A^{\otimes n}\to \fixpower{A}{n}{\A{n}}$ sending $a_1\otimes\dots\otimes a_n$ to $\gamma^{\A{n}}(a_1,\dots,a_n)$ descends to an isomorphism of their quotient modules $\extpower^n A\cong \fixpower{A}{n}{\A{n}}/\fixpower{A}{n}{\S{n}}$.
This gives us a short exact sequence:
\[
 \begin{tikzcd}
 0\arrow{r} & \fixpower{A}{n}{\S{n}} \arrow{r} & \fixpower{A}{n}{\A{n}} \arrow{r} & \extpower^n A \arrow{r} & 0.
 \end{tikzcd}
\]
Then we show that base changing this exact sequence along the Ferrand homomorphism $\ferrand_{A/R}\colon\fixpower{A}{n}{\S{n}}\to R$ gives us the desired exact sequence of $R$-modules in \cref{exact-sequence-description}.
The following lemma gives the technical results needed to make this argument go through.

\begin{lemma}\label{rank-n-props}
 Let $R$ be a ring and $A$ an $R$-algebra of rank $n\geq 2$.
 \begin{enumerate}[ref=(\arabic*)]
  \item \label{fixpower-module-structure}
  The $R$-linear map $A^{\otimes n}\to \fixpower{A}{n}{\A{n}}$ sending $a_1\otimes\dots\otimes a_n$ to $\gamma^{\A{n}}(a_1,\dots,a_n)$ descends to an isomorphism $\extpower^n A \cong \fixpower{A}{n}{\A{n}}/\fixpower{A}{n}{\S{n}}$.
  In particular, $\extpower^n A$ inherits an $\fixpower{A}{n}{\S{n}}$-module structure from $A^{\otimes n}$.
  \item \label{ferrand-via-extpower}
  The resulting homomorphism from $\fixpower{A}{n}{\S{n}}$ to $\End_R(\extpower^n A)\cong R$ is the Ferrand homomorphism $\ferrand_{A/R}$.
 \end{enumerate}
\end{lemma}

\begin{proof}  
  \cref{fixpower-module-structure} The $R$-module homomorphism $A^{\otimes n}\to \fixpower{A}{n}{\A{n}}$ sending $a_1\otimes\dots\otimes a_n$ to $\gamma^{\A{n}}(a_1,\dots,a_n)$ is well-defined since $\gamma^{\A{n}}(a_1,\dots,a_n)$ is $R$-linear in each $a_i$.
  It is also easily seen to be $\fixpower{A}{n}{\S{n}}$-linear.
  Therefore we obtain an $\fixpower{A}{n}{\S{n}}$-module homomorphism $A^{\otimes n}\to \fixpower{A}{n}{\A{n}}/\fixpower{A}{n}{\S{n}}$ sending $a_1\otimes\dots\otimes a_n$ to the class of $\gamma^{\A{n}}(a_1,\dots,a_n)$.
  But if any $a_i=a_j$ with $i\neq j$, we find that $\gamma^{\A{n}}(a_1,\dots,a_n)$ is $\S{n}$-invariant and thus equal to zero in the quotient.
  Thus we obtain a well-defined $R$-module homomorphism
  \[\extpower^n A \to \fixpower{A}{n}{\A{n}}/\fixpower{A}{n}{\S{n}}\]
  sending $a_1\wedge\dots\wedge a_n$ to the class of $\gamma^{\A{n}}(a_1,\dots,a_n)$.
  
  Since both sides commute with localization (see \cref{fixpower-commutes-with-base-change} for the right-hand side), it suffices to check that this homomorphism is an isomorphism in the case that $A$ is free as an $R$-module, say with basis $(\theta_1,\dots,\theta_n)$.
  Then $\extpower^n A$ is free with singleton basis $\theta_1\wedge\dots\wedge \theta_n$.
  Applying \cref{generators-for-G-invariants}, we also find that
  \[
  \fixpower{A}{n}{\A{n}} = \bigoplus_{\mathclap{\alpha\in \set{n}^n/\A{n}}}R\cdot{\gamma^\alpha(\theta)}
  \qquad\text{ and }\qquad
  \fixpower{A}{n}{\S{n}} = \bigoplus_{\mathclap{\alpha\in \set{n}^n/\S{n}}}R\cdot{\gamma^\alpha(\theta)}.
  \]
  Now any $\A{n}$-orbit of a tuple with a repeated index is actually its $\S{n}$-orbit, so these bases are identical except that one uses $\gamma^{\A{n}}(\theta)$ and $\gamma^{\Abar{n}}(\theta)$, and the other their sum $\gamma^{\S{n}}(\theta)$.
  Thus the quotient is
  \[\fixpower{A}{n}{\A{n}}/\fixpower{A}{n}{\S{n}} 
  = \frac{R\cdot{\gamma^{\A{n}}(\theta)}\oplus R\cdot{\gamma^{\Abar{n}}(\theta)}}{R\cdot({\gamma^{\A{n}}(\theta)+\gamma^{\Abar{n}}(\theta)})},\]  
  for which we may take either the class of $\gamma^{\A{n}}(\theta)$ or $\gamma^{\Abar{n}}(\theta)$ as a singleton $R$-module basis.
  Then under the homomorphism $\extpower^n A\to \fixpower{A}{n}{\A{n}}/\fixpower{A}{n}{\S{n}}$, the free $R$-module generator $\theta_1\wedge\dots\wedge\theta_n$ for $\extpower^n A$ is transformed into the class of $\gamma^{\A{n}}(\theta_1,\dots,\theta_n)$, which we now know to also be a free $R$-module generator for $\fixpower{A}{n}{\A{n}}/\fixpower{A}{n}{\S{n}}$.
  
  \cref{ferrand-via-extpower} We check the defining property of the Ferrand homomorphism.
  First, let $a\in A$; we will see which endomorphism of $\extpower^n A$ the element $a\otimes\dots\otimes a\in\fixpower{A}{n}{\S{n}}$ gives rise to.
  The $\fixpower{A}{n}{\S{n}}$-module structure on $\extpower^n A$ is the one inherited from the $\fixpower{A}{n}{\S{n}}$-algebra $A^{\otimes n}$, so the endomorphism of $\extpower^n A$ corresponding to $a\otimes\dots\otimes a$ is
  \[b_1\wedge\dots\wedge b_n \mapsto (ab_1)\wedge\dots\wedge(ab_n).\]
  But by definition, this endomorphism is multiplication by $\norm_{A/R}(a)\in R$, as desired.
  
  Now we check that for each $R$-algebra $R'$, if we set $A'=R'\otimes_R A$ then the base-changed homomorphism 
  \[\fixpower[R']{A'}{n}{\S{n}}\cong R'\otimes_R \fixpower{A}{n}{\S{n}} \to R'\otimes_R \End_R(\extpower^n A) \cong \End_{R'}(\extpower^n A')\]
  has the same property, sending each $a\otimes\dots\otimes a$ to multiplication by $\norm_{A'/R'}(a)\in R'$.
  But this follows because the base-changed homomorphism is again the one expressing the fact that $\extpower^n A'$ inherits an $\fixpower[R']{A'}{n}{\S{n}}$-module structure from $A'^{\otimes_{R'} n}$.
\end{proof}

\begin{proof}[Proof of \cref{exact-sequence-description}]
 By \cref{rank-n-props}\cref{fixpower-module-structure}, we obtain a short exact sequence of $\fixpower{A}{n}{\S{n}}$-modules
 \[0\to \fixpower{A}{n}{\S{n}}\to \fixpower{A}{n}{\A{n}}\to \extpower^n A\to 0\]
 where the right-hand map sends $\gamma^{\A{n}}(a_1,\dots,a_n)\mapsto a_1\wedge\dots\wedge a_n$.
 Tensoring along the Ferrand homomorphism $\ferrand_{A/R}\colon \fixpower{A}{n}{\S{n}}\to R$ is equivalent to quotienting by its kernel $I = (x-\ferrand_{A}(x):x\in\fixpower{A}{n}{\S{n}})$, obtaining an \emph{a priori} merely right-exact sequence
 \[
 \fixpower{A}{n}{\S{n}}/I \to \fixpower{A}{n}{\A{n}}/\bigl(I\cdot \fixpower{A}{n}{\A{n}}\bigr) \to \extpower^n A / \bigl(I\cdot\extpower^n A\bigr) \to 0.
 \]
 From left to right, these modules are $R$, $\discalg{A}$, and $\extpower^n A$, in the latter case because $I$, being the kernel of the Ferrand homomorphism, is also the annihilator of the $\fixpower{A}{n}{\S{n}}$-module $\extpower^n A$ by \cref{rank-n-props}\cref{ferrand-via-extpower}.
  Thus we have a right exact sequence of $R$-modules
 \[R\to \discalg{A} \to \extpower^n A\to 0,\]
 where the left-hand map sends $1\mapsto 1$ and the right-hand map sends each $\dot\gamma^{\A{n}}(a_1,\dots,a_n)$ to $a_1\wedge\dots\wedge a_n$.
 But then $R\to\discalg{A}$ must be injective: if $r\mapsto 0$, then $r$ acts by zero on $\discalg{A}$ and hence on its quotient $\extpower^n A$.
 But then $r$ must be zero, since $\extpower^n A$ is locally free of rank $1$ and thus a faithful $R$-module.

 (Another way to see that $R\to \discalg{A}$ is injective is to note that $I$ is the annihilator of the $\fixpower{A}{n}{\S{n}}$-module $\fixpower{A}{n}{\A{n}}/\fixpower{A}{n}{\S{n}}$.
 It is thus the largest ideal of $\fixpower{A}{n}{\S{n}}$ whose product with $\fixpower{A}{n}{\A{n}}$ is contained in $\fixpower{A}{n}{\S{n}}$; therefore $I\cdot\fixpower{A}{n}{\A{n}} = I$.
 Thus the map $R\to \discalg{A}$ is the map
 \[
 \fixpower{A}{n}{\S{n}}/I \to \fixpower{A}{n}{\A{n}}/\bigl(I\cdot \fixpower{A}{n}{\A{n}}\bigr) = \fixpower{A}{n}{\A{n}}/I,
 \]
 which is manifestly injective.)
\end{proof}


The remainder of the section is devoted to proving \cref{discriminants-identified}. 
Our first result relates the multiplication of some elements of $\discalg{A}$ to the discriminant form of $A$.

\begin{lemma}\label{relation-with-discriminant-form}
Let $R$ be a ring. Let $A$ be an $R$-algebra of rank $n\geq 2$. 
Let $a=(a_1,\ldots, a_n)$ and $b = (b_1,\ldots, b_n)$ be in $A^n$.
Then the equality
\[
\bigl(\dot\gamma^{\A{n}}(a) - \dot\gamma^{\Abar{n}}(a)\bigr)
\bigl(\dot\gamma^{\A{n}}(b) - \dot\gamma^{\Abar{n}}(b)\bigr) = 
\disc{A}(a_1\wedge \cdots \wedge a_n, b_1\wedge \cdots \wedge b_n)
\]
holds in $\discalg{A}$.
\end{lemma}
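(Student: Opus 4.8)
The plan is to recognize each factor on the left-hand side as a determinant over the commutative ring $A^{\otimes n}$, and then transport the computation through the Ferrand homomorphism. For a tuple $c=(c_1,\dots,c_n)\in A^n$, set $u(c):=\gamma^{\A{n}}(c)-\gamma^{\Abar{n}}(c)\in\fixpower{A}{n}{\A{n}}$, so that the left-hand side of the asserted identity is the image $\dot{u(a)}\,\dot{u(b)}$ in $\discalg{A}$. By the definitions of $\gamma^{\A{n}}$ and $\gamma^{\Abar{n}}$ we have $u(c)=\sum_{\sigma\in\S{n}}\sgn(\sigma)\,c_{\sigma(1)}\otimes\dots\otimes c_{\sigma(n)}$, and since $c_{\sigma(1)}\otimes\dots\otimes c_{\sigma(n)}=\prod_{i=1}^{n}\conjugate{c_{\sigma(i)}}{i}$ in $A^{\otimes n}$, this is exactly $\det M_c$, where $M_c$ is the $n\times n$ matrix over $A^{\otimes n}$ whose $(i,j)$-entry is $\conjugate{c_j}{i}$. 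Recording this determinant description is the one step with genuine content; everything afterward is formal.

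Given it, I would compute $u(a)\,u(b)=(\det M_a)(\det M_b)=\det\!\bigl(\transpose{M_a}M_b\bigr)$, using $\det\transpose{M_a}=\det M_a$ and multiplicativity of the determinant over $A^{\otimes n}$. Since the $i$-th tensor-factor embedding $A\to A^{\otimes n}$ (sending $x$ to $\conjugate{x}{i}$) is a ring homomorphism for each $i$, the $(j,k)$-entry of $\transpose{M_a}M_b$ is $\sum_{i=1}^{n}\conjugate{a_j}{i}\conjugate{b_k}{i}=\sum_{i=1}^{n}\conjugate{(a_jb_k)}{i}=e_1(a_jb_k)$, which lies in $\fixpower{A}{n}{\S{n}}$. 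Hence $u(a)\,u(b)=\det\bigl(e_1(a_jb_k)\bigr)_{jk}$ is an $\S{n}$-invariant tensor (as it must be, being a product of two tensors fixed by $\A{n}$ and negated by $\Abar{n}$).

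Next I would apply the Ferrand homomorphism $\ferrand_{A/R}\colon\fixpower{A}{n}{\S{n}}\to R$. Being a homomorphism of commutative rings, it commutes with formation of determinants, and by \eqref{ek-sent-to-sk} it sends $e_1(a_jb_k)$ to $s_1(a_jb_k)=\trace_{A/R}(a_jb_k)$, so
\[
\ferrand_{A/R}\bigl(u(a)\,u(b)\bigr)=\det\bigl(\trace_{A/R}(a_jb_k)\bigr)_{jk}=\disc{A}(a_1\wedge\dots\wedge a_n,\,b_1\wedge\dots\wedge b_n).
\]
To finish, recall that $\discalg{A}$ is the quotient of $\fixpower{A}{n}{\A{n}}$ by the ideal generated by $\{x-\ferrand_{A/R}(x):x\in\fixpower{A}{n}{\S{n}}\}$, so the image in $\discalg{A}$ of any $y\in\fixpower{A}{n}{\S{n}}$ is $\ferrand_{A/R}(y)\cdot 1$. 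Applying this to $y=u(a)\,u(b)$, and using that the quotient map $\fixpower{A}{n}{\A{n}}\to\discalg{A}$ is a ring homomorphism (so $\dot{u(a)}\,\dot{u(b)}=\dot{u(a)\,u(b)}$ and $\dot{u(c)}=\dot\gamma^{\A{n}}(c)-\dot\gamma^{\Abar{n}}(c)$), yields the claimed equality in $\discalg{A}$.

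I expect the determinant identity $u(c)=\det M_c$ to be the crux: it is the observation that makes the whole computation collapse, and once it is in hand the remaining steps are the multiplicativity of $\det$, the known value $\ferrand_{A/R}(e_1(\cdot))=\trace_{A/R}(\cdot)$, and the quotient presentation of the discriminant algebra. A point to double-check along the way is that $u(a)\,u(b)$ genuinely lands in $\fixpower{A}{n}{\S{n}}$ before $\ferrand_{A/R}$ is applied, which the determinant form (or the parity argument) confirms.
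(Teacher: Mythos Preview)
Your proof is correct and follows essentially the same route as the paper: both recognize $\gamma^{\A{n}}(c)-\gamma^{\Abar{n}}(c)$ as the determinant of the matrix $(\conjugate{c_j}{i})$, multiply the determinants to obtain $\det\bigl(e_1(a_jb_k)\bigr)$, and then apply the Ferrand homomorphism entrywise to arrive at $\det\bigl(\trace_{A/R}(a_jb_k)\bigr)=\disc{A}(a_1\wedge\dots\wedge a_n,b_1\wedge\dots\wedge b_n)$. Your additional remarks about why $u(a)\,u(b)$ lies in $\fixpower{A}{n}{\S{n}}$ and why its image in $\discalg{A}$ equals $\ferrand_{A/R}(u(a)\,u(b))\cdot 1$ are spelled out a bit more explicitly than in the paper, but the underlying argument is the same.
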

\begin{proof}
Recall fromm \cref{conjugate} at the beginning of \cref{section-understanding} the notation $\conjugate{(\cdot)}{i}$ for the $i$th embedding $A\to A^{\otimes n}$.
Then we may write the difference $\gamma^{\A{n}}(a) - \gamma^{\Abar{n}}(a)$ in $A^{\otimes n}$ as
\[
\sum_{\sigma\in\A{n}}\prod_{i=1}^n \conjugate{a_i}{\sigma(i)} - \sum_{\sigma\in\Abar{n}}\prod_{i=1}^n \conjugate{a_i}{\sigma(i)} = \sum_{\sigma \in 
\S{n}} \sgn(\sigma) \prod_{i=1}^n a_i^{(\sigma(i))},
\]
the determinant of the $n\times n$-matrix whose $ij$th element is 
$\conjugate{a_i}{j}$, and similarly $\gamma^{\A{n}}(b) - \gamma^{\Abar{n}}(b) 
= \det\bigl(\conjugate{b_k}{j}\bigr)_{jk}$. From this, we can write
\begin{align*}
\bigl(\gamma^{\A{n}}(a) - \gamma^{\Abar{n}}(a)\bigr)
\bigl(\gamma^{\A{n}}(b) - \gamma^{\Abar{n}}(b)\bigr) &=
\det\bigl(\conjugate{a_i}{j}\bigr)_{ij} \det \bigl(\conjugate{b_k}{j}\bigr)_{jk}\\
&= \det \bigl( \sum_{j=1}^n \conjugate{a_i}{j}\conjugate{b_k}{j} \bigr)_{ik}.
\end{align*}
The symbol $\conjugate{a_i}{j}\conjugate{b_k}{j}$ is equal to 
$\conjugate{(a_ib_k)}{j}$, and the sum over all $j$ gives by definition $e_1(a_i b_k)$. 
This is in $(A^{\otimes n})^{\S{n}}$ and by \cref{ek-sent-to-sk} its image under the Ferrand homomorphism is $s_1(a_i 
b_k) = \trace_A(a_i b_k)$. 
Hence in $\discalg{A}$ we have
\begin{align*}
\bigl(\dot\gamma^{\A{n}}(a) - \dot\gamma^{\Abar{n}}(a)\bigr)
\bigl(\dot\gamma^{\A{n}}(b) - \dot\gamma^{\Abar{n}}(b)\bigr) &= \det \bigl(\trace_A(a_ib_k)\bigr)_{ik}\\
&= \delta_A(a_1\wedge\dots\wedge a_n, b_1\wedge\dots\wedge b_n)
\end{align*}
as we wanted to show.
\end{proof}

For comparing the discriminant form of $\discalg{A}$ with that of $A$, we will need to understand the trace map $\trace_{\discalg{A}}\colon\discalg{A}\to R$. 
A helpful intermediate step is to understand the so-called standard involution on any rank-$2$ algebra:

\begin{definition}\label{std-involution}
Let $R$ be a ring and $D$ an $R$-algebra of rank $2$. 
The function $\stdi{D}\colon D\to D$ sending $d\mapsto \trace_D(d)-d$ is called the \emph{standard involution} on $D$, and has the property that
\[d+\stdi{D}(d) = \trace_D(d)\text{ and } d\cdot\stdi{D}(d) = \norm_D(d)\]
for all $d\in D$.
It is in fact an involution and an $R$-algebra homomorphism, and is even the unique $R$-algebra involution $\tau\colon D\to D$ such that $d\cdot\tau(d)\in R$ for all $d\in D$ (see, for example, \cite[Lemma 2.9]{Voight11}).
\end{definition}

\begin{corollary}\label{involution-on-Delta}
Let $R$ be a ring and $A$ an $R$-algebra of rank $n$, with $n \geq 2$. 
Let $\tau$ be the map $\discalg{A}\to\discalg{A}$ induced by the action of a transposition on the tensor factors in $(A^{\otimes n})^{\A{n}}$. 
Then $\tau$ is the standard involution on $\discalg{A}$. 
\end{corollary}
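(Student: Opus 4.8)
The plan is to recognize $\tau$ through the uniqueness clause of \cref{std-involution}. Since $\discalg{A}$ has rank $2$ by \cref{exact-sequence-description}, it admits a standard involution, and that is the \emph{only} $R$-algebra involution $\sigma$ of $\discalg{A}$ with $d\cdot\sigma(d)\in R$ for all $d\in\discalg{A}$. So it suffices to show that the map $\tau$ of the statement is an $R$-algebra involution and that $d\cdot\tau(d)\in R$ for every $d\in\discalg{A}$.

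First I would verify that $\tau$ is a well-defined $R$-algebra involution of $\discalg{A}$. Fix a transposition $w\in\S{n}$. Permuting tensor factors by $w$ is an $R$-algebra automorphism of $A^{\otimes n}$; because $\A{n}$ is normal in $\S{n}$, it restricts to an automorphism of the subalgebra $\fixpower{A}{n}{\A{n}}$, and it fixes $\fixpower{A}{n}{\S{n}}$ pointwise. It is therefore an automorphism of $\fixpower{A}{n}{\A{n}}$ as an $\fixpower{A}{n}{\S{n}}$-algebra, and tensoring with $R$ along $\ferrand_{A/R}$ produces the $R$-algebra endomorphism $\tau$ of $\discalg{A}$; from $w^2=\id$ we get $\tau^2=\id$. (Normality of $\A{n}$ also shows $\tau$ does not depend on which transposition $w$ is chosen: two transpositions differ by an even permutation, which acts as the identity on $\fixpower{A}{n}{\A{n}}$.)

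The substance of the proof is the identity $d\cdot\tau(d)\in R$. Because $\ferrand_{A/R}$ is surjective, so is $\fixpower{A}{n}{\A{n}}\onto\discalg{A}$, so I may write $d=\dot x$ with $x\in\fixpower{A}{n}{\A{n}}$; then $\tau(d)=\dot y$ where $y:=w\cdot x$, and $y$ again lies in $\fixpower{A}{n}{\A{n}}$ by normality of $\A{n}$. The key point is that $xy$ is $\S{n}$-invariant: it is fixed by $\A{n}$ (which fixes $x$ and $y$ separately), and it is fixed by $w$ since $w\cdot(xy)=(w\cdot x)(w\cdot y)=y\cdot x=xy$, using $w^2=\id$ and the commutativity of $A^{\otimes n}$. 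Hence the image of $xy$ in $\discalg{A}$ is the scalar $\ferrand_{A/R}(xy)$, so $d\cdot\tau(d)=\dot x\,\dot y=\dot{xy}=\ferrand_{A/R}(xy)\in R$. Applying the uniqueness clause of \cref{std-involution} then identifies $\tau$ with $\stdi{\discalg{A}}$.

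I expect the only real subtlety to be the two appeals to normality of $\A{n}$ in $\S{n}$ — that $w\cdot x$ stays in $\fixpower{A}{n}{\A{n}}$ and that the product $xy$ drops into $\fixpower{A}{n}{\S{n}}$; everything else is routine base-change and surjectivity bookkeeping. As an alternative to the coset manipulations one could localize to reduce to the case that $A$ is $R$-free with basis $\theta$, use that $\discalg{A}$ is then $R$-free on $1$ and $\dot\gamma^{\A{n}}(\theta)$ with $\tau\bigl(\dot\gamma^{\A{n}}(\theta)\bigr)=\dot\gamma^{\Abar{n}}(\theta)$, and verify $d\cdot\tau(d)\in R$ in coordinates from the facts that $\dot\gamma^{\A{n}}(\theta)+\dot\gamma^{\Abar{n}}(\theta)=\ferrand_{A/R}\bigl(\gamma^{\S{n}}(\theta)\bigr)$ lies in $R$ and that $\gamma^{\A{n}}(\theta)\gamma^{\Abar{n}}(\theta)$ is $\S{n}$-invariant.
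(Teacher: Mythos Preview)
Your proof is correct and follows essentially the same approach as the paper's: verify that $\tau$ is a well-defined $R$-algebra involution (using that a transposition fixes $\fixpower{A}{n}{\S{n}}$ pointwise), check that $x\cdot(w\cdot x)$ is $\S{n}$-invariant for every $x\in\fixpower{A}{n}{\A{n}}$, and conclude via the uniqueness clause in \cref{std-involution}. You have simply spelled out more of the details (normality of $\A{n}$, independence of the choice of transposition) than the paper does.
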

\begin{proof}
Since the action of a transposition on any $\S{n}$-invariant element is trivial, the map $\tau\colon \discalg{A}\to\discalg{A}$ is a well-defined $R$-algebra involution on $\discalg{A}$. 
And for any element $x$ of $\fixpower{A}{n}{\A{n}}$, we have that $x\cdot\tau(x)$ is $\S{n}$-invariant, and so is sent to an element of $R$ in $\discalg{A}$. 
Therefore $\tau$ is indeed the standard involution on $\discalg{A}$.
\end{proof}

We can now prove that the exact sequence of \cref{exact-sequence} identifies the discriminant forms of $A$ and $\discalg{A}$.

\begin{proof}[Proof of \cref{exact-sequence}(b)]
The composite isomorphism $\extpower^n A\cong\discalg{A}/R \cong \extpower^2\discalg{A}$ sends an element $a_1\wedge\cdots\wedge a_n$ of $\extpower^n A$ to $1\wedge \dot\gamma^{\A{n}}(a_1,\ldots, a_n)$ in
$\extpower^2\discalg{A}$. 
Given $a = (a_1,\ldots, a_n)$ and $b = (b_1,\ldots,b_n)$ in $A^n$ we have
\[
\delta_{\discalg{A}}\bigl(1\wedge\dot\gamma^{\A{n}}(a), 1\wedge\dot\gamma^{\A{n}} (b)\bigr) = \det \begin{pmatrix}
\trace_{\discalg{A}}(1) & \trace_{\discalg{A}}\bigl(\dot\gamma^{\A{n}}(b)\bigr) \\
\trace_{\discalg{A}}\bigl(\dot\gamma^{\A{n}}(a)\bigr) & \trace_{\discalg{A}}\bigl(\dot\gamma^{\A{n}}(a) \dot\gamma^{\A{n}}(b)\bigr)
\end{pmatrix}
\]
which is equal to
\[
2 \trace_{\discalg{A}}(\dot\gamma^{\A{n}}(a)\dot\gamma^{\A{n}}(b)) - \trace_{\discalg{A}}(\dot\gamma^{\A{n}}(a))\trace_{\discalg{A}}(\dot\gamma^{\A{n}}(b)).
\]
By \cref{involution-on-Delta}, the standard involution of $\discalg{A}$ is the one arising from the action of a transposition on the tensor factors of $A^{\otimes n}$, so we may compute these traces as sums:
\[
\begin{split}
\delta_{\discalg{A}} (1\wedge \dot\gamma^{\A{n}}(a), 1 \wedge
\dot\gamma^{\A{n}}(b)) &= 
2 \bigl(\dot\gamma^{\A{n}}(a) \dot\gamma^{\A{n}}(b) +
\dot\gamma^{\Abar{n}}(a)\dot\gamma^{\Abar{n}}(b)\bigr) \\
&\qquad - \bigl(\dot\gamma^{\A{n}}(a) + \dot\gamma^{\Abar{n}}(a)\bigr)
\bigl(\dot\gamma^{\A{n}}(b) + \dot\gamma^{\Abar{n}}(b)\bigr) \\
&= \bigl(\dot\gamma^{\A{n}}(a) - \dot\gamma^{\Abar{n}}(a)\bigr) 
\bigl(\dot\gamma^{\A{n}}(b) - \dot\gamma^{\Abar{n}}(b)\bigr)
\end{split}
\]
which is equal to $\delta_{A}(a_1\wedge \cdots \wedge a_n, b_1\wedge\cdots\wedge b_n)$ by \cref{relation-with-discriminant-form}. 
So the given isomorphism identifies the discriminant forms of $A$ and $\discalg{A}$, as we wanted to show.
\end{proof}

\section{Examples of discriminant algebras}\label{section-examples}

To show that computing $\discalg{A}$ for $A$ an algebra of rank $n$ is a 
straightforward process, in this section we will exhibit some examples of 
discriminant algebras and how to compute them. The simplest case, the 
discriminant algebra of a quadratic (i.e.\ rank-$2$) algebra, is reassuring but not very 
illuminating.

\begin{proposition}\label{discriminant-quad}
 Let $R$ be a ring and $A$ a quadratic $R$-algebra. 
 Then the $R$-algebra homomorphism $A\to \discalg{A}$ sending $a\mapsto\dot\gamma^{\A{2}}(1,a)$ is an isomorphism.
\end{proposition}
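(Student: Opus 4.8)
The plan is to recognize the map $\phi\colon A\to\discalg{A}$, $a\mapsto\dot\gamma^{\A{2}}(1,a)$, as a morphism between two short exact sequences — the tautological $0\to R\to A\to A/(R\cdot 1_A)\to 0$ and the sequence of \cref{exact-sequence} for $\discalg{A}$ — and then to apply the five lemma.

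First I would check that $\phi$ is an $R$-algebra homomorphism. Since $n=2$ the group $\A{2}$ is trivial, so $\gamma^{\A{2}}(1,a)=1\otimes a$ and $\phi$ is the composite of the second-factor inclusion $A\to A\otimes_R A=\fixpower{A}{2}{\A{2}}$, which is a ring map, with the canonical surjection $\fixpower{A}{2}{\A{2}}\onto\discalg{A}$; alternatively \cref{multiplication-on-Delta} gives directly $\gamma^{\A{2}}(1,a)\gamma^{\A{2}}(1,b)=\gamma^{\A{2}}(1,ab)$ and $\gamma^{\A{2}}(1,1)=1$. In particular $\phi$ is unital and $\phi(r\cdot 1_A)=r\cdot 1$, so on the submodule $R\cdot 1_A\subseteq A$ the map $\phi$ agrees with the structure map $R\to\discalg{A}$, which by \cref{exact-sequence-description} is the left-hand arrow of the short exact sequence
\[0\to R\to\discalg{A}\xrightarrow{\pi}\extpower^2 A\to 0.\]
Moreover, by the explicit description of $\pi$ given right after the statement of \cref{exact-sequence}, the composite $\pi\circ\phi\colon A\to\extpower^2 A$ is $a\mapsto 1\wedge a$.

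I would then assemble the commutative diagram with exact rows
\[\begin{tikzcd}
0\arrow{r} & R\arrow{r}\arrow{d}{\id} & A\arrow{r}\arrow{d}{\phi} & A/(R\cdot 1_A)\arrow{r}\arrow{d} & 0\\
0\arrow{r} & R\arrow{r} & \discalg{A}\arrow{r}{\pi} & \extpower^2 A\arrow{r} & 0,
\end{tikzcd}\]
whose top row is exact once $R\to A$, $r\mapsto r\cdot 1_A$, is known to be injective (locally $1_A$ is nonzero in the residue fibre $A\otimes\kappa(\mathfrak p)$, hence extends to an $R_{\mathfrak p}$-basis), the rest being definitional, and whose right-hand vertical is the map induced by $a\mapsto 1\wedge a$ (it kills $R\cdot 1_A$). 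By the five lemma, $\phi$ is an isomorphism as soon as the right-hand vertical map $A/(R\cdot 1_A)\to\extpower^2 A$ is, and since both sides commute with localization this can be checked with $A$ free, say on $(1,\omega)$: then $A/(R\cdot 1_A)$ is free on the class of $\omega$, $\extpower^2 A$ is free on $1\wedge\omega$, and the map carries generator to generator.

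I do not expect a genuine obstacle here; granting \cref{exact-sequence}, the proposition is diagram-chasing, and the only points requiring a moment's care are the well-definedness of $\phi$ as an algebra map (immediate since $\A{2}$ is trivial) and the standard identification $A/(R\cdot 1_A)\cong\extpower^2 A$ for quadratic $A$. If one prefers to avoid the five lemma: \cref{exact-sequence-description} shows that, locally with $A$ free on $(1,\omega)$, the algebra $\discalg{A}$ is spanned over $R$ by $1=\phi(1_A)$ and $\dot\gamma^{\A{2}}(1,\omega)=\phi(\omega)$, so $\phi$ is surjective and therefore an isomorphism, being a surjection between finite projective $R$-modules of the same rank; applying $\phi$ to $\omega^2=\trace_A(\omega)\,\omega-\norm_A(\omega)$ then identifies $\discalg{A}$ with $R[X]/\bigl(X^2-\trace_A(\omega)X+\norm_A(\omega)\bigr)$ on the nose.
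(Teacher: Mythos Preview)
Your argument is correct and is essentially the paper's own proof: both factor $\phi$ through $A\to A^{\otimes 2}=\fixpower{A}{2}{\A{2}}\to\discalg{A}$ to see it is an algebra map, and both conclude by the Five Lemma using the short exact sequence of \cref{exact-sequence}. The only cosmetic difference is that the paper places $\extpower^2 A$ directly in the top row (via $a\mapsto 1\wedge a$) rather than passing through $A/(R\cdot 1_A)$ first, which amounts to the same verification you carry out for the right-hand vertical map.
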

\begin{proof}
 That this is an $R$-algebra homomorphism is evident from its factorization as
 \[
 A\to A^{\otimes 2} = \fixpower{A}{2}{\A{2}} \to \discalg{A}
 \]
 sending $a\mapsto 1\otimes a=\gamma^{\A{2}}(1,a)\mapsto \dot\gamma^{\A{2}}(1,a)$. 
That it is an isomorphism then follows by the Five Lemma from the following commuting diagram:
 \[
\xymatrix{
0 \ar[r]& R \ar@{=}[d]\ar[r] & A \ar[d]\ar[r] & \extpower^2 A \ar@{=}[d] \ar[r] 
&
0 \\
0 \ar[r] & R \ar[r] & \discalg{A} \ar[r] & \extpower^2 A \ar[r] & 0,
}
\]

where the bottom row is the short exact sequence of \cref{exact-sequence}, and the map $A\to\extpower^2 A$ in the top row sends $a$ to $1\wedge a$. 
The left-hand square commutes because $A\to\discalg{A}$ is an $R$-algebra homomorphism, and the right-hand square commutes because the composite $A\to \discalg{A}\to\extpower^2 A$ also sends $a\mapsto\dot \gamma^{\A{2}}(1,a) \mapsto 1\wedge a$.
\end{proof}

The simplest non-trivial example of a discriminant algebra will therefore be that of a rank-$3$ algebra. 
The following proposition is quite useful for computing discriminant algebras of algebras that are free as modules.

\begin{proposition}\label{discriminant-free}
 Let $R$ be a ring, and suppose $A$ is an $R$-algebra that is free of rank $n\geq 2$ as an $R$-module, with $R$-basis $\theta=(\theta_1,\dots,\theta_n)$.  
 Then $\discalg{A}$ is also free as an $R$-module, with $R$-basis $\{1,\dot\gamma^{\A{n}}(\theta)\}$.
\end{proposition}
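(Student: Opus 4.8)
The plan is to exploit the short exact sequence of $R$-modules from \cref{exact-sequence-description},
\[
0\to R\to \discalg{A}\to \extpower^n A\to 0,
\]
together with the explicit description of its maps. By hypothesis $A$ is free with basis $\theta=(\theta_1,\dots,\theta_n)$, so $\extpower^n A$ is free of rank $1$ with basis the single element $\theta_1\wedge\dots\wedge\theta_n$. The left-hand map sends $1\in R$ to $1\in\discalg{A}$, and the right-hand map sends $\dot\gamma^{\A{n}}(\theta_1,\dots,\theta_n)$ to $\theta_1\wedge\dots\wedge\theta_n$, as recorded just after the statement of \cref{exact-sequence}.

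The key step is then the standard splitting argument: since $\extpower^n A$ is free (hence projective), the short exact sequence splits, so $\discalg{A}$ is free as an $R$-module of rank $1+1=2$. To pin down the asserted basis, I would argue that $\{1,\dot\gamma^{\A{n}}(\theta)\}$ generates and is independent. Independence: an $R$-linear relation $r\cdot 1 + s\cdot\dot\gamma^{\A{n}}(\theta)=0$ maps under $\discalg{A}\to\extpower^n A$ to $s\cdot(\theta_1\wedge\dots\wedge\theta_n)=0$, forcing $s=0$ since $\extpower^n A$ is free on that element; then $r\cdot 1 = 0$ in $\discalg{A}$ forces $r=0$ since $R\to\discalg{A}$ is injective. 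Generation: given $d\in\discalg{A}$, its image in $\extpower^n A$ is $s\cdot(\theta_1\wedge\dots\wedge\theta_n)$ for a unique $s\in R$, so $d - s\cdot\dot\gamma^{\A{n}}(\theta)$ lies in the kernel $R\cdot 1$, hence equals $r\cdot 1$ for some $r\in R$, giving $d = r\cdot 1 + s\cdot\dot\gamma^{\A{n}}(\theta)$.

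I do not expect a serious obstacle here: every needed ingredient—exactness, the injectivity of $R\to\discalg{A}$, the freeness of $\extpower^n A$ with the displayed generator, and the explicit formulas for the two maps—is already available from \cref{exact-sequence} and its surrounding discussion. The only mild care needed is to make sure we are using the \emph{specific} generator $\dot\gamma^{\A{n}}(\theta)$ (i.e.\ $\dot\gamma^{\A{n}}(\theta_1,\dots,\theta_n)$) rather than an arbitrary preimage of $\theta_1\wedge\dots\wedge\theta_n$, but that is exactly what the post-statement description of the sequence provides. Thus the proof is essentially a one-line invocation of the splitting of a short exact sequence with free quotient, together with a routine basis check.
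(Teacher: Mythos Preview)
Your proposal is correct and takes essentially the same approach as the paper: both use the short exact sequence of \cref{exact-sequence-description}, note that $\extpower^n A$ is free of rank $1$ on $\theta_1\wedge\dots\wedge\theta_n$, and conclude that $\{1,\dot\gamma^{\A{n}}(\theta)\}$ is a basis as the union of a basis for $R$ and a lift of a basis for $\extpower^n A$. The paper compresses your generation and independence checks into a single sentence, but the argument is the same.
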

\begin{proof}
 Note that if $A$ is free with $R$-basis $(\theta_1,\dots,\theta_n)$, then $\extpower^n A$ is free of rank $1$, with generator $\theta_1\wedge\dots\wedge\theta_n$. 
 Then from the exact sequence of \cref{exact-sequence}, we find that $\{1,\dot\gamma^{\A{n}}(\theta)\}$ is an $R$-basis for $\discalg{A}$, because it is the disjoint union of an $R$-basis for $R$ and a lifting of an $R$-basis for $\extpower^n A$.
\end{proof}

\begin{remark}\label{discalg-generators}
 More generally, if $\Theta\subset A^n$ is a set of tuples such that
 \[\{\theta_1\wedge\dots\wedge \theta_n : (\theta_1,\dots,\theta_n)\in\Theta\}\]
 generates $\extpower^n A$ as an $R$-module, then $\discalg{A}$ is generated as an $R$-module by $1$ together with
 \[\{\dot\gamma^{\A{n}}(\theta_1,\dots,\theta_n) : (\theta_1,\dots,\theta_n)\in\Theta\}.\]
\end{remark}

\begin{remark}
In the setting of \cref{discriminant-free}, note that since $\dot\gamma^{\A{n}}(\theta)$ has trace $\dot\gamma^{\A{n}}(\theta)+\dot\gamma^{\Abar{n}}(\theta)$ and norm $\dot\gamma^{\A{n}}(\theta)\dot\gamma^{\Abar{n}}(\theta)$ by 
\cref{involution-on-Delta}, we find that 
\[
\discalg{A}\cong R[X]/(X^2 - (\dot\gamma^{\A{n}}(\theta)+\dot\gamma^{\Abar{n}}(\theta))X + (\dot\gamma^{\A{n}}(\theta)\dot\gamma^{\Abar{n}}(\theta))).
\]
Note that this quadratic polynomial in $X$ has discriminant
\begin{align*}
 \bigl(\dot\gamma^{\A{n}}(\theta)+\dot\gamma^{\Abar{n}}(\theta)\bigr)^2 - 4\bigl(\dot\gamma^{\A{n}}(\theta)\dot\gamma^{\Abar{n}}(\theta)\bigr) &= \bigl(\dot\gamma^{\A{n}}(\theta)-\dot\gamma^{\Abar{n}}(\theta)\bigr)^2 \\
  &= \disc{A}(\theta_1\wedge\dots\wedge\theta_n,\theta_1\wedge\dots\wedge\theta_n),
\end{align*}
the discriminant of $A$ with respect to the basis $\theta$.

As an immediate consequence, we find that the discriminant of $A$ with respect to the $R$-basis $\theta$ is a square in $R/(4)$.
This generalizes Stickelberger's theorem that the discriminant of a number field is always congruent to $0$ or $1$ modulo $4$.
\end{remark}

\begin{example}
Let $R$ be a ring, and let $E$ be a locally-free $R$-module of rank $n\geq 1$.
Then $R\oplus E$ can be given an $R$-algebra structure in which $(1,0)$ is the multiplicative identity and $E$ is a square-zero ideal.
Its discriminant algebra is then
\[\discalg{R\oplus E}\cong R\oplus\extpower^n E,\]
again with multiplicative identity $(1,0)$ and $\extpower^n E$ a square-zero ideal.

We show this by proving that there is a unique $R$-module homomorphism $\discalg{R\oplus E}\to R\oplus \extpower^n E$ sending $1\mapsto(1,0)$ and $\dot\gamma^{\A{n+1}}(1,e)\coloneqq \dot\gamma^{\A{n+1}}(1,e_1,\dots,e_n)\mapsto (0,e_1\wedge\dots\wedge e_n)$ for all tuples $e=(e_1,\dots,e_n)\in E^n$.
We will then show that this homomorphism is an isomorphism, and that the induced multiplication on $R\oplus \extpower^n E$ is the desired one.

Uniqueness is clear, as such elements generate $\discalg{R\oplus E}$ by \cref{discalg-generators}.
For existence, then, it suffices to check locally, when $E$ is free with basis $\theta = (\theta_1,\dots,\theta_n)$.
Then $\discalg{R\oplus E}$ is free with basis $\bigl(1, \dot\gamma^{\A{n+1}}(1,\theta)\bigr)$ and $R\oplus\extpower^n E$ is free with basis $\bigl((1,0), (0,\theta_1\wedge\dots\wedge\theta_n)\bigr)$, so we may define an $R$-module homomorphism $\discalg{R\oplus E}\to R\oplus \extpower^n E$ sending $1\mapsto (1,0)$ and 
\[\dot\gamma^{\A{n+1}}(1,\theta)\mapsto(0,\theta_1\wedge\dots\wedge\theta_n).\]
We claim that this homomorphism has the desired property, that
\[\dot\gamma^{\A{n+1}}(1,e)\mapsto (0,e_1\wedge\dots\wedge e_n)\]
for all $e = (e_1,\dots,e_n)\in E^n$.
We check this first in case $e = \theta_\sigma$, that is, the $e_i$ are a permutation of the $\theta_i$---see the discussion after \cref{multiplication-on-Delta}.
If $\sigma$ is even, then $\dot\gamma^{\A{n+1}}(1,e) = \dot\gamma^{\A{n+1}}(1,\theta)$ and $e_1\wedge\dots\wedge e_n = \theta_1\wedge\dots\wedge\theta_n$, and we are done.
If $\sigma$ is odd, then $\dot\gamma^{\A{n+1}}(1,e) = \dot\gamma^{\Abar{n+1}}(1,\theta)$, and what we instead must show is that
\[\dot\gamma^{\Abar{n+1}}(1,\theta)\mapsto(0, -\theta_1\wedge\dots\wedge \theta_n).\]
We show this by proving that the sum of $\dot\gamma^{\A{n+1}}(1,\theta)$ and $\dot\gamma^{\Abar{n+1}}(1,\theta)$ is zero, and so the image of the latter is the negative of the image of the former.

Now we can compute $\dot\gamma^{\A{n+1}}(1,\theta)+\dot\gamma^{\Abar{n+1}}(1,\theta) = \dot\gamma^{\S{n+1}}(1,\theta) = \ferrand_{A}\bigl(\gamma^{\S{n+1}}(1,\theta)\bigr)$ as the coefficient of $\lambda_0\lambda_1\cdots\lambda_n$ in the norm of $\lambda_0+\sum_{i=1}^n\lambda_i\theta_i$.
This element acts via the following matrix:
\[\begin{pmatrix}
 \lambda_0 & 0 & 0 & \dots & 0\\
 \lambda_1 & \lambda_0 & 0 & \dots & 0\\
 \lambda_2 & 0 & \ddots & \ddots & \vdots\\
 \vdots & \vdots & \ddots & \lambda_0 & 0\\
 \lambda_n & 0 & \dots & 0 & \lambda_0
\end{pmatrix}\]
This matrix is lower-triangular and has $\lambda_0$s on the diagonal, so the determinant is $\lambda_0^{n+1}$, in which the coefficient of $\lambda_0\lambda_1\cdots\lambda_n$ is zero, as desired.
Thus the homomorphism we have defined does indeed send $\dot\gamma^{\A{n+1}}(1,e)$ to $(0,e_1\wedge\dots\wedge e_n)$ whenever the $e_i$ are a permutation of the $\theta_i$.

Now we check that this holds for general $e$.
Write $e_i = \sum_{j} m_{ij} \theta_j$ for some $m_{ij}\in R$.
Then we have 
\begin{align*}
 \dot\gamma^{\A{n+1}}(1, e) &= \dot\gamma^{\A{n+1}}\Bigl(1,\sum_{j=1}^n m_{1j}\theta_j,\dots,\sum_{j=1}^n m_{nj}\theta_j\Bigr)\\
 &= \sum_{f\colon \set{n}\to\set{n}}\Bigl(\prod_{i=1}^n m_{if(i)}\Bigr)\dot\gamma^{\A{n+1}}(1, \theta_{f(1)},\dots,\theta_{f(n)}).
\end{align*}
We claim that the terms with $f$ not injective equal zero.
Indeed, if $f$ is not injective then $\gamma^{\A{n+1}}(1,\theta_{f(1)},\dots,\theta_{f(n)})$ is $\S{n+1}$-invariant and equals $\gamma^{\alpha}(1,\theta_1,\dots,\theta_n)$ for some $\alpha\in\set[0]{n}^{n+1}/\S{n+1}$.
But then its image in $\discalg{A}$ is the coefficient of $\lambda^{\alpha}$ in the norm of $\lambda_0 + \sum_{i=1}^n\lambda_i\theta_i$, which vanishes unless $\alpha = \{(0,0,\dots,0)\}$, but this case cannot arise.
Therefore
\begin{align*}
 \dot\gamma^{\A{n+1}}(1, e) &= \sum_{\sigma\in\S{n}}\Bigl(\prod_{i=1}^n m_{i\sigma(i)}\Bigr)\dot\gamma^{\A{n+1}}(1, \theta_{\sigma(1)},\dots,\theta_{\sigma(n)})\\
 &\mapsto(0, \sum_{\sigma\in\S{n}}\Bigl(\prod_{i=1}^n m_{i\sigma(i)}\Bigr)\sgn(\sigma)\ \theta_1\wedge\dots\wedge\theta_n)\\
 &= (0, \det\bigl(m_{ij}\bigr)_{ij}\ \theta_1\wedge\dots\wedge\theta_n)\\
 &= (0, e_1\wedge\dots\wedge e_n)
\end{align*}
as desired.
Thus we have an $R$-module homomorphism $\discalg{R\oplus E}\to R\oplus \extpower^n E$ sending $\dot\gamma^{\A{n+1}}(1,e)$ to $(0, e_1\wedge\dots\wedge e_n)$, and since it is an isomorphism locally, it is an isomorphism of $R$-modules.

Then all that remains to check is that the induced $R$-algebra structure on $R\oplus\extpower^n E$ is the indicated one. 
Now as long as $n\geq 2$, every term in a product of the form \[\dot\gamma^{\A{n+1}}(1,e_1,\dots,e_n)\dot\gamma^{\A{n+1}}(1,f_1,\dots,f_n)\] vanishes because every product $e_i f_j = 0$.
Thus the multiplication on $R\oplus\extpower^n E$ sets $(e_1\wedge\dots\wedge e_n)(f_1\wedge\dots\wedge f_n)=0$ as desired.
If we are in the case $n=1$, then $R\oplus E$ is a rank-$2$ algebra and we have $\discalg{R\oplus E}\cong R\oplus E \cong R\oplus\extpower^1 E$ anyway.
\end{example}

\begin{example}\label{discriminant-monogenic-cubic}
Let $R$ be a ring and $A$ be an $R$-algebra of rank $n$ that can be generated by a single element $a$.
Then if $p_a(\lambda)$ is the characteristic polynomial of $a$, we have $A\cong R[X]/(p_a(X))$. 
In particular, $\{1,a,\dots,a^{n-1}\}$ is an $R$-basis of $A$. 
So we find that $\{1,\dot\gamma^{\A{n+1}}(1,a,\dots,a^{n-1})\}$ is an $R$-basis of $\discalg{A}$.

For example, if $n=3$ and $p_a(X) = X^3-sX^2+tX-u$, then we have an $R$-basis for $\discalg{A}$ given by $\{1, \dot\gamma^{\A{3}}(1,a,a^2)\}$. 
We can compute the trace and norm of the generator $\dot\gamma^{\A{3}}(1,a,a^2)$ as follows: 
for the trace, we need only a small calculation
\begin{align*}
\trace_{\discalg{A}}(\dot\gamma^{\A{3}}(1,a,a^2)) &=  \ferrand_{A}(\gamma^{\S{3}}(1,a,a^2)) \\
&= \ferrand_{A}(e_1(a)e_2(a) - 3 e_3(a))\\
&= s_1(a)s_2(a) -3s_3(a)\\
&= st-3u.
\end{align*}
For the norm, we have
\begin{align*}
  \norm_{\discalg{A}}(\dot\gamma^{\A{3}}(1,a,a^2)) &= \dot\gamma^{\A{3}}(1,a,a^2) \dot\gamma^{\Abar{3}}(1,a,a^2)\\
  &= \dot\gamma^{\A{3}}(1,a^3,a^3) + \dot\gamma^{\A{3}}(a^2,a^2,a^2) + \dot\gamma^{\A{3}}(a,a,a^4).
\end{align*}
(The reader may check this expansion by hand, or appeal to \cref{multiplication-on-Delta}.)
We have $\dot\gamma^{\A{3}}(a^2,a^2,a^2) = 3u^2$, as this is just three times the norm of $a^2$. 
Moreover, we have that $\gamma^{\A{3}}(a,a,a^4)$ is equal to $e_3(a)\gamma^{\A{3}}(1,1,a^3)$ so that $\dot\gamma^{\A{3}}(a,a,a^4)=u\dot\gamma^{\A{3}}(1,1,a^3)$. 
Moreover $\dot\gamma^{\A{3}}(1,1,a^3)$ and $\dot\gamma^{\A{3}}(1,a^3,a^3)$ are equal to $s_1(a^3)$ and $s_2(a^3)$ 
respectively. 
Hence, we just need to compute $s_1(a^3)$ and $s_2(a^3)$. 
The action of $a^3$ with respect to the basis $(1,a,a^2)$ is given by the matrix 
\[
\begin{pmatrix}
  0 & 0 & u \\
  1 & 0 & -t \\
  0 & 1 & s
 \end{pmatrix}^3 = \begin{pmatrix}
  u & su & s^2u-tu \\
  -t & u-st & su-s^2t+t^2 \\
   s & s^2-t & u-2st+s^3
 \end{pmatrix},
\]
which has trace $s_1(a^3)=3u-3st+s^3$ and quadratic trace $s_2(a^3) = 3u^2-3stu+t^3$. 
So we obtain
 \begin{align*}
  \norm_{\discalg{A}}(\dot\gamma^{\A{3}}(1,a,a^2)) &= s_2(a^3) + 3u^2 + us_1(a^3)\\
  &= (3u^2 - 3stu + t^3) + 3u^2 + u(3u-3st+s^3)\\
  &= 9u^2-6stu+t^3+s^3u.
 \end{align*}
 Therefore we have the following algebra presentation:
\[
\discalg{A}\cong R[X]/(X^2 - (st-3u)X + (9u^2-6stu+t^3+s^3u)).
\]
\end{example}


\begin{example}
Consider the cyclotomic extension of rings of integers $\Z[\zeta]\cong\Z[x]/(x^4+x^3+x^2+x+1)$ over $\Z$ for $\zeta$ a primitive $5$th root of unity.
The discriminant of this $\Z$-algebra is 
\[
\left|
\begin{matrix}
\trace_K(1) & \trace_K(\zeta) & \trace_K(\zeta^2) & \trace_K(\zeta^3) \\
\trace_K(\zeta) & \trace_K(\zeta^2) & \trace_{K}(\zeta^3) & \trace_{K}(\zeta^4) \\
\trace_{K}(\zeta^2) & \trace_{K}(\zeta^3) & \trace_{K}(\zeta^4) & \trace_{K}(1) \\
\trace_{K}(\zeta^3) & \trace_{K}(\zeta^4) & \trace_{K}(1) & \trace_{K}(\zeta) \\
\end{matrix}
\right|
=
\left|\begin{matrix}
4 & -1 & -1 & -1 \\
-1 & -1 & -1 & -1 \\
-1 & -1 & -1 & 4 \\
-1 & -1 & 4 & -1 \\
\end{matrix}\right|
= 5^3.
\]
Adjoining a square root of the discriminant to $\Z$, we obtain a naive discriminant algebra $\discalgs{naive}{O_K/\Z} = \Z[x]/(x^2-125)$. 
But the discriminant of this quadratic $\Z$-algebra is $(0)^2-4(-125) = 2^2\cdot 5^3$, which is \emph{not} equivalent to that of $O_K/\Z$, since $2^2$ is not the square of a unit in $\Z$.

Let us see instead what our construction $\discalg{O_K/\Z}$ gives.
The $\Z$-basis $\{1,\zeta,\zeta^2,\zeta^3\}$ for $\Z[\zeta]$ gives a basis $\{1,\dot\gamma^{\A{4}}(1,\zeta,\zeta^2,\zeta^3)\}$ for the discriminant algebra.
We compute the trace and norm of $\dot\gamma^{\A{4}}(1,\zeta,\zeta^2,\zeta^3)$ to give a presentation for this algebra:
\begin{align*}
 \trace_{\discalg{\Z[\zeta]}}\bigl(\dot\gamma^{\A{4}}(1,\zeta,\zeta^2,\zeta^3)\bigr) &= \dot\gamma^{\A{4}}(1,\zeta,\zeta^2,\zeta^3) + \dot\gamma^{\Abar{4}}(1,\zeta,\zeta^2,\zeta^3) \\
 &= \ferrand_{\Z[\zeta]}\bigl(\gamma^{\S{4}}(1,\zeta,\zeta^2,\zeta^3)\bigr),
\end{align*}
which the algorithm in \cref{ek-generators} expresses as
\begin{gather*}
 \ferrand_{\Z[\zeta]}\bigl(e_1(\zeta)e_1(\zeta^2)e_1(\zeta^3) - e_1(\zeta)e_1(\zeta^5) - e_1(\zeta^2)e_1(\zeta^4) - 2e_2(\zeta^3) + e_1(\zeta^6)\bigr)\\
 \begin{aligned}
 &= s_1(\zeta)s_1(\zeta^2)s_1(\zeta^3) - s_1(\zeta)s_1(\zeta^5)  - s_1(\zeta^2)s_1(\zeta^4) - 2s_2(\zeta^3) + s_1(\zeta^6)\\
 &= (-1)(-1)(-1) - (-1)(4) - (-1)(-1) - 2(1) + (-1) \\
 & = -1 + 4 - 1 - 2 - 1 = -1.
\end{aligned}
\end{gather*}
The norm we compute with the help of \cref{multiplication-on-Delta}:
\begin{gather*}
\norm_{\discalg{\Z[\zeta]}}\bigl(\dot\gamma^{\A{4}}(1,\zeta,\zeta^2,\zeta^3)\bigr) = \dot\gamma^{\A{4}}(1,\zeta,\zeta^2,\zeta^3) \dot\gamma^{\Abar{4}}(1,\zeta,\zeta^2,\zeta^3) \\
\begin{array}{rclcl}
 =& & \dot\gamma^{\A{4}}(\zeta,\zeta,\zeta^4,\zeta) &+& \dot\gamma^{\A{4}}(\zeta,\zeta^2,1,\zeta^4) \\
 & +& \dot\gamma^{\A{4}}(\zeta,\zeta^3,\zeta^3,1) &+& \dot\gamma^{\A{4}}(\zeta^2,1,1,1)\\
 & +& \dot\gamma^{\A{4}}(\zeta^2,\zeta^2,\zeta^2,\zeta) &+& \dot\gamma^{\A{4}}(\zeta^2, \zeta^3, \zeta^4, \zeta^3)\\
 & +& \dot\gamma^{\A{4}}(\zeta^3,1,\zeta^3,\zeta) &+& \dot\gamma^{\A{4}}(\zeta^3, \zeta, 1, \zeta^3)\\
 & +& \dot\gamma^{\A{4}}(\zeta^3, \zeta^3, \zeta^2, \zeta^4) &+& \dot\gamma^{\A{4}}(\zeta^4, 1, \zeta^4, \zeta^4)\\
 & +& \dot\gamma^{\A{4}}(\zeta^4, \zeta, \zeta^2, 1) &+& \dot\gamma^{\A{4}}(\zeta^4, \zeta^2, \zeta^3, \zeta^3).
 \end{array}
\end{gather*}
Each term with a repeated entry is easy to expand in terms of the $s_k(\zeta^j)$; for example:
\[\dot\gamma^{\A{4}}(\zeta,\zeta,\zeta^4,\zeta) = s_4(\zeta)\dot\gamma^{\A{4}}(1,1,\zeta^3,1) = 3s_4(\zeta)s_1(\zeta^3) = 3(1)(-1) = -3\]
and
\begin{align*}
\dot\gamma^{\A{4}}(\zeta,\zeta^3,\zeta^3,1) &= s_1(\zeta)s_2(\zeta^3) - s_1(\zeta^3)s_1(\zeta^4) + s_1(\zeta^7) \\
&= (-1)(1) - (-1)(-1) + (-1) = -3.
\end{align*}
In fact, all the terms equal $-3$ except for $\dot\gamma^{\A{4}}(\zeta,\zeta^2,1,\zeta^4)$ and $\dot\gamma^{\A{4}}(\zeta^4,\zeta,\zeta^2,1)$, whose sum when multiplied by $s_4(\zeta)=1$ gives $\trace\bigl(\dot\gamma^{\A{4}}(\zeta^2,\zeta^3,\zeta,1)\bigr)=-1$.
Thus the trace and norm of $\dot\gamma^{\A{4}}(1,\zeta,\zeta^2,\zeta^3)$ are $-1$ and $10(-3) - 1 = -31$, so
\[
\discalg{O_K/\Z} \cong \Z[x]/(x^2+x - 31),
\]
which does have discriminant $(1)^2-4(-31)=125$ equal to that of $O_K$.
\end{example}

 While theoretically well-behaved, sometimes the coefficients in a presentation of the discriminant algebra are hard to interpret.

\begin{example}
 Let $A$ be the rank-$n$ $\Z$-algebra $\Z[x]/(x^n-1)$ for $n\geq 2$.
 Then as in \cref{discriminant-monogenic-cubic} the discriminant algebra $\discalg{A/\Z}$ has basis $\{1,\dot\gamma^{\A{n}}(1,x,\dots,x^{n-1})\}$, and we can present the algebra structure of $\discalg{A/\Z}$ if we know the trace and norm of $\dot\gamma^{\A{n}}(1,x,\dots,x^{n-1})$.
  The trace is equal to $\ferrand_{A/\Z}\bigl(\gamma^{\S{n}}(1,x,\dots,x^{n-1})\bigr)$, which as in \cref{ferrand-example} we can compute as the coefficient of $\lambda_1\dots\lambda_n$ in the norm of $\lambda_1 + \lambda_2x + \dots + \lambda_n x^{n-1}$, which equals the determinant
 \[\left|\begin{matrix}
  \lambda_1 & \lambda_n & \lambda_{n-1} & \dots & \lambda_2\\
  \lambda_2 & \lambda_1 & \lambda_n & \ddots  & \vdots \\
  \lambda_3 & \lambda_2 & \ddots & \ddots & \lambda_{n-1} \\
  \vdots & \ddots & \ddots & \lambda_1 & \lambda_n\\
  \lambda_n & \dots & \lambda_3 & \lambda_2 & \lambda_1
 \end{matrix}\right|.\]
 This coefficient is equal to the permanent of Schur's matrix $(\zeta^{ij})_{i,j=1}^n$ for $\zeta$ a primitive $n$th root of unity; see \cite{GrahamLehmer} for this and the following facts.
 
 For small values of $n$, the trace of $\dot\gamma^{\A{n}}(1,x,\dots,x^{n-1})$ is shown in the following table:
 \[\begin{array}{c|ccccccccccc}
  n & 2 & 3 & 4 & 5 & 6 & 7 & 8 & 9 & 10 & 11\dots\\
  \hline
  \trace & 0 & -3 & 0 & -5 & 0 & 105 & 0 & 81 & 0 & 6765\dots
 \end{array}\]
 For even $n$, the trace is always zero; thus $\dot\gamma^{\A{n}}(1,x,\dots,x^{n-1})^2$ is in $\Z$ and equals one fourth of the discriminant, so
 \[\discalg{A/\Z} \cong \Z[x]/(x^2-\frac14(-1)^{n/2}n^n),\]
 the ``naive'' discriminant algebra if one remembers the factor of $1/4$.
 But if $n$ is odd, then the trace of $\dot\gamma^{\A{n}}(1,x,\dots,x^{n-1})$ is nonzero.
\end{example}

\section{Proofs of \cref*{mainthm-base-change,mainthm-2-unit,mainthm-etale}}\label{section-discalg-props}

In this section we demonstrate proofs of the other three main theorems from the introduction. 
First, we show that \cref{mainthm-base-change} holds, so that the operation $(R,A)\mapsto \discalg{A/R}$ commutes with base change.

\begin{theorem}\label{base-change}
Let $R$ be a ring, and let $A$ be an $R$-algebra of rank $n$ with $n\geq 2$. 
Let $R'$ be an $R$-algebra, and let $A' = R'\otimes A$, so that $A'$ is an $R'$-algebra of rank $n$. 
The natural $R'$-algebra isomorphism $R'\otimes \fixpower{A}{n}{\A{n}}\to \fixpower[R']{A'}{n}{\A{n}}$ descends to an $R'$-algebra isomorphism $R'\otimes \discalg{A/R}\to \discalg{A'/R'}$.
\end{theorem}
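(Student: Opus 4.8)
The plan is to deduce the statement from the two base-change facts already in hand: that the functor $\fixpower{(\cdot)}{n}{G}$ commutes with base change (\cref{fixpower-commutes-with-base-change}) and that the Ferrand homomorphism commutes with base change (the corollary recorded just after \cref{definition-ferrand}).

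First I would use the quotient description of the discriminant algebra: writing $\ferrand_{A/R}$ as the projection onto $\fixpower{A}{n}{\S{n}}/I$ with $I = (x - \ferrand_{A/R}(x) : x\in\fixpower{A}{n}{\S{n}})$, we have $\discalg{A/R} = \fixpower{A}{n}{\A{n}}/J$, where $J$ is the ideal of $\fixpower{A}{n}{\A{n}}$ generated by the elements $x - \ferrand_{A/R}(x)$. Since $R'\otimes_R -$ is right exact, applying it to $0\to J\to \fixpower{A}{n}{\A{n}}\to \discalg{A/R}\to 0$ identifies $R'\otimes_R\discalg{A/R}$ with the quotient of $R'\otimes_R\fixpower{A}{n}{\A{n}}$ by the ideal generated by the elements $1\otimes(x - \ferrand_{A/R}(x))$ for $x\in\fixpower{A}{n}{\S{n}}$.

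Next I would transport this along the natural $R'$-algebra isomorphism $R'\otimes_R\fixpower{A}{n}{\A{n}}\to\fixpower[R']{A'}{n}{\A{n}}$ of \cref{fixpower-commutes-with-base-change}. The same proposition with $G=\S{n}$ shows that this isomorphism carries the subalgebra $R'\otimes_R\fixpower{A}{n}{\S{n}}$ isomorphically onto $\fixpower[R']{A'}{n}{\S{n}}$, and in particular that $\fixpower[R']{A'}{n}{\S{n}}$ is $R'$-spanned by the images $y$ of the elements $1\otimes x$. Because the Ferrand homomorphism commutes with base change, each such $y$ satisfies $\ferrand_{A'/R'}(y) = 1\otimes\ferrand_{A/R}(x)$ after the identification $R'\otimes_R R\cong R'$; hence the generator $1\otimes(x-\ferrand_{A/R}(x))$ maps to $y - \ferrand_{A'/R'}(y)$. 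Since $y\mapsto y-\ferrand_{A'/R'}(y)$ is $R'$-linear and the $y$'s $R'$-span $\fixpower[R']{A'}{n}{\S{n}}$, these elements generate exactly the ideal $J'$ defining $\discalg{A'/R'} = \fixpower[R']{A'}{n}{\A{n}}/J'$. Thus the isomorphism carries one defining ideal onto the other and descends to the desired $R'$-algebra isomorphism $R'\otimes_R\discalg{A/R}\to\discalg{A'/R'}$.

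I do not expect a real obstacle here; the argument is formal once the two base-change inputs are invoked. The only point needing care is the bookkeeping that matches $\ferrand_{A'/R'}$ with the base change of $\ferrand_{A/R}$—which is exactly the corollary after \cref{definition-ferrand}—together with the observation that an $R'$-module generating set of $\fixpower[R']{A'}{n}{\S{n}}$ already yields an ideal-generating set for $J'$, by $R'$-linearity of $y\mapsto y-\ferrand_{A'/R'}(y)$. An alternative, more slickly phrased route is to observe $R'\otimes_R\discalg{A/R}\cong\fixpower{A}{n}{\A{n}}\midotimes_{\fixpower{A}{n}{\S{n}}}R'$ (with $\fixpower{A}{n}{\S{n}}\to R'$ being $\ferrand_{A/R}$ followed by $R\to R'$) and then use associativity of tensor products plus a pushout-pasting identity to rewrite $\discalg{A'/R'}$ the same way; but the quotient-by-ideal description has the advantage of exhibiting explicitly the map named in the statement.
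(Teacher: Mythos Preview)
Your proof is correct and uses exactly the same two inputs as the paper's proof: \cref{fixpower-commutes-with-base-change} and the fact that the Ferrand homomorphism commutes with base change. The paper simply invokes the tensor-product route you sketch at the end, whereas you unpack it via the quotient description; the content is the same.
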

\begin{proof}
We proved that the canonical homomorphism $R'\otimes \fixpower{A}{n}{\A{n}}\to \fixpower[R']{A'}{n}{\A{n}}$ is an isomorphism in \cref{fixpower-commutes-with-base-change}.
Then since the Ferrand homomorphism also commutes with base change, we obtain an isomorphism $R'\otimes \discalg{A/R}\cong \discalg{A'/R'}$ as desired.
\end{proof}

Second, we can immediately show that satisfying \cref{mainthm-2-unit}, i.e.\ the discriminant algebra having the right description when $2$ is a unit in the base ring, is a consequence of satisfying \cref{mainthm-identify-discriminants}, as a corollary of the following proposition:

\begin{proposition}
Let $R$ be a ring in which $2$ is invertible.
Let $D$ be an $R$-algebra of rank $2$.
Give the rank-$2$ $R$-module $R\oplus\extpower^2 D$ an $R$-algebra stucture by making $(1,0)$ the multiplicative identity and setting $(0,\xi)\cdot(0,\psi) = \frac14\disc{D}(\xi,\psi)$.
Then there is a unique isomorphism of $R$-algebras $D\simto R\oplus \extpower^2 D$ inducing the identity map on top exterior powers
\[\extpower^2 D \simto \extpower^2 (R\oplus \extpower^2 D) \cong (R\oplus \extpower^2 D)/R = \extpower^2 D;\]
the isomorphism is $d\mapsto (\frac12\trace_D(d), 1\wedge d)$.
\end{proposition}

\begin{proof}
 First we establish existence.
 Note that the isomorphism $D/R \simto \extpower^2 D$ sending the class of $d$ to $1\wedge d$ gives us an exact sequence
 \[0 \to R\to D\to \extpower^2 D \to 0,\]
 which is split by the homomorphism $D\to R\colon  d\mapsto \frac12\trace_D(d)$.
 This gives us the desired isomorphism of $R$-modules $D\cong R\oplus \extpower^2 D$ sending $d$ to $(\frac12\trace_D(d), 1\wedge d)$.
 Next we show that transporting the $R$-algebra structure from $D$ to $R\oplus\extpower^2 D$ gives the desired multiplication on $R\oplus\extpower^2 D$.
 The image of the multiplicative identity $1$ is $(\frac12\trace_D(1), 1\wedge 1) = (1,0)$, as desired.
 All that remains, then, is to check the multiplication on elements of the form $(0,\xi)$, that is, the images of trace-zero elements of $D$.
 Then let $d,e\in D$ have trace zero; we will show that $de = \frac14\disc{D}(1\wedge d,1\wedge e)$ in $D$.
 First, note that the squares of $d$, $e$, and $d+e$ are all in $R$: they are roots of their characteristic polynomials, which have no linear term.
 Therefore $de = \frac12((d+e)^2 - d^2 - e^2)$ is in $R$ as well.
 Then we may compute $\disc{D}(1\wedge d,1\wedge e)$ as
 \[
  \disc{D}(1\wedge d,1 \wedge e) = \det \begin{pmatrix}
  \trace_D(1) & \trace_D(d) \\
  \trace_D(e) & \trace_D(de)
  \end{pmatrix} = \det \begin{pmatrix}
  2 & 0 \\
  0 & 2de
 \end{pmatrix} = 4de,
\]
so we have $de = \frac14 \disc{D}(1\wedge d, 1\wedge e)$ as we wanted to show.
Thus the induced product on $R\oplus \extpower^2 D$ has $(0,\xi)\cdot(0,\psi) = \frac14\disc{D}(\xi,\psi)$, as claimed, and we have demonstrated existence of the desired algebra isomorphism.

Now we show that there is only one isomorphism with these properties.
It is sufficient to show that the only algebra automorphism of $D$ inducing the identity on $\extpower^2 D$ is itself the identity.
For this we work locally, so assume $D$ has a basis of the form $\{1,x\}$.
An automorphism of $D$ sends $x$ to an element $ux+r$ for some $r\in R$ and $u$ a unit of $R$.
If this is to descend to the identity on $\extpower^2 D$, we must have $u=1$.
And if this is an automorphism of $D$, we must have $\trace_D(x) = \trace_D(ux+r) = \trace_D(x+r) = \trace_D(x) + 2r$, so $2r=0$.
Since $2$ is a unit of $R$, we have $r=0$ and the automorphism is $\id_D$.
\end{proof}

\begin{corollary}\label{discalg-when-2-is-invertible}
 Let $R$ be a ring in which $2$ is a unit, and let $A$ be a rank-$n$ $R$-algebra.
 Put an $R$-algebra structure on $R\oplus \extpower^n A$ by making $(1,0)$ the multiplicative identity, and setting $(0,\xi)\cdot(0,\psi) =\frac14\disc{A}(\xi,\psi)$.
 Then there is a unique $R$-algebra isomorphism $\discalg{A}\cong R\oplus \extpower^n A$ such that the resulting isomorphism $\extpower^2\discalg{A} \cong \extpower^2(R\oplus\extpower^n A) \cong \extpower^n A$ is the one from \cref{exact-sequence}.
\end{corollary}

Note the extra factor of $\frac14$ in the definition of the multiplication, as compared with the statement of \cref{mainthm-2-unit} on page \pageref{mainthm-2-unit}. 
Since $2$ is a unit, the two multiplications yield isomorphic algebra structures on $R\oplus\extpower^n A$, but we see that the one given here is the one which arises naturally from the identification of the discriminant forms.
This is analogous to the factor of $\frac14$ one could introduce to the naive discriminant algebra $K[x]/(x^2-\frac14D)$, to make its discriminant $D$ instead of $4D$.

For the remainder of this section, we will suppose that $R$ is \emph{connected}, i.e.\ that it has exactly two idempotents $0$ and $1$,
so that we are working in the setting of \cref{mainthm-etale}.
A finite \'etale algebra over a connected ring $R$ is automatically an $R$-algebra of rank $n$ for some natural number $n$, and a $R$-algebra $A$ of rank $n$ is finite \'etale if and only if its discriminant form $\disc{A}\colon\extpower^n A\times\extpower^n A\to R$ is nondegenerate, in the sense that the induced $R$-module homomorphism 
\[
\extpower^n A\to \Hom(\extpower^n A,R)
\]
is an isomorphism. 
It is clear from \cref{mainthm-identify-discriminants}, then, that  $A$ is \'etale if and only if $\discalg{A/R}$ is \'etale, since their discriminant forms are isomorphic.
 
The significance of finite \'etale $R$-algebras is clarified by the following theorem, due to Grothendieck and proven in \cite[Ch. V, \sect7]{SGA1}:

\begin{theorem}\label{theorem-etale-equivalence}
Let $R$ be a connected ring equipped with a ring homomorphism to a separably closed field $K$. 
Then there is a profinite group $\fundgroup{R}$, called the \emph{fundamental group of $R$ (at $K$)}, such that for each finite \'etale $R$-algebra $A$, the finite set $F(A)=\Hom_{R\algs}(A,K)$ is naturally equipped with a continuous $\fundgroup{R}$-action. Furthermore, the assignment $A\mapsto F(A)$ defines a contravariant equivalence of categories
\[
F\colon R\finetalgs \to \fundgroup{R}\fingsets
\]
from the category of finite \'etale $R$-algebras to the category of $\fundgroup{R}$-sets, i.e.\ finite sets equipped with a continuous $\fundgroup{R}$-action.
\end{theorem}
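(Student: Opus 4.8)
This is Grothendieck's theorem on the \'etale fundamental group \cite[Ch.\ V]{SGA1}; the plan would be to exhibit the category $\mathcal{C} := R\finetalgs$ together with the fiber functor $F(A) := \Hom_{R\algs}(A,K)$ as a \emph{Galois category}, and then to apply Grothendieck's general classification of such categories. The genuine content lies in the first part; the classification is then formal.

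First I would establish the axioms of a Galois category: that every finite \'etale $R$-algebra is a finitely generated projective $R$-module; that $\mathcal{C}$ has finite limits and finite colimits, admits quotients by finite group actions, and that every morphism factors as a strict epimorphism followed by the inclusion of a direct factor cut out by an idempotent; and that the functor $F$ is exact, takes values in finite sets, and is conservative. The real input from \'etale algebra theory enters here: since $K$ is separably closed, $A\otimes_R K$ splits as a product of $\mathrm{rank}(A)$ copies of $K$, so $\#F(A) = \mathrm{rank}(A)$, a well-defined integer because $R$ is connected; and the exactness and conservativity of $F$ are proved by reduction to this separably closed fiber, where the local structure theory of \'etale morphisms makes everything transparent.

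Next I would show that $F$ is pro-represented by a cofiltered system $(P_\lambda)_\lambda$ of connected \emph{Galois} objects --- connected finite \'etale $R$-algebras with $\#\mathrm{Aut}_{R\algs}(P_\lambda) = \mathrm{rank}(P_\lambda)$, so that $\mathrm{Aut}_{R\algs}(P_\lambda)$ acts simply transitively on $F(P_\lambda)$. Each is constructed as the ``Galois closure'' of a connected finite \'etale algebra, namely the connected component --- singled out by the geometric point $R\to K$ --- of a sufficiently high iterated tensor power. One then \emph{defines} $\fundgroup{R} := \mathrm{Aut}(F) = \varprojlim_\lambda \mathrm{Aut}_{R\algs}(P_\lambda)^{\mathrm{op}}$, a profinite group for the limit topology, and checks that $F$ then naturally lifts to a functor valued in the category $\fundgroup{R}\fingsets$ of continuous finite $\fundgroup{R}$-sets.

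Finally I would prove that this lift $F\colon\mathcal{C}\to\fundgroup{R}\fingsets$ is an equivalence, necessarily contravariant since $F = \Hom_{R\algs}(-,K)$. Essential surjectivity: every transitive $\fundgroup{R}$-set is $\fundgroup{R}/H$ for some open subgroup $H$, which by pro-representability equals $F$ of a direct factor of some $P_\lambda$, and an arbitrary $\fundgroup{R}$-set is a finite disjoint union of transitive ones, matching the decomposition of a finite \'etale algebra as a product of connected ones. Full faithfulness: by pro-representability it reduces to the identity $\Hom_{\mathcal{C}}(P_\mu,P_\lambda)\xrightarrow{\sim}\Hom_{\fundgroup{R}}(F(P_\mu),F(P_\lambda))$, a direct computation with Galois objects. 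The main obstacle is the first part --- verifying that $F$ is exact and conservative and that Galois closures exist --- since that is where the substantive input (the structure of \'etale morphisms and the splitting of finite \'etale algebras over a separably closed point) is needed; once the Galois-category axioms are in hand, the remaining steps are purely formal consequences of Grothendieck's general theorem.
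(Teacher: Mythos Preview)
The paper does not prove this theorem at all: it states it as a known result ``due to Grothendieck and proven in \cite[Ch.~V, \sect7]{SGA1}'' and then adds a note pointing to \cite[Section~5]{GaloisSchemes} for a treatment without Noetherian hypotheses. Your proposal correctly sketches the standard Galois-category proof from those references, so it is faithful to the cited source, but it goes well beyond what the paper itself does---the paper simply quotes the result and moves on.
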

 
Note: Grothendieck's original formulation was for the fundamental group of locally Noetherian schemes, but the Noetherian hypothesis is not necessary; 
an excellent reference is \cite[Section 5]{GaloisSchemes}. Note also that the fundamental group $\fundgroup{R}$ implicitly depends on the choice of $K$;
different choices of $K$ yield fundamental groups that are isomorphic but not canonically so, a behavior which is analogous to the dependence of the 
topological fundamental group on a choice of base point. 
In the following, we will suppress the dependence of $\pi_R$ on $K$ whenever possible without causing confusion.

\begin{example}
If $R=K$ is a field and $K^s$ is its separable closure, then $\fundgroup{K}$ is naturally identified with the absolute Galois group of $K$, which acts continuously on $\Hom_{K\algs}(A,K^s)$ for each finite separable $K$-algebra $A$.
\end{example}

\begin{remark}\label{etale-correspondence-remark}
In general, \cref{theorem-etale-equivalence} tells us quite a lot about the finite $\fundgroup{R}$-set corresponding to a given finite \'etale $R$-algebra: 
\begin{enumerate}
\item For each finite \'etale $R$-algebra $A$, the rank of $A$ is the 
cardinality of $F(A)$.
Indeed, suppose that $A$ is locally free of rank $n$ as an $R$-module. 
Then as sets, $F(A)= \Hom_{R\algs}(A,K) \cong \Hom_{K\algs}(K\otimes_R A,K)$, but since $K\otimes_R A$ is a finite separable $K$-algebra of rank $n$, it is isomorphic to $K^n$ and there are exactly $n$ $K$-algebra homomorphisms $K^n \to K$. 
(In fact, the choice of isomorphism $K\otimes_R A\cong K^n$ amounts to a choice of bijection $\set{n}\simto F(A)$, so $K\otimes_R A$ is canonically isomorphic to $K^{F(A)}$ as a $K$-algebra.)
\item If $A_1,\dots,A_n$ are finite \'etale $R$-algebras, then $F(\prod_{i=1}^n A_i)$ is isomorphic to the disjoint union $\coprod_{i=1}^n F(A_i)$, and $F(\bigotimes_{i=1}^n A_i)$ to the product $\prod_{i=1}^n F(A_i)$, each with the induced $\fundgroup{R}$-action. 
This is just because a contravariant equivalence sends limits to colimits and vice versa, and products and tensor products of \'etale algebras are \'etale.
\item In particular, the zero $R$-algebra corresponds to the empty $\fundgroup{R}$-set, and $R$ itself to a singleton $\{\ast\}$ with the trivial $\fundgroup{R}$-action. 
Then for any finite set $S$, the \emph{trivial} \'etale $R$-algebra $R^S=\prod_{s\in S}R$ corresponds to a $\fundgroup{R}$-set isomorphic to the set $S$ equipped with the trivial $\fundgroup{R}$-action.
\item If $G$ is a finite group acting via $R$-algebra isomorphisms on a finite \'etale $R$-algebra $A$, then $G$ also acts naturally on the corresponding $\fundgroup{R}$-set $F(A)$. 
Furthermore, the $R$-algebra of $G$-invariants $A^G$ is also finite \'etale, and the corresponding $\fundgroup{R}$-set $F(A^G)$ is isomorphic to $F(A)/G$, the set of $G$-orbits of $F(A)$. 
(See \cite[Prop.\ 5.20]{GaloisSchemes}.)
\end{enumerate}
\end{remark}

With these remarks, we will see that the Ferrand homomorphism has an especially nice interpretation in the case of \'etale algebras:

\begin{proposition}\label{Ferrand-map-maps-to-bijections}
Let $R$ be a connected ring equipped with a ring homomorphism to a separably closed field $K$. 
Let $\fundgroup{R}$ be the fundamental group. 
Let $A$ be a finite \'etale $R$-algebra corresponding to an $n$-element $\fundgroup{R}$-set $X$ under the equivalence of \cref{theorem-etale-equivalence}. 
Then the Ferrand homomorphism $\ferrand_{A}\colon \fixpower{A}{n}{\S{n}}\to R$ corresponds to the map of $\fundgroup{R}$-sets $\{\ast\}\to X^n/\S{n}$ sending $\ast$ to the $\S{n}$-orbit of bijections $\set{n}\simto X$.
\end{proposition}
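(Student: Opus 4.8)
The plan is to base change everything to the separably closed field $K$ and then to pin down $\ferrand_A$ by identifying two $K$-algebra homomorphisms on a set of $K$-algebra generators.

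First I would reduce to the case $R = K$. For every finite \'etale $R$-algebra $B$ there is a canonical identification $F(B) = \Hom_{R\algs}(B,K) = \Hom_{K\algs}(K\otimes_R B, K)$, functorial in $B$; applying it to $\ferrand_A\colon\fixpower{A}{n}{\S{n}}\to R$ shows that the underlying set-map of $F(\ferrand_A)\colon F(R)\to F(\fixpower{A}{n}{\S{n}})$ sends the unique point of $F(R)=\{\ast\}$ to the $K$-algebra homomorphism obtained from $\ferrand_A$ by the functor $K\otimes_R(-)$. By \cref{fixpower-commutes-with-base-change} together with the fact that the Ferrand homomorphism commutes with base change, this homomorphism is, under the canonical isomorphisms, the Ferrand homomorphism $\ferrand_{A_K/K}\colon\fixpower[K]{A_K}{n}{\S{n}}\to K$ of $A_K:=K\otimes_R A$. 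As $F(\ferrand_A)$ is automatically $\fundgroup{R}$-equivariant, being $F$ applied to a map of $R$-algebras, it suffices to show that $\ferrand_{A_K/K}$, viewed as an element of $F(\fixpower[K]{A_K}{n}{\S{n}})\cong X^n/\S{n}$, is the orbit $O$ of bijections $\set{n}\simto X$.

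Next I would make the objects over $K$ explicit. By \cref{etale-correspondence-remark} there is a canonical $K$-algebra isomorphism $A_K\cong K^X=\Map(X,K)$, hence an $\S{n}$-equivariant isomorphism $A_K^{\otimes_K n}\cong K^{X^n}$ taking a pure tensor $f_1\otimes\dots\otimes f_n$ to $(x_1,\dots,x_n)\mapsto f_1(x_1)\cdots f_n(x_n)$; taking $\S{n}$-invariants yields $\fixpower[K]{A_K}{n}{\S{n}}\cong\Map(X^n/\S{n},K)$, compatibly with the identification $F(\fixpower[K]{A_K}{n}{\S{n}})\cong X^n/\S{n}$ of \cref{etale-correspondence-remark}. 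Under this isomorphism, the element $e_k(a)$ of $\fixpower[K]{A_K}{n}{\S{n}}$ attached to $a=(a_x)_{x\in X}\in A_K$ becomes the function sending the orbit of $(x_1,\dots,x_n)$ to the coefficient of $\lambda^k$ in $\prod_{i=1}^n(\lambda+a_{x_i})$, because that coefficient is the image of $(\lambda+a)\otimes\dots\otimes(\lambda+a)$. Both $\ferrand_{A_K/K}$ and the evaluation homomorphism $\mathrm{ev}_O\colon\Map(X^n/\S{n},K)\to K$ at the orbit $O$ are then $K$-algebra homomorphisms out of $\Map(X^n/\S{n},K)$.

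Finally I would check that the two homomorphisms agree on generators. Representing $O$ by a bijective enumeration $(x_1,\dots,x_n)$ of $X$, we get that $\mathrm{ev}_O(e_k(a))$ is the coefficient of $\lambda^k$ in $\prod_{i=1}^n(\lambda+a_{x_i})=\prod_{x\in X}(\lambda+a_x)$; since the norm on $K^X$ is the product of coordinates, $\prod_{x\in X}(\lambda+a_x)=\norm_{A_K[\lambda]/K[\lambda]}(\lambda+a)$, so this coefficient is $s_k(a)$, which equals $\ferrand_{A_K/K}(e_k(a))$ by \eqref{ek-sent-to-sk}. By \cref{ek-generators} the elements $e_k(a)$, as $a$ ranges over $A_K$ and $k$ over $\set{n}$, generate $\fixpower[K]{A_K}{n}{\S{n}}$ as a $K$-algebra, so $\ferrand_{A_K/K}=\mathrm{ev}_O$. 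Since $\mathrm{ev}_O$ is precisely the $K$-algebra homomorphism corresponding to the point $O\in X^n/\S{n}$, this proves the claim. I expect the main obstacle to be bookkeeping rather than mathematics: one must verify that the chain of canonical identifications---$F$ versus base change to $K$, $\fixpower{(-)}{n}{\S{n}}$ versus base change, $A_K\cong K^X$ versus the $\fundgroup{R}$-set $X$, and the quotient $X^n\to X^n/\S{n}$ versus $F$ of the inclusion $\fixpower{A}{n}{\S{n}}\into A^{\otimes n}$---are mutually compatible, so that ``evaluation at $O$'' genuinely names the point asserted in the statement; once the setup is in place the comparison with $\mathrm{ev}_O$ is the one-line computation above.
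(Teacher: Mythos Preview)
Your proof is correct and follows the same route as the paper: reduce by base change to $K$ so that $A$ becomes the split algebra $K^X$, identify $\fixpower[K]{A_K}{n}{\S{n}}$ with $K^{X^n/\S{n}}$, and recognize the Ferrand homomorphism as projection onto the coordinate indexed by the bijection orbit. The only difference is that the paper outsources the last step to \cite[Ex.\ 3.1.3(b)]{FonctNorme}, whereas you supply it directly by checking $\ferrand_{A_K/K}$ against $\mathrm{ev}_O$ on the generators $e_k(a)$ via \eqref{ek-sent-to-sk} and \cref{ek-generators}; your version is self-contained and the extra bookkeeping you flag about compatibility of the canonical identifications is exactly what one has to verify in either approach.
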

\begin{proof}
By changing base (to $K$ if necessary) we may assume that $A$ is the trivial \'etale algebra $R^X$.
Then Ferrand shows in \cite[Ex.\ 3.1.3(b)]{FonctNorme} that the Ferrand homomorphism $\fixpower{(R^X)}{n}{\S{n}}\cong R^{X^n/\S{n}}\to R$ is projection onto the coordinate indexed by the $\S{n}$-orbit of bijections $\set{n}\simto X$.
\end{proof}

Finally, we prove the main theorem of the section.

\begin{theorem}\label{discriminant-algebra-corresponds-to-orientations}
Let $R$ be a connected ring with fundamental group $\fundgroup{R}$, and let $A$ be a finite \'etale $R$-algebra corresponding to an $n$-element $\fundgroup{R}$-set $X$, with $n\geq 2$. 
Then the discriminant algebra $\discalg{A}$ is a finite \'etale $R$-algebra corresponding to the $2$-element $\fundgroup{R}$-set
\[
\Or(X) \coloneqq  \Bij(\set{n}, X)/\A{n}
\]
of \emph{orientations} on $X$.
\end{theorem}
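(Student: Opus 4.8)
The plan is to compute the $\fundgroup{R}$-set $F(\discalg{A})$ directly from the presentation $\discalg{A}=\fixpower{A}{n}{\A{n}}\otimes_{\fixpower{A}{n}{\S{n}}}R$, exploiting that the contravariant equivalence $F$ of \cref{theorem-etale-equivalence} turns this pushout of $R$-algebras into a fiber product of $\fundgroup{R}$-sets. First I would record that $\discalg{A}$ is finite \'etale: this was already observed just before the statement, since $A$ is \'etale, $\discalg{A}$ has rank $2$ by \cref{mainthm-identify-discriminants}, and the isomorphism of discriminant forms makes one nondegenerate iff the other is (equivalently, $\fixpower{A}{n}{\A{n}}$ is \'etale over $\fixpower{A}{n}{\S{n}}$, and $\discalg{A}$ is its base change along $\ferrand_A$), so it makes sense to speak of $F(\discalg{A})$. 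Next I would identify the constituent pieces. The algebra $A^{\otimes n}$ is \'etale and corresponds to $X^n$ by \cref{etale-correspondence-remark}(2), and since $\S{n}$ acts on $A^{\otimes n}$ by $R$-algebra automorphisms permuting tensor factors, functoriality of $F$ makes the corresponding $\S{n}$-action on $X^n$ the coordinate-permutation action; by \cref{etale-correspondence-remark}(4), $\fixpower{A}{n}{\S{n}}=(A^{\otimes n})^{\S{n}}$ corresponds to $X^n/\S{n}$, $\fixpower{A}{n}{\A{n}}=(A^{\otimes n})^{\A{n}}$ corresponds to $X^n/\A{n}$, and the inclusion $\fixpower{A}{n}{\S{n}}\into\fixpower{A}{n}{\A{n}}$ corresponds to the natural quotient $X^n/\A{n}\onto X^n/\S{n}$. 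Finally $R$ corresponds to the point $\{\ast\}$, and by \cref{Ferrand-map-maps-to-bijections} the Ferrand homomorphism $\ferrand_A\colon\fixpower{A}{n}{\S{n}}\to R$ corresponds to the map $\{\ast\}\to X^n/\S{n}$ sending $\ast$ to the $\S{n}$-orbit $[\Bij(\set{n},X)]$ of bijections.

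Assembling these, since $\discalg{A}$ is the pushout, in finite \'etale $R$-algebras, of the diagram $\fixpower{A}{n}{\A{n}}\hookleftarrow\fixpower{A}{n}{\S{n}}\to R$ whose legs are the inclusion and $\ferrand_A$, the equivalence $F$ carries it to the fiber product
\[
F(\discalg{A})\;\cong\;(X^n/\A{n})\times_{X^n/\S{n}}\{\ast\},
\]
that is, the preimage in $X^n/\A{n}$ of the point $[\Bij(\set{n},X)]\in X^n/\S{n}$ under the natural quotient. Since the $\S{n}$-orbit $[\Bij(\set{n},X)]$ is precisely the subset $\Bij(\set{n},X)\subset X^n$, that preimage is the image of $\Bij(\set{n},X)$ under $X^n\to X^n/\A{n}$, namely $\Bij(\set{n},X)/\A{n}=\Or(X)$, carrying the $\fundgroup{R}$-action inherited from $X$, which is exactly the natural action on $\Or(X)$. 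Because $\Bij(\set{n},X)$ is a torsor under $\S{n}$ (i.e.\ $\S{n}$ acts on it simply transitively) and $n\geq 2$, the quotient $\Bij(\set{n},X)/\A{n}$ has $[\S{n}:\A{n}]=2$ elements, in agreement with $\discalg{A}$ being of rank $2$.

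The main obstacle I expect is the careful bookkeeping behind the displayed identity: justifying that $F$ sends the algebra pushout to the set fiber product (which uses that this particular tensor product remains \'etale, together with the fact that a contravariant equivalence interchanges colimits and limits), and, more delicately, verifying that under $F$ the permutation of tensor factors really becomes the coordinate-permutation action on $X^n$ and that the two quotient maps $X^n\to X^n/\A{n}\to X^n/\S{n}$ and the point inclusion $\{\ast\}\to X^n/\S{n}$ are the evident ones. Once these identifications are in place, the rest is a short computation with $\S{n}$-orbits and torsors.
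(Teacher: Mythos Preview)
Your proposal is correct and follows essentially the same approach as the paper's proof: turn the tensor-product presentation of $\discalg{A}$ into a fiber-product diagram of $\fundgroup{R}$-sets via the equivalence $F$, identify the legs using \cref{etale-correspondence-remark} and \cref{Ferrand-map-maps-to-bijections}, and then compute the pullback over the bijection orbit. Your write-up is in fact slightly more explicit than the paper's about why each algebra corresponds to the claimed $\fundgroup{R}$-set and why the result has exactly two elements.
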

\begin{proof}
Note that $\discalg{A}$ is defined so that the following square is a tensor product diagram:
\[
\begin{tikzcd}
\discalg{A} & R \arrow{l}\\
(A^{\otimes n})^{\A{n}} \arrow{u}& (A^{\otimes n})^{\S{n}} \arrow{u}\arrow{l}
\end{tikzcd}
\]
Since $A$ is \'etale, this square is in fact a cofibered product in the category of finite \'etale $R$-algebras. Hence under the contravariant equivalence $F\colon R\finetalgs\to\fundgroup{R}\fingsets$, this diagram becomes a fiber product diagram of $\fundgroup{R}$-sets:
\[
\begin{tikzcd}
 F(\discalg{A}) \arrow{r} \arrow{d}& \{\ast\} \arrow{d}\\
X^n/\A{n} \arrow{r}& X^n/\S{n}
\end{tikzcd}
\]
This pullback can be computed in the category of sets, and then equipped canonically with a $\fundgroup{R}$-action. 
By \cref{Ferrand-map-maps-to-bijections} the image of $*$ in $X^n/\S{n}$ is the class of bijections $\Bij(\set{n},X)$. 
So the elements of $F(\discalg{A})$ are $\A{n}$-equivalence classes of maps $\set{n}\to X$, whose underlying $\S{n}$-equivalence class is $\Bij(\set{n},X)$. 
In other words, this is the set of $\A{n}$-equivalence classes of bijections $\set{n}\simto X$, namely the two-element set $\Or(X)$.
\end{proof}

\section{Functoriality}
\label{section-functoriality}

Given an isomorphism of rank-$n$ $R$-algebras $f\colon A\to B$, we obtain isomorphisms $\fixpower{f}{n}{\A{n}}\colon\fixpower{A}{n}{\A{n}}\to \fixpower{B}{n}{\A{n}}$ and $\fixpower{f}{n}{\S{n}}\colon\fixpower{A}{n}{\S{n}}\to \fixpower{B}{n}{\S{n}}$, and thus an isomorphism of tensor products $\discalg{f}\colon\discalg{A/R}\to\discalg{B/R}$.
The key here is that the following triangle of $R$-algebra homomorphisms commutes if $f$ is an isomorphism:
\begin{equation}
\label{ferrand-functorial}
\begin{tikzcd}[column sep=tiny]
   \fixpower{A}{n}{\S{n}} \arrow{rd}[swap]{\ds\ferrand_{A/R}}\arrow{rr}{\fixpower{f}{n}{\S{n}}} & & \fixpower{B}{n}{\S{n}} \arrow{ld}{\ds\ferrand_{B/R}}\\
   & R &
  \end{tikzcd}
\end{equation}

For a general homomorphism $f\colon A\to B$, if the triangle \eqref{ferrand-functorial} commutes, then we obtain a homomorphism of discriminant algebras $\discalg{f}\colon\discalg{A}\to\discalg{B}$.

\begin{proposition}\label{norm-preserving}
 Let $R$ be a ring, and $A$ and $B$ two $R$-algebras of rank $n$.
 Let $f\colon A\to B$ be an $R$-algebra homomorphism, and let $\Omega\subset A$ be a set of elements of $A$ whose powers generate $A$ as an $R$-module.
 Then the following are equivalent:
 \begin{enumerate}[label=(\roman*)]
  \item The triangle \eqref{ferrand-functorial} of $R$-algebra homomorphisms commutes.
  \item For all $a\in A$, we have $\norm_{A/R}(a) = \norm_{B/R}(f(a))$, and the same holds after base change to any $R$-algebra $R'$.
  \item For all $a\in \Omega$, the characteristic polynomial of $a$ equals the characteristic polynomial of $f(a)$.
 \end{enumerate}
\end{proposition}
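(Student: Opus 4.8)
The plan is to prove the cycle of implications (i) $\Rightarrow$ (ii) $\Rightarrow$ (iii) $\Rightarrow$ (i). For (i) $\Rightarrow$ (ii), the first point is that commutativity of the triangle \eqref{ferrand-functorial} is inherited by base change: for any $R$-algebra $R'$, the Ferrand homomorphisms commute with base change (\cref{definition-ferrand}) and the functor $\fixpower{(\cdot)}{n}{\S{n}}$ commutes with base change by \cref{fixpower-commutes-with-base-change}, so applying $R'\otimes_R(-)$ to \eqref{ferrand-functorial} produces the analogous triangle for $A' = R'\otimes_R A$, $B' = R'\otimes_R B$, and $f' = \id_{R'}\otimes f$ over $R'$. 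It therefore suffices to deduce the plain equality $\norm_{A/R}(a) = \norm_{B/R}(f(a))$, for all $a\in A$, from commutativity over $R$; the base-changed statements then follow by repeating the argument with $(R,A,B,f)$ replaced by $(R',A',B',f')$. To obtain that equality, evaluate the triangle at the symmetric tensor $a\otimes\dots\otimes a\in\fixpower{A}{n}{\S{n}}$: the map $\ferrand_{A/R}$ sends it to $\norm_{A/R}(a)$ by the defining property of the Ferrand homomorphism, while $\fixpower{f}{n}{\S{n}}$ sends it to $f(a)\otimes\dots\otimes f(a)$, which $\ferrand_{B/R}$ then sends to $\norm_{B/R}(f(a))$; commutativity gives the claim.

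For (ii) $\Rightarrow$ (iii), I would specialize to $R' = R[\lambda]$, under which $f$ base changes to the homomorphism $A[\lambda]\to B[\lambda]$ sending $\lambda - a$ to $\lambda - f(a)$. Applying (ii) to the element $\lambda - a$ of $A[\lambda]$ yields $\norm_{A[\lambda]/R[\lambda]}(\lambda - a) = \norm_{B[\lambda]/R[\lambda]}(\lambda - f(a))$, which is exactly the statement that the characteristic polynomial of $a$ equals that of $f(a)$. This holds for every $a\in A$, hence in particular for every $a\in\Omega$.

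The substantive step is (iii) $\Rightarrow$ (i). Here I would invoke \cref{ek-generators}: the ring $\fixpower{A}{n}{\S{n}}$ is generated as an $R$-algebra by the elements $e_k(a)$ with $k\in\set{n}$ and $a\in\Omega$, and $\ferrand_{A/R}$ is the unique $R$-algebra homomorphism out of $\fixpower{A}{n}{\S{n}}$ sending each $e_k(a)$ to $s_k(a)$. Since $\ferrand_{B/R}\circ\fixpower{f}{n}{\S{n}}$ is again an $R$-algebra homomorphism $\fixpower{A}{n}{\S{n}}\to R$, to show it coincides with $\ferrand_{A/R}$---i.e., that \eqref{ferrand-functorial} commutes---it is enough to check that it, too, sends $e_k(a)$ to $s_k(a)$ for every $a\in\Omega$ and $k\in\set{n}$. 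Now $\fixpower{f}{n}{\S{n}}$ is the restriction of $f^{\otimes n}$, which carries the $i$th conjugate $\conjugate{a}{i}$ to $\conjugate{f(a)}{i}$, so it carries $e_k(a)$, the $k$th elementary symmetric polynomial in $\conjugate{a}{1},\dots,\conjugate{a}{n}$, to $e_k(f(a))$; hence $\ferrand_{B/R}\bigl(\fixpower{f}{n}{\S{n}}(e_k(a))\bigr) = \ferrand_{B/R}(e_k(f(a))) = s_k(f(a))$ by \eqref{ek-sent-to-sk}. By (iii) the characteristic polynomials of $a$ and $f(a)$ coincide for $a\in\Omega$, so their coefficients satisfy $s_k(a) = s_k(f(a))$, which finishes the cycle. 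The only real obstacle is locating the correct generating set: once \cref{ek-generators} reduces the identity of two $R$-algebra homomorphisms on all of $\fixpower{A}{n}{\S{n}}$ to a finite check on the $e_k(a)$ with $a\in\Omega$, everything else is formal bookkeeping with base change and the relation $\ferrand_{A/R}(e_k(a)) = s_k(a)$.
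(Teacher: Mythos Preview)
Your proof is correct and follows essentially the same route as the paper: the cycle (i) $\Rightarrow$ (ii) $\Rightarrow$ (iii) $\Rightarrow$ (i), with the key step (iii) $\Rightarrow$ (i) reduced via \cref{ek-generators} to checking that $\ferrand_{B/R}\circ\fixpower{f}{n}{\S{n}}$ sends $e_k(a)\mapsto s_k(f(a))=s_k(a)$ for $a\in\Omega$. Your treatment of (i) $\Rightarrow$ (ii) spells out the base-change compatibility a bit more explicitly than the paper does, but the argument is the same.
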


\begin{proof}
 (i $\Rightarrow$ ii) After base changing to $R'$, we obtain a commuting triangle
 \[\begin{tikzcd}[column sep=tiny]
   \fixpower[R']{A'}{n}{\S{n}} \arrow{rd}[swap]{\ds\ferrand_{A'/R'}}\arrow{rr}{\fixpower{f'}{n}{\S{n}}} & & \fixpower[R']{B'}{n}{\S{n}} \arrow{ld}{\ds\ferrand_{B'/R'}}\\
   & R' &
  \end{tikzcd}\]
  One path sends $a\otimes\dots\otimes a$ to $\norm_{A'/R'}(a)$, and other sends it to $\norm_{B'/R'}(f'(a))$, so these must be equal.
  
 (ii $\Rightarrow$ iii) Given any $a\in A$, we have from (ii) applied to $R'=R[\lambda]$ the equation $\norm_{A[\lambda]/R[\lambda]}(\lambda-a) = \norm_{B[\lambda]/R[\lambda]}(\lambda-f(a))$, but these are exactly the characteristic polynomials of $a$ and $f(a)$.
 
 (iii $\Rightarrow$ i) The composite $\ferrand_{B/R}\circ\fixpower{f}{n}{\S{n}}$ sends $e_k(a)\mapsto e_k(f(a))\mapsto s_k(f(a)) = s_k(a)$ for each $a\in\Omega$, and therefore equals $\ferrand_{A/R}$ by \cref{ek-generators}.
\end{proof}

We call an $R$-algebra homomorphism \emph{universally norm-preserving} if it satisfies the equivalent conditions of \cref{norm-preserving}.
Isomorphisms are always universally norm-preserving, but the following example shows that universally norm-preserving homomorphisms are in general neither injective nor surjective.

\begin{example}
 Let $A$ be any rank-$n$ $R$-algebra and $a\in A$ any element, with characteristic polynomial $p_a(\lambda) = \norm_{A[\lambda]/R[\lambda]}(\lambda-a)$.
 Then the $R$-algebra homomorphism $R[x]/(p_a(x))\to A$ sending $x$ to $a$ is universally norm-preserving, since $\Omega = \{x\}\subset R[x]/(p_a(x))$ is a set whose powers $\{1,x,x^2,\dots\}$ generate $R[x]/(p_a(x))$ as an $R$-module, and the characteristic polynomial of $x$ in $R[x]/(p_a(x))$ is again $p_a(\lambda)$.
 
 For example, let $r$ and $s$ be two elements of any ring $R$.
 Then we have a universally norm-preserving homomorphism $R[x]/(x-r)(x-s)\to R^2$ sending $x\mapsto (r,s)$.
 This map is injective if and only if $r-s$ is not a zerodivisor in $R$, and surjective if and only if $r-s$ is a unit.
 
 Note that there are also well-defined homomorphisms $R[x]/(x-r)(x-s)\to R^2$ sending $x$ to $(r,r)$ or $(s,s)$, but these are not universally norm-preserving unless $r=s$.
\end{example}

\begin{remark}
 The $\S{n}$-closure operation defined by Bhargava and Satriano in \cite{BhargSat} has a universal property related to universally norm-preserving homomorphisms.
 Namely, a \emph{full set of sections} for a rank-$n$ algebra $A$ is a family of $n$ algebra homomorphisms $A\to R$ such that the resulting homomorphism of rank-$n$ algebras $A\to R^n$ is universally norm-preserving, and the $\S{n}$-closure of $A$ over $R$ is the universal $R$-algebra over which $A$ is equipped with a full set of sections.
\end{remark}

%
%
%

\section{The discriminant algebra of a product}\label{section-products}

If $A$ and $B$ are $R$-algebras of ranks $m$ and $n$, respectively, then their product $A\times B$ is an $R$-algebra of rank $m+n$. 
It is well-known that $\extpower^{m+n}(A\times B)$ is canonically isomorphic to $\extpower^m A \otimes \extpower^n B$, and that if we identify these two locally-free $R$-modules we find that the discriminant quadratic form $\disc{A\times B}$ is just $\disc{A}\otimes\disc{B}$.
In this sense, the construction of the discriminant form is multiplicative.
We extend this multiplicativity to the construction of the discriminant algebra $\discalg{A\times B}$.

Namely, for each ring $R$ there is a commutative monoid structure $\ast_R$ on $\Quad_R$, the set of isomorphism classes of quadratic $R$-algebras; 
these monoid structures have recently been characterized by Voight in \cite{Voight15} as the unique family of functions $\ast_R\colon \Quad_R\times\Quad_R\to\Quad_R$ such that
\begin{itemize}
 \item For each $R$, the set $\Quad_R$ is a commutative monoid with multiplication $\ast_R$ and unit the class of $R^2$,
 \item For each $R$-algebra $R'$, the base-change operation $\Quad_R\to\Quad_{R'}$ is a homomorphism of commutative monoids, and
 \item If $S$ and $T$ are quadratic \'etale $R$-algebras with standard involutions $\sigma$ and $\tau$, then $S\ast_R T$ is the class of the subring of $S\otimes T$ fixed by $\sigma\otimes\tau$.
\end{itemize}
In \cite{DeligneLett}, Deligne asserted the existence of such a binary operation over which the discriminant algebra should distribute; 
an explicit construction of this binary operation on quadratic algebras is due to Loos and can be found in \cite{LoosTensorProd}.

Our goal in this section is to show that $\discalg{}$ is multiplicative in the sense that $\discalg{A\times B}\cong \discalg{A}\ast \discalg{B}$.
Our approach will be to show first that the operation on quadratic algebras $(S,T)\mapsto \discalg{S\times T}$ satisfies the three properties of the operations $\ast_R$ listed above, and then to exhibit an isomorphism $\discalg{A\times B}\cong \discalg{\discalg{A}\times\discalg{B}}$.
Then we will have
\[\discalg{A\times B}\cong \discalg{\discalg{A}\times\discalg{B}}\cong \discalg{A}\ast\discalg{B}.\]

\begin{remark}
The easiest properties to check are base-change and commutativity, and the description in the \'etale case is also straightforward to verify.
Indeed, if $S$ and $T$ are quadratic $R$-algebras, then $S\times T\cong T\times S$ so by functoriality under isomorphisms we find that
\[\discalg{S\times T}\cong \discalg{T\times S}.\] 
Furthermore, if $R'$ is any $R$-algebra, then
\[R'\otimes_R \discalg{(S\times T)/R} \cong \discalg{R'\otimes (S\times T)/R'} \cong \discalg{(S'\times T')/R'},\]
so the operation sending $(S,T)$ to the isomorphism class of $\discalg{S\times T}$ commutes with base change.

For the \'etale case, it is sufficient to check in case the base ring $R$ is connected.
(Indeed, in the proof of uniqueness in \cite{Voight15} the \'etale description is only used in a single universal case for which the base ring is even a domain.)
Then if we suppose $S$ and $T$ to correspond to $\fundgroup{R}$-sets $X$ and $Y$, to check that $(S\otimes T)^{\S{2}}$ is isomorphic to $\discalg{S\times T}$ we need only find an isomorphism of $\fundgroup{R}$-sets
\[(X\times Y)/\S{2} \cong \Bij\bigl(\set{4},X\sqcup Y\bigr)/\A{4}.\]
To wit, we have isomorphisms $X\cong\Bij(\{1,2\},X)$ and $Y\cong\Bij(\{3,4\},Y)$, and an inclusion
\[\Bij(\{1,2\},X)\times\Bij(\{3,4\},Y)\into \Bij\bigl(\set{4},X\sqcup Y\bigr).\]
It is easy to check that after projecting to $\Bij\bigl(\set{4},X\sqcup Y\bigr)/\A{4}$, we obtain a well-defined bijection from $\bigl(\Bij(\{1,2\},X)\times\Bij(\{3,4\},Y)\bigr)/\S{2}$.
\end{remark}

Checking that the operation $(S,T)\mapsto \discalg{S\times T}$ is associative and has unit $R^2$, and that there is an isomorphism $\discalg{A\times B}\cong \discalg{\discalg{A}\times\discalg{B}}$ for general $A$ and $B$, will occupy the rest of this section.
To check these statements, we will have to dig into explicit computations with elements.

We use the following abuse of notation in the hope that it will prevent the reader from being buried by parentheses. 
In product algebras of the form $A\times B$, we will often need to refer to elements of the form $(a,0)$ or $(0,b)$. 
We will usually denote such elements simply by $a$ or $b$, thus implicitly identifying the rings $A$ and $B$ with their corresponding ideals $A\times 0$ and $0\times B$ in $A\times B$. 
Since each of $A$ and $B$ naturally contains an image of $R$, for each $r\in R$ we will write $r_A$ for $(r,0)$ and $r_B$ for $(0,r)$; 
thus the two idempotents $(1,0)$ and $(0,1)$ in $A\times B$ are $1_A$ and $1_B$, respectively, and their sum is the unit $1$.

\begin{remark}\label{extpower-product-R}
For example, in this notation the isomorphism 
\[\extpower^m A\otimes \extpower^n B\cong \extpower^{(m+n)}(A\times B)\] is the one sending $(a_1\wedge\dots\wedge a_m)\otimes (b_1\wedge\dots\wedge b_{n})\mapsto a_1\wedge\dots\wedge a_m\wedge b_1\wedge\dots\wedge b_n$ for all $a_1,\dots,a_m\in A$ and $b_1,\dots,b_n\in B$ (see \cite[Prop.\ 10 on p.\ III.84]{BourbakiAlg13}).
\end{remark}
\begin{remark}
\label{delta-product-generators}
As a consequence, $\discalg{A\times B}$ is generated as an $R$-module by $1$ together with elements of the form $\dot\gamma^{\A{m+n}}(a_1,\dots,a_m,b_1,\dots,b_n)$ with $a_1,\dots,a_m\in A$ and $b_1,\dots,b_n\in B$ by \cref{discalg-generators}.
We will usually write such an element as $\dot\gamma^{\A{m+n}}(a,b)$, where $a=(a_1,\dots,a_m)\in A^m$ and $b=(b_1,\dots,b_n)\in B^n$.
\end{remark}

We will also often use the following description of the Ferrand homomorphism for $A\times B$:

\begin{lemma}\label{product-properties}
 Let $R$ be a ring, and let $A$ and $B$ be $R$-algebras of ranks $m$ and $n$, respectively.  
 Identify $\S{m}$ and $\S{n}$ with subgroups of $\S{m+n}$ permuting the first $m$ and last $n$ elements of $\set{m+n}$, respectively. 
 \begin{enumerate}[ref=\cref{product-properties}(\arabic*)]
 \item \label{product-algebra-invariants}
 For each subgroup $G\subset \S{m+n}$, the $R$-algebra projection
\[
\pi\colon(A\times B)^{\otimes m+n}\to A^{\otimes m}\otimes B^{\otimes n}
\]
sending $(a_1,b_1)\otimes\dots\otimes (a_{m+n},b_{m+n})$ to $(a_1\otimes\dots \otimes a_m)\otimes (b_{m+1}\otimes\dots\otimes b_{m+n})$ restricts to an $R$-algebra homomorphism
\[
\fixpower{(A\times B)}{(m+n)}{G}\to \fixpower{A}{m}{G\cap \S{m}} 
\otimes \fixpower{B}{n}{G\cap\S{n}}.
\]
 \item \label{lemma-ferrand-product}
 In case $G=\S{m+n}$, the resulting composite
 \[
 \begin{tikzcd}
 \fixpower{(A\times B)}{(m+n)}{\S{m+n}}\to \fixpower{A}{m}{\S{m}} 
\otimes \fixpower{B}{n}{\S{n}}\arrow{r}{\ferrand_{A}\otimes\ferrand_{B}} & R\otimes R\cong R
 \end{tikzcd}
 \]
 is the Ferrand homomorphism $\ferrand_{A\times B}$.
 \end{enumerate}
\end{lemma}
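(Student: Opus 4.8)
The plan is to treat the two parts separately: part~(1) by recognizing $\pi$ as a tensor product of $R$-algebra maps and then checking equivariance, and part~(2) by evaluating the resulting composite on a generating set and appealing to the uniqueness built into the Ferrand homomorphism.

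For part~(1), I would first note that $\pi$ is the tensor product of $m$ copies of the ring projection $A\times B\to A$ (acting on the first $m$ tensor factors) with $n$ copies of $A\times B\to B$ (acting on the last $n$), hence an $R$-algebra homomorphism on the full tensor powers. Next, for a permutation $\sigma\in G\cap\S{m}$ the map $\pi$ intertwines the action of $\sigma$ on $(A\times B)^{\otimes(m+n)}$ permuting the first $m$ slots with the action of $\sigma$ on $A^{\otimes m}$ (the factor $B^{\otimes n}$ being untouched), and symmetrically for $\sigma\in G\cap\S{n}$ on the last $n$ slots; since $G\cap\S{m}$ and $G\cap\S{n}$ are subgroups of $G$, it follows that $\pi$ carries $\fixpower{(A\times B)}{(m+n)}{G}$ into $(A^{\otimes m}\otimes_R B^{\otimes n})^{(G\cap\S{m})\times(G\cap\S{n})}$. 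Finally I would identify this last module with $\fixpower{A}{m}{G\cap\S{m}}\otimes_R\fixpower{B}{n}{G\cap\S{n}}$ by peeling off the invariants of one tensor factor at a time: taking invariants under a finite group is a finite limit, and the modules in play are locally free by \cref{generators-for-G-invariants}, hence flat, so tensoring against them preserves such limits.

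For part~(2) I would take $G=\S{m+n}$, so that $\S{m+n}\cap\S{m}=\S{m}$ and $\S{m+n}\cap\S{n}=\S{n}$, and obtain from part~(1) an $R$-algebra homomorphism $(\ferrand_A\otimes\ferrand_B)\circ\pi$ from $\fixpower{(A\times B)}{(m+n)}{\S{m+n}}$ to $R$. To see it equals $\ferrand_{A\times B}$, I would base-change to $R[\lambda]$ and evaluate on $(\lambda+c)^{\otimes(m+n)}$ for an arbitrary $c=(a,b)\in A\times B$: here $\pi$ produces $(\lambda+a)^{\otimes m}\otimes(\lambda+b)^{\otimes n}$, which $\ferrand_A\otimes\ferrand_B$ then sends to $\norm_{A[\lambda]/R[\lambda]}(\lambda+a)\cdot\norm_{B[\lambda]/R[\lambda]}(\lambda+b)$. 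Since multiplication by $(a,b)$ on $A\times B$ is block diagonal, its top exterior power is multiplication by $\norm_{A/R}(a)\,\norm_{B/R}(b)$ under the identification $\extpower^{m+n}(A\times B)\cong\extpower^mA\otimes\extpower^nB$ of \cref{extpower-product-R}; thus the above product is $\norm_{(A\times B)[\lambda]/R[\lambda]}(\lambda+c)$. Comparing coefficients of $\lambda^k$ shows the composite sends $e_k(c)$ to $s_k(c)$ for all $c\in A\times B$ and all $k$, so it coincides with $\ferrand_{A\times B}$ by the uniqueness statement accompanying \cref{ek-generators}. (One could instead invoke the uniqueness clause of \cref{definition-ferrand} directly, using that $\pi$ and $\ferrand_A\otimes\ferrand_B$ commute with base change.)

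I expect the only genuinely load-bearing point to be the identification in part~(1) of the invariants of the tensor product with the tensor product of invariants, which is what makes the target of the restricted $\pi$ literally a tensor product of invariant subalgebras rather than merely an invariant subalgebra of a tensor product; it is standard for projective modules but does rely on flatness. Everything else---the equivariance of $\pi$, the block-diagonal computation of the norm over a product, and the coefficient comparison---is routine bookkeeping, the main risk being only keeping straight which symmetric group acts on which block of tensor factors and matching the base-change compatibility to the precise diagram \eqref{ferrand-base-change}.
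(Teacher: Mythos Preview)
Your proposal is correct and follows essentially the same route as the paper's proof: for part~(1) both arguments pass through the inclusion of $G$-invariants into $(G\cap\S{m})\times(G\cap\S{n})$-invariants, use the $\S{m}\times\S{n}$-equivariance of $\pi$, and then identify $(A^{\otimes m}\otimes B^{\otimes n})^{(G\cap\S{m})\times(G\cap\S{n})}$ with the tensor product of invariant subalgebras via flatness/local freeness. The only cosmetic difference is in part~(2): the paper verifies the defining property of \cref{definition-ferrand} directly by sending $(a,b)^{\otimes(m+n)}$ to $\norm_A(a)\norm_B(b)=\norm_{A\times B}(a,b)$ and noting stability under base change, whereas you pass through $R[\lambda]$ and the $e_k\mapsto s_k$ criterion of \cref{ek-generators}; since you already flag the direct route in your parenthetical, this is a matter of taste rather than substance.
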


\begin{proof}
(1) Set $H=G\cap \S{m}$ and $K=G\cap \S{n}$. 
Then $H\times K$ is naturally a subgroup of $G$, and we have an inclusion
\[
\fixpower{(A\times B)}{(m+n)}{G}\into\fixpower{(A\times B)}{(m+n)}{H\times K}.
\]
Thus it is enough to show that $\pi\colon(A\times B)^{\otimes(m+n)}\to A^{\otimes m}\otimes B^{\otimes n}$ restricts to a homomorphism $\fixpower{(A\times B)}{(m+n)}{H\times K}\to \fixpower{A}{m}{H}\otimes\fixpower{B}{n}{K}$. 
Now $\S{m}\times\S{n}$ acts on both $(A\times B)^{\otimes m+n}$ and $A^{\otimes m}\otimes B^{\otimes n}$ by permuting the first $m$ and last $n$ tensor factors separately, and the projection $\pi$ is equivariant with respect to this action. 
Thus we obtain a map of $H\times K$-invariants
\[
\fixpower{(A\times B)}{(m+n)}{H\times K}\to \left(A^{\otimes m}\otimes 
B^{\otimes n}\right)^{H\times K}.
\]
Finally, observe that $\left(A^{\otimes m}\otimes B^{\otimes n}\right)^{H\times K}=\fixpower{A}{m}{H}\otimes\fixpower{B}{n}{K}$ as subalgebras of $A^{\otimes m}\otimes B^{\otimes n}$. 
(By \cref{fixpower-commutes-with-base-change}, this can be checked after changing base to a localization in which both $A$ and $B$ are free $R$-modules, in which case the isomorphism is elementary.) 
Thus we have obtained the desired homomorphism as the composite
\[
\fixpower{(A\times B)}{(m+n)}{G}\into\fixpower{(A\times B)}{(m+n)}{H\times 
K}\to\fixpower{A}{m}{H}\otimes\fixpower{B}{n}{K}.
\]

(2) We check the defining property of $\ferrand_{A\times B}$.
Let $(a,b)$ be an arbitrary element of $A\times B$; is $(a,b)\otimes\dots\otimes (a,b)$ sent to $\norm_{A\times B}(a,b) = \norm_A(a)\norm_B(b)$?
Indeed so: we have
\begin{align*}
(a,b)\otimes\dots\otimes(a,b) &\mapsto (a\otimes\dots\otimes a)\otimes(b\otimes\dots\otimes b) \\
&\mapsto (\norm_A(a))\otimes(\norm_B(b))\\
&\mapsto \norm_A(a)\norm_B(b)
\end{align*}
as desired.
The same property holds also after base change, so the indicated homomorphism is $\ferrand_{A\times B}$.
\end{proof}

Our first application is to check that $R^2$ is a unit for the operation $(S,T)\mapsto \discalg{S\times T}$.
More generally, if $A$ is any rank-$n$ $R$-algebra, then we have the following $R$-algebra isomorphism $\discalg{R\times A}\cong\discalg{A}$:

%
%
\begin{theorem}\label{base-factor}
Let $R$ be a ring, and let $A$ be an $R$-algebra of rank $n\geq 2$. 
The $R$-algebra homomorphism $(R\times A)^{\otimes (n+1)}\to A^{\otimes n}$ sending \[(r_0,a_0)\otimes(r_1,a_1)\otimes\dots\otimes(r_n,a_n) \mapsto r_0\cdot (a_1\otimes\dots\otimes a_n)\] restricts to an $R$-algebra homomorphism $\fixpower{(R\times A)}{(n+1)}{\A{n+1}}\to \fixpower{A}{n}{\A{n}}$ that descends to an $R$-algebra isomorphism $\discalg{R\times A} \to \discalg{A}$.
\end{theorem}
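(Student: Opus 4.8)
The plan is to recognise the displayed map as the projection of $(R\times A)^{\otimes(n+1)}$ onto one factor of its decomposition as a product of $R$-algebras, to restrict this projection to invariants using \cref{product-properties}, to check that it is compatible with the two Ferrand homomorphisms so that it descends to a homomorphism $\discalg{R\times A}\to\discalg{A}$, and finally to identify this homomorphism with an isomorphism by comparing the short exact sequences of \cref{exact-sequence}.

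Concretely, I would first regard $R\times A$ as a product of $R$-algebras of ranks $1$ and $n$, and identify $\S{n}$ with the subgroup of $\S{n+1}$ fixing the first tensor slot; since adjoining a fixed point does not change the sign of a permutation, $\A{n+1}\cap\S{n}=\A{n}$. The map in the statement is then exactly the projection $\pi$ of \cref{product-algebra-invariants} (with $m=1$), so that lemma, applied with $G=\A{n+1}$ and with $G=\S{n+1}$ and using that $\S{1}$ is trivial, shows that $\pi$ restricts to $R$-algebra homomorphisms $\fixpower{(R\times A)}{(n+1)}{\A{n+1}}\to\fixpower{A}{n}{\A{n}}$ and $\fixpower{(R\times A)}{(n+1)}{\S{n+1}}\to\fixpower{A}{n}{\S{n}}$. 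By \cref{lemma-ferrand-product}, and since $\ferrand_R=\id_R$, the second of these followed by $\ferrand_A$ is precisely $\ferrand_{R\times A}$. Hence $\pi$ carries the tensor-product square defining $\discalg{R\times A}$ into the one defining $\discalg{A}$, and the universal property of the tensor product yields an $R$-algebra homomorphism $\phi\colon\discalg{R\times A}\to\discalg{A}$.

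To conclude, I would compute $\phi$ on $R$-module generators. Writing $1_R=(1,0)$ and identifying each $a\in A$ with $(0,a)$, a short calculation straight from the definition of $\gamma^{\A{n+1}}$ shows that $\pi$ kills every summand of $\gamma^{\A{n+1}}(1_R,a_1,\dots,a_n)$ except those indexed by permutations fixing the first slot, so that it equals $\gamma^{\A{n}}(a_1,\dots,a_n)$; thus $\phi(1)=1$ and $\phi\bigl(\dot\gamma^{\A{n+1}}(1_R,a_1,\dots,a_n)\bigr)=\dot\gamma^{\A{n}}(a_1,\dots,a_n)$. By \cref{discalg-generators} the elements $1$ together with the $\dot\gamma^{\A{n+1}}(1_R,a_1,\dots,a_n)$ generate $\discalg{R\times A}$ as an $R$-module, because the wedges $1_R\wedge a_1\wedge\dots\wedge a_n$ generate $\extpower^{n+1}(R\times A)$. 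Now place $\phi$ in a ladder between the short exact sequence of \cref{exact-sequence-description} for $R\times A$ and that for $A$, with the canonical isomorphism $\extpower^{n+1}(R\times A)\cong\extpower^n A$ of \cref{extpower-product-R} (using $\extpower^1R=R$) as the right-hand vertical map and the identity of $R$ on the left. The left square commutes since $\phi$ is an $R$-algebra homomorphism, and the right square commutes because on the generator $\dot\gamma^{\A{n+1}}(1_R,a_1,\dots,a_n)$ both routes produce $a_1\wedge\dots\wedge a_n$ (and both send $1$ to $0$). The five lemma then shows $\phi$ is an isomorphism.

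The only step that really needs care is the descent, i.e.\ checking that $\pi$ intertwines $\ferrand_{R\times A}$ with $\ferrand_A$; but this has been arranged in advance by \cref{lemma-ferrand-product} once one notes that $\ferrand_R=\id_R$ and that $\S{1}$ is trivial. The remaining ingredients — the identity $\A{n+1}\cap\S{n}=\A{n}$, the formula for $\phi$ on generators, and the commutativity of the ladder — are routine bookkeeping.
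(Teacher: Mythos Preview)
Your proposal is correct and follows essentially the same argument as the paper: both invoke \cref{product-algebra-invariants} with $G=\A{n+1}$ (using $\A{n+1}\cap\S{n}=\A{n}$), use \cref{lemma-ferrand-product} to descend to a map of discriminant algebras, and then verify it is an isomorphism via the Five Lemma applied to the ladder of short exact sequences from \cref{exact-sequence-description}, with the isomorphism $\extpower^{n+1}(R\times A)\cong\extpower^n A$ of \cref{extpower-product-R} on the right. Your explicit check that $\pi\bigl(\gamma^{\A{n+1}}(1_R,a_1,\dots,a_n)\bigr)=\gamma^{\A{n}}(a_1,\dots,a_n)$ is exactly the computation the paper uses to verify commutativity of the right-hand square.
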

\begin{proof}
The homomorphism $(R\times A)^{\otimes n+1}\to A^{\otimes n}\cong R^{\otimes 1}\otimes A^{\otimes n}$ is of the form considered in \cref{product-algebra-invariants}, with $R$ and $A$ in place of $A$ and $B$.
Using the subgroup $\A{n+1}\subset\S{n+1}$, whose intersection with $\S{n}$ is $\A{n}$, \cref{product-algebra-invariants} tells us that the homomorphism restricts to one
\[
\fixpower{(R\times A)}{(n+1)}{\A{n+1}}\to R\otimes \fixpower{A}{ n}{\A{n}}\cong\fixpower{A}{n}{\A{n}},
\]
as desired.  
In addition, we know from \cref{lemma-ferrand-product} that the further restriction $\fixpower{(R\times A)}{(n+1)}{\S{n+1}} \to \fixpower{A}{n}{\S{n}}$ commutes with the Ferrand homomorphisms to $R$; 
thus we obtain a morphism of tensor products
\[
\mathop{\fixpower{(R\times A)}{(n+1)}{\A{n+1}} \midotimes R}_{\qquad\qquad\qquad \fixpower{(R\times A)}{(n+1)}{\S{n+1}}}\to\mathop{\fixpower{A}{n}{\A{n}} \midotimes 
R}_{\qquad\quad \fixpower{A}{n}{\S{n}}},
\]
i.e.\ an $R$-algebra homomorphism $\discalg{R\times A}\to\discalg{A}$.

To show that this homomorphism is an isomorphism, we show that it fits into a map of short exact sequences
\[
\xymatrix{
0 \ar[r]&
R \ar[r] \ar@{=}[d]&
\discalg{R\times A} \ar[r] \ar[d]&
\extpower^{n+1}(R\times A) \ar[r] &
0 \\
0 \ar[r]&
R \ar[r]&
\discalg{A} \ar[r]&
\extpower^n A \ar[r]\ar[u]^{\sim}&
0
}
\]where the right-hand isomorphism is the one $\extpower^n A\cong \extpower^1R\otimes\extpower^n A\cong \extpower^{n+1} (R\times A)$ from \cref{extpower-product-R}. 
The left-hand square commutes because $\discalg{R\times A}\to\discalg{A}$ is an $R$-algebra homomorphism. 
To show that the right-hand square commutes, follow an element $\dot\gamma^{\A{n+1}}(1_R,a_1,\dots,a_n)$ of $\discalg{R\times A}$; 
such elements along with $1$ generate $\discalg{R\times A}$ by \cref{delta-product-generators}.
Its image in $\discalg{A}$ is $\dot\gamma^{\A{n}}(a_1,\dots,a_n)$, giving $a_1\wedge\dots\wedge a_n$ in $\extpower^n A$, thence $1_R\wedge a_1\wedge\dots\wedge a_n$ in $\extpower^{n+1}(R\times A)$.
But this corresponds exactly to the action of the homomorphism $\discalg{R\times A} \to \extpower^{n+1}(R\times A)$.
Thus the right-hand square commutes as well, so by the Five Lemma, the homomorphism $\discalg{R\times A}\to\discalg{A}$ is an isomorphism of $R$-algebras.
\end{proof}

\begin{remark}
Certain authors (such as Deligne in \cite{DeligneLett} and Loos in \cite{LoosDiscAlg}) construct a discriminant algebra for only even-rank or only odd-rank algebras, defining the discriminant algebra of an algebra $A$ whose rank is of the wrong parity to be that of $R\times A$. 
The fact that our construction is invariant under adding a factor of $R$ will be useful in future work comparing these different constructions.
\end{remark}

\begin{remark}
 Note that we thus have an isomorphism $\discalg{A}\cong\discalg{R^2\times A}$ for all $R$-algebras of rank at least $2$, but the right-hand side is also well-defined for $R$-algebras of rank $0$ or $1$, and is then isomorphic to $R^2$.
 In this way we can extend the domain of the discriminant algebra operation to all constant rank (and even locally constant rank) algebras.
\end{remark}

All that remains, then, to show that $\discalg{S\times T}\cong S\ast T$ is to check that the operation $(S,T)\mapsto \discalg{S\times T}$ is associative.
This will follow from our exhibiting an isomorphism $\discalg{A\times B}\cong \discalg{\discalg{A}\times\discalg{B}}$, for then
\[
\discalg{S\times\discalg{T\times U}}
\cong\discalg{\discalg{S}\times\discalg{T\times U}}
\cong\discalg{S\times(T\times U)}
\cong\discalg{(S\times T)\times U}
\cong\discalg{\discalg{S\times T}\times\discalg{U}}
\cong\discalg{\discalg{S\times T}\times U},
\]
where we have used the isomorphisms $S\cong\discalg{S}$ and $U\cong\discalg{U}$ from \cref{discriminant-quad}.
So without any more ado, here is the promised isomorphism:

\begin{theorem}\label{main-discalg-prod}
Let $R$ be a ring, and let $A$ and $B$ be $R$-algebras of ranks $m$ and $n$, respectively, with both $m$ and $n$ at least $2$. 
Then there is a unique $R$-algebra isomorphism $\discalg{A\times B}\simto \discalg{\discalg{A}\times\discalg{B}}$ that sends $1\mapsto 1$ and $\dot\gamma^{\A{m+n}}(a,b)$ to $\dot\gamma^{\A{4}}\bigl(1_{\discalg{A}},\dot\gamma^{\A{m}}(a),1_{\discalg{B}},\dot\gamma^{\A{n}}(b)\bigr)$ for each $a\in A^m$ and $b\in B^n$.
\end{theorem}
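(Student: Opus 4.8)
The uniqueness is immediate from \cref{delta-product-generators}: the elements $1$ together with the $\dot\gamma^{\A{m+n}}(a,b)$, for $a\in A^m$ and $b\in B^n$, generate $\discalg{A\times B}$ as an $R$-module, so an $R$-algebra homomorphism out of it is determined by its values on them. Hence I only need to produce such a homomorphism $\Phi$ and check it is bijective. Both sides of the asserted isomorphism commute with localization (by \cref{base-change}, applied to $A\times B$ and to $\discalg{A}\times\discalg{B}$), and the asserted formula mentions no basis, so local definitions of $\Phi$ will glue; I may therefore assume $A$ and $B$ are free over $R$, say with $R$-bases $\theta^{A}$ and $\theta^{B}$. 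Then \cref{discriminant-free}, used three times, shows that $\discalg{A\times B}$ is free of rank $2$ on $\{1,\xi\}$ with $\xi=\dot\gamma^{\A{m+n}}(\theta^{A},\theta^{B})$, that $\discalg{A}\times\discalg{B}$ is free of rank $4$ on $\{1_{\discalg{A}},\dot\gamma^{\A{m}}(\theta^{A}),1_{\discalg{B}},\dot\gamma^{\A{n}}(\theta^{B})\}$, and hence that $\discalg{\discalg{A}\times\discalg{B}}$ is free of rank $2$ on $\{1,\eta\}$ with $\eta=\dot\gamma^{\A{4}}\bigl(1_{\discalg{A}},\dot\gamma^{\A{m}}(\theta^{A}),1_{\discalg{B}},\dot\gamma^{\A{n}}(\theta^{B})\bigr)$. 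For a free quadratic algebra $R[t]/(t^{2}-pt+q)$, the rule $t\mapsto c$ extends to an $R$-algebra homomorphism into a given algebra exactly when $c^{2}-pc+q=0$ there; so $1\mapsto1$, $\xi\mapsto\eta$ defines an $R$-algebra homomorphism $\Phi$ --- automatically an isomorphism, as it matches the two bases --- precisely when $\trace(\xi)=\trace(\eta)$ and $\norm(\xi)=\norm(\eta)$. That $\Phi$ then sends a general $\dot\gamma^{\A{m+n}}(a,b)$ to $\dot\gamma^{\A{4}}(1_{\discalg{A}},\dot\gamma^{\A{m}}(a),1_{\discalg{B}},\dot\gamma^{\A{n}}(b))$ follows from the $R$-multilinearity of both sides in the entries of $a$ and $b$, using \cref{lemma-ferrand-product} and its instances for $\discalg{A}$, $\discalg{B}$, $\discalg{A}\times\discalg{B}$ to reconcile the scalar terms that appear when entries coincide.

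The trace identity is quick. By \cref{involution-on-Delta}, $\trace_{\discalg{A\times B}}(\xi)=\ferrand_{A\times B}\bigl(\gamma^{\S{m+n}}(\theta^{A},\theta^{B})\bigr)$; by \cref{lemma-ferrand-product} this equals $(\ferrand_{A}\otimes\ferrand_{B})$ applied to the image of $\gamma^{\S{m+n}}(\theta^{A},\theta^{B})$ under the projection of \cref{product-algebra-invariants}, which is $\gamma^{\S{m}}(\theta^{A})\otimes\gamma^{\S{n}}(\theta^{B})$; so $\trace_{\discalg{A\times B}}(\xi)=\trace_{\discalg{A}}\bigl(\dot\gamma^{\A{m}}(\theta^{A})\bigr)\cdot\trace_{\discalg{B}}\bigl(\dot\gamma^{\A{n}}(\theta^{B})\bigr)$. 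The identical computation with $\discalg{A}\times\discalg{B}$ in place of $A\times B$ (now with $2+2$ in place of $m+n$) gives $\trace_{\discalg{\discalg{A}\times\discalg{B}}}(\eta)$ the same value.

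The norm identity is the crux. Write $u=\dot\gamma^{\A{m}}(\theta^{A})$ and $w=\dot\gamma^{\A{n}}(\theta^{B})$. By \cref{involution-on-Delta}, $\norm_{\discalg{\discalg{A}\times\discalg{B}}}(\eta)$ equals $\dot\gamma^{\A{4}}(1_{\discalg{A}},u,1_{\discalg{B}},w)\cdot\dot\gamma^{\Abar{4}}(1_{\discalg{A}},u,1_{\discalg{B}},w)$; expanding this with \cref{multiplication-on-Delta}, only two summands survive --- because two of the four arguments lie in $\discalg{A}\times 0$ and the other two in $0\times\discalg{B}$, so a factor vanishes whenever a summand mixes them --- and \cref{lemma-ferrand-product} evaluates them to give
\[
\norm_{\discalg{\discalg{A}\times\discalg{B}}}(\eta)=\norm_{\discalg{A}}(u)\,\trace_{\discalg{B}}(w^{2})+\trace_{\discalg{A}}(u^{2})\,\norm_{\discalg{B}}(w).
\]
Applying the same procedure to $\norm_{\discalg{A\times B}}(\xi)=\dot\gamma^{\A{m+n}}(\theta^{A},\theta^{B})\cdot\dot\gamma^{\Abar{m+n}}(\theta^{A},\theta^{B})$ is longer but parallel: since $\theta^{A}_{i}\theta^{B}_{j}=0$ in $A\times B$, the surviving summands are exactly those indexed by permutations preserving the partition $\{1,\dots,m\}\sqcup\{m+1,\dots,m+n\}$, and \cref{multiplication-on-Delta} inside $\discalg{A}$ and $\discalg{B}$ together with \cref{lemma-ferrand-product} collapse them to the very same expression. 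Alternatively, one gets $4\norm(\xi)=4\norm(\eta)$ cheaply by comparing discriminant forms --- \cref{discriminants-identified} for $A\times B$ and for $\discalg{A}\times\discalg{B}$, together with $\disc{A\times B}=\disc{A}\otimes\disc{B}$ and $\disc{\discalg{A}\times\discalg{B}}=\disc{\discalg{A}}\otimes\disc{\discalg{B}}\cong\disc{A}\otimes\disc{B}$ --- and then removes the factor $4$ by base change from a $\Z$-torsion-free universal ring, the identity being polynomial in the structure constants of $A$ and $B$.

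With $\Phi$ constructed, its bijectivity is formal. It restricts to the identity on $R$ and fits into a morphism between the short exact sequences of \cref{exact-sequence} for $A\times B$ and for $\discalg{A}\times\discalg{B}$, the map on the quotients being the canonical composite
\[
\extpower^{m+n}(A\times B)\;\cong\;\extpower^{m}A\otimes\extpower^{n}B\;\cong\;\extpower^{2}\discalg{A}\otimes\extpower^{2}\discalg{B}\;\cong\;\extpower^{4}(\discalg{A}\times\discalg{B}),
\]
with the two outer isomorphisms from \cref{extpower-product-R} and the middle one from \cref{mainthm-identify-discriminants} applied to $A$ and to $B$; the pertinent square commutes because $\dot\gamma^{\A{m+n}}(a,b)$ maps to $a_{1}\wedge\cdots\wedge a_{m}\wedge b_{1}\wedge\cdots\wedge b_{n}$ while $\dot\gamma^{\A{4}}(1_{\discalg{A}},\dot\gamma^{\A{m}}(a),1_{\discalg{B}},\dot\gamma^{\A{n}}(b))$ maps to $1_{\discalg{A}}\wedge\dot\gamma^{\A{m}}(a)\wedge1_{\discalg{B}}\wedge\dot\gamma^{\A{n}}(b)$, and these correspond along the displayed chain. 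The Five Lemma then shows $\Phi$ is an isomorphism. The one genuine obstacle is the norm identity above --- equivalently, the multiplicativity of $\xi\mapsto\eta$ --- since it is the only place where the combinatorics of $\S{m+n}$ must be matched against that of $\S{4}$; everything else is routine bookkeeping or formal.
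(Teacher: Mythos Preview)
Your proof is correct and follows essentially the same route as the paper: localize to the free case, define the map on the free bases by $\xi\mapsto\eta$, verify it is an $R$-algebra homomorphism, and then check the general formula by multilinearity in the entries of $a$ and $b$ (the paper, like you, splits this last step into the repeated-entry case and the permuted-basis case). The only organizational differences are that you phrase multiplicativity as matching $\trace$ and $\norm$ rather than as $f(\xi^{2})=f(\xi)^{2}$---these are equivalent for a free quadratic algebra---and that you offer a pleasant alternative for the norm identity via the discriminant identification together with a torsion-free universal base; the closing Five Lemma paragraph is redundant, since you have already noted that $\Phi$ carries one free $R$-basis to the other.
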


The proof that this assignment describes a well-defined algebra isomorphism is long and unenlightening.
The authors hope that future work will reveal a simpler construction of this isomorphism that avoids the elementary slog to follow.
Indeed, for the proof we use the following lemma several times:

\begin{lemma}\label{gamma-product-identities}
Let $a=(a_1,\dots,a_m)\in A^m$ and $b=(b_1,\dots,b_n)\in B^n$.
Then
\[
\ferrand_{A\times B}\bigl(\gamma^{\S{m+n}}(a,b)\bigr) = \ferrand_{A}\bigl(\gamma^{\S{m}}(a)\bigr)\cdot \ferrand_{B}\bigl(\gamma^{\S{n}}(b)\bigr).
\]
If two of the $a_i$ are equal, then $\gamma^{\A{m+n}}(a,b)$ is $\S{m+n}$-invariant, and
\[
\ferrand_{A\times B}\bigl(\gamma^{\A{m+n}}(a,b)\bigr) = \ferrand_{A}\bigl(\gamma^{\A{m}}(a)\bigr)\cdot \ferrand_{B}\bigl(\gamma^{\S{n}}(b)\bigr).
\]
\end{lemma}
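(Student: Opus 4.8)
The plan is to reduce everything to \cref{lemma-ferrand-product}, which expresses $\ferrand_{A\times B}$ as the composite of the projection $\pi\colon (A\times B)^{\otimes(m+n)}\to A^{\otimes m}\otimes B^{\otimes n}$ of \cref{product-algebra-invariants} with $\ferrand_{A}\otimes\ferrand_{B}\colon \fixpower{A}{m}{\S{m}}\otimes\fixpower{B}{n}{\S{n}}\to R$ followed by $R\otimes R\cong R$. So it suffices to identify $\pi\bigl(\gamma^{\S{m+n}}(a,b)\bigr)$ and $\pi\bigl(\gamma^{\A{m+n}}(a,b)\bigr)$, where $(a,b)=(a_1,\dots,a_m,b_1,\dots,b_n)$ is regarded as an $(m+n)$-tuple in $A\times B$ via the identifications $a_i\leftrightarrow(a_i,0)$ and $b_j\leftrightarrow(0,b_j)$.

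For the first identity, I would expand $\gamma^{\S{m+n}}(a,b)=\sum_{\sigma\in\S{m+n}}c_{\sigma(1)}\otimes\dots\otimes c_{\sigma(m+n)}$ (with $c$ the tuple above) and observe that a summand survives $\pi$ only when $\sigma$ carries $\set{m}$ into itself, i.e.\ when $\sigma\in\S{m}\times\S{n}$: otherwise one of the first $m$ factors projects to $0$ in $A$, or one of the last $n$ factors projects to $0$ in $B$. Summing the surviving terms over $\sigma=(\sigma_1,\sigma_2)$ and factoring the resulting double sum gives $\pi\bigl(\gamma^{\S{m+n}}(a,b)\bigr)=\gamma^{\S{m}}(a)\otimes\gamma^{\S{n}}(b)$, and applying $\ferrand_{A}\otimes\ferrand_{B}$ yields $\ferrand_{A}\bigl(\gamma^{\S{m}}(a)\bigr)\cdot\ferrand_{B}\bigl(\gamma^{\S{n}}(b)\bigr)$.

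For the second identity, I would first record the invariance claim: if $a_i=a_j$ for some $i\neq j$, then swapping the $i$th and $j$th entries of the tuple $(a,b)$ leaves it unchanged, so $\gamma^{\Abar{m+n}}(a,b)=\gamma^{\A{m+n}}(a,b)$; since the tensor-factor action of an odd permutation interchanges $\gamma^{\A{m+n}}(a,b)$ and $\gamma^{\Abar{m+n}}(a,b)$ while even ones fix each, this common element is $\S{m+n}$-invariant, and the same argument inside $\S{m}$ gives $\gamma^{\A{m}}(a)=\gamma^{\Abar{m}}(a)$. Now the same projection computation as before applies: the summands of $\gamma^{\A{m+n}}(a,b)$ surviving $\pi$ are indexed by $\A{m+n}\cap(\S{m}\times\S{n})$, which consists exactly of the pairs $(\sigma_1,\sigma_2)$ with $\sgn(\sigma_1)=\sgn(\sigma_2)$; separating the even--even and odd--odd contributions gives
\[
\pi\bigl(\gamma^{\A{m+n}}(a,b)\bigr)=\gamma^{\A{m}}(a)\otimes\gamma^{\A{n}}(b)+\gamma^{\Abar{m}}(a)\otimes\gamma^{\Abar{n}}(b)=\gamma^{\A{m}}(a)\otimes\gamma^{\S{n}}(b),
\]
using $\gamma^{\A{m}}(a)=\gamma^{\Abar{m}}(a)$ in the last step. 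Applying $\ferrand_{A}\otimes\ferrand_{B}$ then produces $\ferrand_{A}\bigl(\gamma^{\A{m}}(a)\bigr)\cdot\ferrand_{B}\bigl(\gamma^{\S{n}}(b)\bigr)$, as claimed.

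There is no serious obstacle; the proof is essentially bookkeeping. The only points needing care are (i) correctly pinning down which permutations survive the projection $\pi$ in each case — all of $\S{m}\times\S{n}$ for the full symmetrization, and only the index-two "sign-balanced" subgroup for the alternating one — and (ii) the small transposition argument that forces $\gamma^{\A{m+n}}(a,b)$ to be $\S{m+n}$-invariant in the first place, so that it lies in the domain of $\ferrand_{A\times B}$.
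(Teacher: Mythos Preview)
Your proposal is correct and follows essentially the same route as the paper's proof: both reduce to \cref{lemma-ferrand-product}, identify the surviving summands under the projection $\pi$ as those indexed by $\S{m}\times\S{n}$ (respectively its sign-balanced index-two subgroup), and then collapse the alternating case using $\gamma^{\A{m}}(a)=\gamma^{\Abar{m}}(a)$. Your treatment of the $\S{m+n}$-invariance claim is slightly more explicit than the paper's, but the argument is the same.
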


\begin{proof}
 By \cref{lemma-ferrand-product}, the Ferrand homomorphism for $A\times B$ may be computed by first projecting to $\fixpower{A}{m}{\S{m}}\otimes\fixpower{B}{n}{\S{n}}$ and then applying $\ferrand_A\otimes\ferrand_B$.
 In our case, the terms of $\gamma^{\S{m+n}}(a,b)$ that survive after the projection are exactly those with the $a_i$ among the first $m$ tensor factors and the $b_i$ among the last $n$.
 Thus the image of $\gamma^{\S{m+n}}(a,b)$ in $\fixpower{A}{m}{\S{m}}\otimes\fixpower{B}{n}{\S{n}}$ is $\gamma^{\S{m}}(a)\otimes\gamma^{\S{n}}(b)$.
 Therefore
 \[
\ferrand_{A\times B}\bigl(\gamma^{\S{m+n}}(a,b)\bigr) = \ferrand_{A}\bigl(\gamma^{\S{m}}(a)\bigr)\cdot \ferrand_{B}\bigl(\gamma^{\S{n}}(b)\bigr)
\]
 as desired.
 
 If two of the $a_i$ are equal, then the image of $\gamma^{\A{m+n}}(a,b)$ after projecting to $A^{\otimes m}\otimes B^{\otimes n}$ is the sum $\gamma^{\A{m}}(a)\otimes \gamma^{\A{n}}(b) + \gamma^{\Abar{m}}(a)\otimes \gamma^{\Abar{n}}(b)$, which equals $\gamma^{\A{m}}(a)\otimes\gamma^{\S{n}}(b)$ since $\gamma^{\A{m}}(a) = \gamma^{\Abar{m}}(a)$. 
Thus we obtain
\[
\ferrand_{A\times B}\bigl(\gamma^{\A{m+n}}(a,b)\bigr) = \ferrand_{A}\bigl(\gamma^{\A{m}}(a)\bigr)\cdot \ferrand_{B}\bigl(\gamma^{\S{n}}(b)\bigr)
\]
as claimed.
\end{proof}
\begin{example}\label{gamma-product-identities-example}
As a special case, consider quadratic $R$-algebras $S$ and $T$ with $s\in S$ and $t\in T$. 
We have
\begin{equation}
\begin{aligned}
\ferrand_{S\times T}\bigl(\gamma^{\A{4}}(s,s,1_T,t)\bigr) &= \ferrand_{S}\bigl(\gamma^{\A{2}}(s,s)\bigr)\cdot \ferrand_{T}\bigl(\gamma^{\S{2}}(1,t)\bigr) \\
 &= \ferrand_{S}(s\otimes s) \cdot \ferrand_{T}(1\otimes t + t\otimes 1)\\
 &= \norm_{S}(s) \cdot \trace_{T}(t).
\end{aligned}
\end{equation}
Similarly, we have
\begin{equation}
\begin{aligned}
\ferrand_{S\times T}\bigl(\gamma^{\A{4}}(s,s,t,t)\bigr) &= \ferrand_{S}\bigl(\gamma^{\A{2}}(s,s)\bigr)\cdot \ferrand_{T}\bigl(\gamma^{\S{2}}(t,t)\bigr) \\
 &= \ferrand_{S}(s\otimes s) \cdot \ferrand_{T}(t\otimes t + t\otimes t)\\
 &= \norm_{S}(s) \cdot 2\norm_{T}(t)\\
 &= 2\norm_{S}(s)\norm_{T}(t).
\end{aligned}
\end{equation}
These two identities will come up again in the proof of \cref{main-discalg-prod}.
\end{example}

\begin{proof}[Proof of \cref{main-discalg-prod}]
For uniqueness, note that $\discalg{A\times B}$ is generated as an $R$-module by $1$ and elements of the form $\dot\gamma^{\A{m+n}} (a_1,\dots,a_m,b_1,\dots,b_n)$ by \cref{delta-product-generators}. 
We demonstrate the existence of such an isomorphism on each localization of $R$ under which $A$ and $B$ become free $R$-modules. 
By uniqueness, then, these isomorphisms on the localizations will glue to an $R$-algebra isomorphism between the two original discriminant algebras.
 
Thus it suffices to assume $A$ and $B$ are free, say with $R$-bases $\theta=(\theta_1,\dots,\theta_m)$ and $\phi=(\phi_1,\dots,\phi_n)$, respectively. 
In that case, $\discalg{A\times B}$ is freely generated as an $R$-module by $1$ and $\dot\gamma^{\A{m+n}} (\theta_1,\dots,\theta_m,\phi_1,\dots,\phi_n)$; 
we abbreviate the latter as $\dot\gamma^{\A{m+n}}(\theta,\phi)$.
Similarly, $\discalg{\discalg{A}\times\discalg{B}}$ is freely generated as an $R$-module by $1$ and $\dot\gamma^{\A{4}}\bigl(1_{\discalg{A}}, \dot\gamma^{\A{m}}(\theta),1_{\discalg{B}},\dot\gamma^{\A{n}} (\phi)\bigr)$. 
Then we can naively define an $R$-module isomorphism
\[
f \colon \discalg{A\times B}\to \discalg{\discalg{A}\times\discalg{B}}\\
\]
sending $1 \mapsto 1$ and $\dot\gamma^{\A{m+n}}(\theta,\phi) \mapsto \dot\gamma^{\A{4}}\bigl(1_{\discalg{A}}, \dot\gamma^{\A{m}}(\theta),1_{\discalg{B}},\dot\gamma^{\A{n}}(\phi)\bigr)$.
We claim that in fact
\[
f \colon \dot\gamma^{\A{m+n}}(a,b) \mapsto \dot\gamma^{\A{4}}\bigl(1_{\discalg{A}}, \dot\gamma^{\A{m}}(a),1_{\discalg{B}},\dot\gamma^{\A{n}}(b)\bigr)
\]
for all $a=(a_1,\dots,a_m)\in A^m$ and $b=(b_1,\dots,b_n)\in B^n$, so that $f$ acts elementwise as desired. 
Then we will show that $f$ is multiplicative, making it an $R$-algebra isomorphism.
 
Since the expressions $\dot\gamma^{\A{m+n}}(a,b)$ and $\dot\gamma^{\A{4}}\bigl(1_{\discalg{A}}, \dot\gamma^{\A{m}}(a),1_{\discalg{B}},\dot\gamma^{\A{n}}(b)\bigr)$ 
are each multilinear in the $a_i$ and $b_j$, we can reduce to proving the claim in case each of the $a_i$ and $b_j$ are among the basis elements for $A$ and $B$.

The first possibility is that two of the $a_i$ or two of the $b_j$ are equal. 
Without loss of generality, assume that the $a_i$ are not all distinct. 
Then $\gamma^{\A{m+n}}(a,b)$ is $\S{m+n}$-invariant and $\dot\gamma^{\A{m+n}}(a,b)$ is equal to $\ferrand_{A\times B}\bigl(\gamma^{\A{m+n}}(a,b)\bigr) = \ferrand_{A}\bigl(\gamma^{\A{m}}(a)\bigr)\ferrand_{B}\bigl(\gamma^{\S{n}}(b)\bigr)$ by \cref{gamma-product-identities}.

On the other hand, since $\gamma^{\A{m}}(a)\in\fixpower{A}{m}{\A{m}}$ is also $\S{m}$-invariant, we can also express $\dot\gamma^{\A{4}}\bigl(1_{\discalg{A}}, \dot\gamma^{\A{m}}(a),1_{\discalg{B}},\dot\gamma^{\A{n}}(b)\bigr)$ in a form amenable to the \cref{gamma-product-identities-example} identities:
\begin{align*}
\dot\gamma^{\A{4}}\bigl(1_{\discalg{A}}, \dot\gamma^{\A{m}}(a),1_{\discalg{B}},\dot\gamma^{\A{n}}(b)\bigr) &= \dot\gamma^{\A{4}}\Bigl(1_{\discalg{A}}, \ferrand_{A}\bigl(\gamma^{\A{m}}(a)\bigr)_{\discalg{A}},1_{\discalg{B}},\dot\gamma^{\A{n}}(b)\Bigr)\\
&= \ferrand_{A}\bigl(\gamma^{\A{m}}(a)\bigr) \dot\gamma^{\A{4}}\bigl(1_{\discalg{A}}, 1_{\discalg{A}},1_{\discalg{B}},\dot\gamma^{\A{n}}(b)\bigr)\\
&= \ferrand_{A}\bigl(\gamma^{\A{m}}(a)\bigr) \norm_{\discalg{A}}(1)\trace_{\discalg{B}}\bigl(\dot\gamma^{\A{n}}(b)\bigr)\\
&= \ferrand_{A}\bigl(\gamma^{\A{m}}(a)\bigr) \ferrand_{B}\bigl(\gamma^{\S{n}}(b)\bigr).
\end{align*}
Therefore in this case $f$ sends $\dot\gamma^{\A{m+n}}(a,b)$ to $\dot\gamma^{\A{4}}\bigl(1_{\discalg{A}}, \dot\gamma^{\A{m}}(a),1_{\discalg{B}},\dot\gamma^{\A{n}}(b)\bigr)$ 
because these two elements are the same $R$-multiple of $1$.

The second possibility is that the $a_i$ are a permutation of the $\theta_i$, and the $b_j$ are a permutation of the $\phi_j$. 
Write $a=\theta_\sigma$ and $b=\phi_\tau$ for appropriate permutations $\sigma\in\S{m}$ and $\tau\in\S{n}$, recalling the notation from \cref{multiplication-on-Delta} for the action of the symmetric group on the set of tuples.
We must show that
\[
f\colon\dot\gamma^{\A{m+n}}(\theta_\sigma,\phi_\tau)\mapsto \dot\gamma^{\A{4}}\bigl(1_{\discalg{A}}, \dot\gamma^{\A{m}}(\theta_\sigma),1_{\discalg{B}},\dot\gamma^{\A{n}}(\phi_\tau)\bigr).
\]
There are four cases, according to the signs of $\sigma$ and $\tau$, which determine the values of $\dot\gamma^{\A{m+n}}(\theta_\sigma,\phi_\tau)$, $\dot\gamma^{\A{m}}(\theta_\sigma)$, and $\dot\gamma^{\A{n}}(\phi_\tau)$. 
So we must show that the following four assignments hold:
\begin{align*}
f\colon\dot\gamma^{\A{m+n}}(\theta,\phi)&\mapsto \dot\gamma^{\A{4}}\bigl(1_{\discalg{A}}, \dot\gamma^{\A{m}}(\theta),1_{\discalg{B}},\dot\gamma^{\A{n}}(\phi)\bigr)\text{, from $\sigma$ and $\tau$ even,}\\
f\colon\dot\gamma^{\Abar{m+n}}(\theta,\phi)&\mapsto \dot\gamma^{\A{4}}\bigl(1_{\discalg{A}}, \dot\gamma^{\Abar{m}}(\theta),1_{\discalg{B}},\dot\gamma^{\A{n}}(\phi)\bigr)\text{, from $\sigma$ odd and $\tau$ even,}\\
f\colon\dot\gamma^{\A{m+n}}(\theta,\phi)&\mapsto \dot\gamma^{\A{4}}\bigl(1_{\discalg{A}}, \dot\gamma^{\Abar{m}}(\theta),1_{\discalg{B}},\dot\gamma^{\Abar{n}}(\phi)\bigr)\text{, from $\sigma$ and $\tau$ odd, and}\\
f\colon\dot\gamma^{\Abar{m+n}}(\theta,\phi)&\mapsto \dot\gamma^{\A{4}}\bigl(1_{\discalg{A}}, \dot\gamma^{\A{m}}(\theta),1_{\discalg{B}},\dot\gamma^{\Abar{n}}(\phi)\bigr)\text{, from $\sigma$ even and $\tau$ odd.}
\end{align*}

The first of the four assignments holds by the definition of $f$. 
As for the others, note that 
\begin{align*}  
\dot\gamma^{\A{m+n}}(\theta,\phi)+\dot\gamma^{\Abar{m+n}}(\theta,\phi)&= \dot\gamma^{\S{m+n}}(\theta,\phi)\\
&= \ferrand_{A\times B}\left(\gamma^{\S{m+n}}(\theta,\phi)\right)\\
&= \ferrand_{A}\left(\gamma^{\S{m}}(\theta)\right)\ferrand_{B}\left(\gamma^{\S{n}}(\phi)\right),
\end{align*}
so it is enough to show that the sum of each successive pair of outputs is $\ferrand_{A}\left(\gamma^{\S{m}}(\theta)\right)\ferrand_{B} \left(\gamma^{\S{n}}(\phi)\right)$. 
And this indeed holds: for example, 
\begin{gather*}
\dot\gamma^{\A{4}}\bigl(1_{\discalg{A}}, \dot\gamma^{\A{m}}(\theta),1_{\discalg{B}},\dot\gamma^{\A{n}}(\phi)\bigr)+\dot\gamma^{\A{4}}\bigl(1_{\discalg{A}}, \dot\gamma^{\Abar{m}}(\theta),1_{\discalg{B}},\dot\gamma^{\A{n}}(\phi)\bigr)\\
\begin{aligned}
&= \dot\gamma^{\A{4}}\bigl(1_{\discalg{A}}, \dot\gamma^{\A{m}}(\theta)+\dot\gamma^{\Abar{m}}(\theta),1_{\discalg{B}},\dot\gamma^{\A{n}}(\phi)\bigr)\\
&= \dot\gamma^{\A{4}}\bigl(1_{\discalg{A}}, \dot\gamma^{\S{m}}(\theta),1_{\discalg{B}},\dot\gamma^{\A{n}}(\phi)\bigr)\\
&= \dot\gamma^{\A{4}}\left(1_{\discalg{A}}, \ferrand_{A}\left(\gamma^{\S{m}}(\theta)\right)_{\discalg{A}},1_{\discalg{B}},\dot\gamma^{\A{n}}(\phi)\right)\\
&= \ferrand_{A}\left(\gamma^{\S{m}}(\theta)\right)\dot\gamma^{\A{4}}\left(1_{\discalg{A}}, 1_{\discalg{A}},1_{\discalg{B}},\dot\gamma^{\A{n}}(\phi)\right)\\
&= \ferrand_{A}\left(\gamma^{\S{m}}(\theta)\right) \norm_{\discalg{A}}(1)\,\trace_{\discalg{B}}\left(\dot\gamma^{\A{n}}(\phi)\right)\\
&= \ferrand_{A}\left(\gamma^{\S{m}}(\theta)\right)\ferrand_{B}\left(\gamma^{\S{n}}(\phi)\right).
\end{aligned}
\end{gather*}
The other two sums can be evaluated similarly. So indeed, we have shown that $f$ must send $\dot\gamma^{\A{m+n}}(\theta_\sigma,\phi_\tau)$ to $\dot\gamma^{\A{4}}\bigl(1_{\discalg{A}}, \dot\gamma^{\A{m}}(\theta_\sigma),1_{\discalg{B}},\dot\gamma^{\A{n}}(\phi_\tau)\bigr)$, and established the claim that
\[
f \colon \dot\gamma^{\A{m+n}}(a,b) \mapsto \dot\gamma^{\A{4}}\bigl(1_{\discalg{A}}, \dot\gamma^{\A{m}}(a),1_{\discalg{B}},\dot\gamma^{\A{n}}(b)\bigr)
\]
for all $a\in A^m$ and $b\in B^n$.

Now we show that the $R$-module isomorphism $f\colon\discalg{A\times B} \to\discalg{\discalg{A}\times\discalg{B}}$ is in fact an $R$-algebra isomorphism. 
Since $f$ is $R$-linear and $\{1,\dot\gamma^{\A{m+n}}(\theta,\phi)\}$ forms an $R$-basis for $\discalg{A\times B}$, all that we need to check is whether 
\begin{align*}
f(1\cdot1) &= f(1)\cdot f(1),\\
f(1\cdot \dot\gamma^{\A{m+n}}(\theta,\phi)) &= f(1)\cdot f(\dot\gamma^{\A{m+n}}(\theta,\phi)),\text{ and }\\
f(\dot\gamma^{\A{m+n}}(\theta,\phi)\cdot\dot\gamma^{\A{m+n}}(\theta,\phi)) &= f(\dot\gamma^{\A{m+n}}(\theta,\phi))\cdot f(\dot\gamma^{\A{m+n}}(\theta,\phi)).
\end{align*}
The first two hold because $f(1)=1$ by definition, so all that is left to check is that $f\left(\dot\gamma^{\A{m+n}}(\theta,\phi)\right)^2=f\left(\dot\gamma^{\A{m+n}}(\theta,\phi)^2\right).$
Now we can use \cref{multiplication-on-Delta} to expand this product:
\begin{align*}
\dot\gamma^{\A{m+n}}(\theta,\phi)^2
 &= \sum_{\sigma\in\A{m+n}}\dot\gamma^{\A{m+n}}\bigl((\theta,\phi)(\theta,\phi)_\sigma\bigr)\\
 &= \sum_{\substack{(\sigma,\tau)\in\\
\mathclap{(\A{m}\times\A{n})\cup(\Abar{m}\times\Abar{n})}}} \dot\gamma^{\A{m+n}}(\theta\theta_{\sigma},\phi\phi_{\tau}),
\end{align*}
since $(\theta,\phi)(\theta,\phi)_\sigma$ has a zero entry unless $\sigma$ belongs to $\A{m+n}\cap(\S{m}\times \S{n}) = (\A{m}\times\A{n})\cup(\Abar{m}\times\Abar{n})$.
Therefore
\begin{align*}
\quad f\left(\dot\gamma^{\A{m+n}}(\theta,\phi)^2\right) &= \sum_{\substack{(\sigma,\tau)\in\\\mathclap{(\A{m}\times\A{n})\cup(\Abar{m}\times\Abar{n})}}}\dot\gamma^{\A{4}}\bigl(1_{\discalg{A}},\dot\gamma^{\A{m}}(\theta\theta_\sigma),1_{\discalg{B}},\dot\gamma^{\A{n}}(\phi\phi_\tau)\bigr)\\
&=  \dot\gamma^{\A{4}}\bigl(1_{\discalg{A}}, \dot\gamma^{\A{m}}(\theta)^2,1_{\discalg{B}},\dot\gamma^{\A{n}}(\phi)^2\bigr)\\
&\qquad+ \dot\gamma^{\A{4}}\bigl(1_{\discalg{A}}, \dot\gamma^{\A{m}}(\theta)\dot\gamma^{\Abar{m}}(\theta),1_{\discalg{B}},\dot\gamma^{\A{n}}(\phi)\dot\gamma^{\Abar{n}}(\phi)\bigr).
\end{align*}
On the other hand,
\begin{align*}
f\left(\dot\gamma^{\A{m+n}}(\theta,\phi)\right)^2 &=  \dot\gamma^{\A{4}}\bigl(1_{\discalg{A}}, \dot\gamma^{\A{m}}(\theta),1_{\discalg{B}},\dot\gamma^{\A{n}}(\phi)\bigr)^2\\
&= \dot\gamma^{\A{4}}\bigl(1_{\discalg{A}}, \dot\gamma^{\A{m}}(\theta)^2,1_{\discalg{B}},\dot\gamma^{\A{n}}(\phi)^2\bigr)\\
&\qquad+ \dot\gamma^{\A{4}}\bigl(\dot\gamma^{\A{m}}(\theta), \dot\gamma^{\A{m}}(\theta),\dot\gamma^{\A{n}}(\phi),\dot\gamma^{\A{n}}(\phi)\bigr),
\end{align*}
by a similar application of \cref{multiplication-on-Delta}. 
One of each of the two terms on the right-hand sides matches immediately, but the others are also equal:
\begin{gather*}
\dot\gamma^{\A{4}}\bigl(1_{\discalg{A}}, \dot\gamma^{\A{m}}(\theta)\dot\gamma^{\Abar{m}}(\theta),1_{\discalg{B}},\dot\gamma^{\A{n}}(\phi)\dot\gamma^{\Abar{n}}(\phi)\bigr)\\
\begin{aligned}
&= \dot\gamma^{\A{4}}\left(1_{\discalg{A}}, \norm_{\discalg{A}}\bigl(\dot\gamma^{\A{m}}(\theta)\bigr)_{\discalg{A}},1_{\discalg{B}},\norm_{\discalg{B}}\bigl(\dot\gamma^{\A{n}}(\phi)\bigr)_{\discalg{B}}\right)\\
&= \norm_{\discalg{A}}\bigl(\dot\gamma^{\A{m}}(\theta)\bigr)\,\norm_{\discalg{B}}\bigl(\dot\gamma^{\A{n}}(\phi)\bigr)\,\dot\gamma^{\A{4}}\left(1_{\discalg{A}}, 1_{\discalg{A}},1_{\discalg{B}},1_{\discalg{B}}\right)\\
&= 2\,\norm_{\discalg{A}}\bigl(\dot\gamma^{\A{m}}(\theta)\bigr)\,\norm_{\discalg{B}}\bigl(\dot\gamma^{\A{n}}(\phi)\bigr)\\
&=\dot\gamma^{\A{4}}\bigl(\dot\gamma^{\A{m}}(\theta), \dot\gamma^{\A{m}}(\theta),\dot\gamma^{\A{n}}(\phi),\dot\gamma^{\A{n}}(\phi)\bigr).
\end{aligned}
\end{gather*}
Thus $f$ is an $R$-algebra isomorphism $\discalg{A\times B}\to\discalg{\discalg{A}\times\discalg{B}}$, as desired.
\end{proof}

%
%
%
\begin{remark}
The isomorphism of \cref{main-discalg-prod} does not entirely commute with the isomorphisms interchanging $A$ and $B$.
While it is always the case that
\[
\dot\gamma^{\A{4}}\bigl(1_{\discalg{A}}, \dot\gamma^{\A{m}}(a_1,\dots,a_m), 1_{\discalg{B}}, \dot\gamma^{\A{n}}(b_1,\dots,b_n)\bigr)
\]
equals
\[
\dot\gamma^{\A{4}}\bigl(1_{\discalg{B}}, \dot\gamma^{\A{n}}(b_1,\dots,b_n), 1_{\discalg{A}}, \dot\gamma^{\A{m}}(a_1,\dots,a_m)\bigr),
\]
the elements $\dot\gamma^{\A{m+n}}(a_1,\dots,a_m,b_1,\dots,b_n)$ and $\dot\gamma^{\A{m+n}}(b_1,\dots,b_n,a_1,\dots,a_m)$ are generally only equal if $m$ or $n$ is even; 
otherwise they are conjugates. 
Thus the chain of isomorphisms
\[
\discalg{A\times B} \cong \discalg{\discalg{A}\times\discalg{B}} \cong\discalg{\discalg{B}\times\discalg{A}} \cong \discalg{B\times A} \cong \discalg{A\times B}
\]
is the identity if the rank of $A$ or $B$ is even, but is the standard 
involution on $\discalg{A\times B}$ if the ranks of $A$ and $B$ are both odd.
This is precisely the behavior described by Deligne at the end of 
\cite{DeligneLett}.
\end{remark}

\bibliographystyle{acm}
\bibliography{RefList}

\end{document}